\tikzstyle{vertex}=[circle,draw,inner sep=0pt, minimum size=6pt]
\numberwithin{equation}{section}
\newtheorem{Thm}[equation]{Theorem}
\newtheorem{Cor}[equation]{Corollary}
\newtheorem{Lem}[equation]{Lemma}
\newtheorem{Prop}[equation]{Proposition}
\theoremstyle{definition}
\theoremstyle{definition}\newtheorem{Def}[equation]{Definition}
\theoremstyle{definition}
\theoremstyle{definition}\newtheorem{Not}[equation]{Notation}
\theoremstyle{definition}
\newenvironment{myproof}[1][\proofname]{%
  \proof[\scshape Proof #1]%
}{\endproof}
\newcommand{\F}{\mathbb{F}}
\newcommand{\K}{\mathbb{K}}
\renewcommand{\le}{\leqslant}
\renewcommand{\ge}{\geqslant}
\newcommand{\lcm}{\text{lcm}}
\newcommand{\mcB}{\mathcal{B}}
\newcommand{\mcC}{\mathcal{C}}
\newcommand{\mcD}{\mathcal{D}}
\newcommand{\mcF}{\mathcal{F}}
\newcommand{\mcG}{\mathcal{G}}
\newcommand{\mcL}{\mathcal{L}}
\newcommand{\mcN}{\mathcal{N}}
\newcommand{\mcP}{\mathcal{P}}
\newcommand{\mcR}{\mathcal{R}}
\newcommand{\mcS}{\mathcal{S}}
\newcommand{\mcU}{\mathcal{U}}
\newcommand{\mcV}{\mathcal{V}}
\newcommand{\mfn}{\mathfrak{n}}
\newcommand{\mfN}{\mathfrak{N}}
\newcommand{\msC}{\mathscr{C}}
\newcommand{\row}{\text{row}}
\newcommand{\wGU}{\widehat{\mcG_U}}
\newcommand{\GL}{{\rm GL}}
\newcommand{\Gal}{\text{Gal}}
\newcommand{\Diag}{\text{Diag}}
\title[Null ideals of sets of $3 \times 3$ similar matrices]{Null ideals of sets of $3 \times 3$ similar matrices with irreducible characteristic polynomial}
\date{\today}
\author{Eric Swartz}
\address{Department of Mathematics, William \& Mary, P.O. Box 8795, Williamsburg, VA 23187-8795, USA}
\email{easwartz@wm.edu}
\author{Nicholas J. Werner}
\address{Department of Mathematics, Computer and Information Science, State University of New York College at Old Westbury, Old Westbury, NY 11560, USA}
\email{wernern@oldwestbury.edu}
\begin{document}

\begin{abstract}
Let $F$ be a field and $M_n(F)$ the ring of $n \times n$ matrices over $F$. Given a subset $S$ of $M_n(F)$, the null ideal of $S$ is the set of all polynomials $f$ with coefficients from $M_n(F)$ such that $f(A) = 0$ for all $A \in S$. We say that $S$ is core if the null ideal of $S$ is a two-sided ideal of the polynomial ring $M_n(F)[x]$. We study sufficient conditions under which $S$ is core in the case where $S$ consists of $3 \times 3$ matrices, all of which share the same irreducible characteristic polynomial. In particular, we show that if $F$ is finite with $q$ elements and $|S| \ge q^3-q^2+1$, then $S$ is core. As a byproduct of our work, we obtain some results on block Vandermonde matrices, invertible matrix commutators, and graphs defined via an invertible difference relation.\\

\noindent Keywords: Null ideal, Vandermonde matrix, matrix commutator\\
MSC: 16S50, 15A15, 15B33, 05C25
\end{abstract}

\maketitle 

\thispagestyle{empty}

\section{Introduction}\label{Intro}

The purpose of this paper is to investigate null ideals of sets of matrices. Let $M_n(F)$ be the ring of $n \times n$ matrices with entries from a field $F$, and let $\GL(n,F)$ be the group of invertible matrices in $M_n(F)$. Associate $F$ with the scalar matrices in $M_n(F)$, so that $F \subseteq M_n(F)$. For a prime power $q$, $\F_q$ denotes the field with $q$ elements and $\GL(n,q)$ is the group of invertible matrices in $M_n(\F_q)$. Unless otherwise specified, all vectors are row vectors, and $F^n = \{ \begin{bmatrix} x_1 \; \cdots \; x_n \end{bmatrix} : x_i \in F\}$ is the $F$-vector space of $n$-dimensional row vectors.

Given a subset $S \subseteq M_n(F)$, the \textit{null ideal} of $S$ is $N(S):= \{f \in M_n(F)[x] : f(A)=0 \text{ for all } A \in S\}$. Note that the polynomials in $N(S)$ have matrix coefficients. Since these coefficients come from a noncommutative ring, care must be taken when evaluating polynomials. We will adopt the convention that the indeterminate $x$ is central, and that polynomials in $M_n(F)[x]$ satisfy right evaluation, i.e., that before a polynomial can be evaluated, powers of $x$ must appear to the right of any coefficients. For example, a product $(Ax)(Bx)$ of monomials $Ax$ and $Bx$ is (as usual) equal to $ABx^2$, but it must be expressed in the latter form before it is evaluated at an element of $M_n(F)$. In particular, this means that evaluation at a matrix $C$ defines a multiplicative function $M_n(F)[x] \to M_n(F)$ if and only if $C$ is a scalar matrix. Further details on working with polynomials over noncommutative rings can be found in ring theory texts such as \cite[\S 16]{Lam}.

Most previous papers \cite{Brown1, Brown2, Brown3, HeuRiss, Rissner} on null ideals of matrices have considered only polynomials with scalar coefficients that vanish on $S$. The more recent article \cite{WernerNull} studied null ideals in $M_n(F)[x]$, with a particular focus on sets of $2 \times 2$ matrices. This paper may be considered a sequel to \cite{WernerNull}, in which we extend some results from the $n=2$ case to the $n=3$ case. As in \cite{WernerNull}, the major question that we will investigate is: given $S \subseteq M_n(F)$, is the null ideal $N(S)$ a two-sided ideal of $M_n(F)[x]$? Because $M_n(F)[x]$ contains polynomials with noncommuting coefficients, this may or may not be the case, and it is nontrivial to decide whether or not $N(S)$ is a two-sided ideal.

\begin{Def}
We say that $S \subseteq M_n(F)$ is \textit{core} if $N(S)$ is a two-sided ideal of $M_n(F)[x]$.
\end{Def}

A major motivation for studying null ideals is their connection to \textit{integer-valued polynomials}.  This topic has a rich history over commutative rings (see, for instance, \cite{CC}), and researchers have begun studying noncommutative versions of this topic with a focus on integer-valued polynomials on matrix rings in recent years (see \cite{WernerSurvey} for a recent survey of results).  In particular, if $D$ is a commutative integral domain with field of fractions $K$ and $S \subseteq M_n(D)$, and
\[ {\rm Int}(S, M_n(D)) := \{ f \in M_n(K)[x] : f(A) \in M_n(D) \text{ for all } A \in S\},\]
a major open question in this area is whether or not ${\rm Int}(S, M_n(D))$ is even a ring.  As is noted in \cite[Section 2]{WernerNull} (to which the interested reader is directed for relevant details), one way to approach this question is to translate it into a question about null ideals and core sets.

Thus, we seek to characterize core subsets of $M_n(F)$. In general, this is quite difficult to do. In \cite{WernerNull}, a strategy was outlined to determine when a finite subset $S$ of $M_2(F)$ is core. This process was based off of the observation (\cite[Prop.\ 3.1(3)]{WernerNull}) that any full $\GL(n,F)$-conjugacy class in $M_n(F)$ is core. For a general subset $S \subseteq M_2(F)$, we first partition $S$ into similarity classes and then examine each class individually to see whether or not it is core. Then, we consider what happens when these classes are united back into $S$. With this approach, similarity classes of matrices with an irreducible characteristic polynomial hold a special status.

\begin{Def}
For any $A \in M_n(F)$, let $\mu_A$ be the minimal polynomial of $A$. Given $f \in F[x]$, we define $\mcC_n(f) := \{A \in M_n(F) : \mu_A=f\}$. Note that when $n=2$ or $n=3$, for any $A \in \mcC_n(f)$ the similarity class of $A$ in $M_n(F)$ is equal to $\mcC_n(f)$.
\end{Def}

\begin{Prop}\label{prop:2x2 case}
Let $S \subseteq M_n(F)$ be finite and nonempty. Let $\mcP = \{\mu_A : A \in S\}$. Then, $S = \bigcup_{f \in \mcP} (S \cap \mcC_n(f))$.
\begin{enumerate}[(1)]
\item \cite[Prop.\ 3.1(4)]{WernerNull} If each class $S \cap \mcC_n(f)$ is core, then $S$ is core.
\item \cite[Cor.\ 5.4]{WernerNull} If $f \in \mcP$ is irreducible and $S \cap \mcC_n(f)$ is not core, then $S$ is not core.
\item \cite[Cor.\ 5.9]{WernerNull} Let $f \in F[x]$ be an irreducible quadratic and let $T \subseteq \mcC_2(f)$ be nonempty. Then, $T$ is core if and only if $|T| \ge 2$.
\end{enumerate}
\end{Prop}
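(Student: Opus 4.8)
The plan is to work throughout with two evaluation facts that make the notion of ``core'' tractable. First, left multiplication is compatible with evaluation: for a monomial $Bx^j$ and any $g = \sum_i G_i x^i$ one has $(Bx^j \cdot g)(A) = B\,g(A)\,A^j$, so $N(S)$ is automatically a left ideal and $S$ is core precisely when $N(S)$ is also closed under right multiplication. Second, it suffices to test right multiplication by constants: since $(gB x^j)(A) = (gB)(A)\,A^j$, the set $S$ is core iff $(gB)(A) = \sum_i G_i B A^i = 0$ for every $g \in N(S)$, every $B \in M_n(F)$, and every $A \in S$. I also record the scalar rule: if $h \in F[x]$ is central then $hB = Bh$ as polynomials and $(gh)(A) = g(A)\,h(A)$, which lets scalar polynomials move freely past matrix constants. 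Part (1) is then immediate from $N(S) = \bigcap_{f \in \mcP} N(S \cap \mcC_n(f))$: an intersection of two-sided ideals is two-sided, so if each factor is core, so is $S$.

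For Part (2) I would argue contrapositively and \emph{transport} a failure of the right-ideal property from the single class $S_f := S \cap \mcC_n(f)$ up to $S$. Choose a scalar polynomial $h \in F[x]$ vanishing on every matrix of $S \setminus S_f$ (for instance, the product of their minimal polynomials). The only place irreducibility of $f$ enters is the claim that $h(A_0)$ is invertible for $A_0 \in S_f$: this holds because $f = \mu_{A_0}$ is irreducible and coprime to each minimal polynomial occurring in $h$. Given a witness $g \in N(S_f)$ and $B$ with $(gB)(A_0) \neq 0$, the scalar rule gives $gh \in N(S)$, while $(gh)B = gBh$ satisfies $\big((gh)B\big)(A_0) = (gB)(A_0)\,h(A_0) \neq 0$ since $h(A_0)$ is invertible. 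Hence $N(S)$ is not a right ideal and $S$ is not core.

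For Part (3) I would first reduce by the central monic polynomial $f$: writing $g = qf + r$ with $\deg_x r \le 1$ and using $f(A) = 0$ on $\mcC_2(f)$, one gets $g(A) = r(A)$, so for $r = G_1 x + G_0$ membership $g \in N(T)$ becomes the linear condition $G_1 A + G_0 = 0$ for all $A \in T$. If $|T| = 1$, say $T = \{A\}$, then $g = xI - A \in N(T)$ but $(gB)(A) = BA - AB \neq 0$ for any $B$ not commuting with the non-scalar matrix $A$, so $T$ is not core. If $|T| \ge 2$, pick distinct $A, A' \in T$; subtracting the two conditions gives $G_1(A - A') = 0$. The crucial lemma is that $A - A'$ is invertible: if $(A-A')v = 0$ for some $v \neq 0$, then $Av = A'v =: w$, and since $f$ is irreducible $v$ is not an eigenvector, so $\{v,w\}$ is a basis in which both $A$ and $A'$ equal the same companion matrix of $f$, forcing $A = A'$. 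Thus $G_1 = 0$, hence $G_0 = 0$ and $r = 0$, i.e.\ $N(T) = (f)$ is the two-sided ideal generated by the central element $f$; therefore $T$ is core.

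I expect the invertible-difference lemma in Part (3) to be the main obstacle, since it is exactly where irreducibility of $f$ does real work, and it is the $2 \times 2$ prototype of the block-Vandermonde and invertible-difference phenomena that the rest of the paper must confront in the $3 \times 3$ setting. The secondary point requiring care is the coprimality step in Part (2): the needed relation $\gcd(f,\mu_A)=1$ amounts to $f \nmid \mu_A$, which is automatic when $\deg f = n$ (in particular in the quadratic case, where $\deg f = 2 = n$), and is the step to watch if one wishes to phrase the argument for larger $n$.
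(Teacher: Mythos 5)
The paper offers no proof of this proposition --- all three parts are quoted verbatim from \cite{WernerNull} --- so your argument has to stand on its own. Your preliminary evaluation identities (left multiplication commutes with evaluation; right-ideal closure need only be tested against constant multipliers; the scalar rule $(gh)(A) = g(A)h(A)$ for central $h$) are all correct, part (1) is correct and is surely the intended argument, and your proof of part (3) is complete: the reduction modulo the central monic $f$, the singleton witness $xI - A$ together with a noncommuting $B$, and the invertibility of $A - A'$ (via the observation that $v$ and $Av=A'v$ form a basis in which $A$ and $A'$ are both the companion matrix of $f$) give $N(T) = (f)$, a two-sided ideal. That invertible-difference lemma is exactly the $2\times 2$ prototype of \cite[Thm.~2.2]{Marq}, which the paper invokes in Lemma \ref{lem:1dimnull}, so you have correctly isolated where irreducibility does the work.

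Part (2), however, has a genuine gap. Writing $S_f := S \cap \mcC_n(f)$, your transport argument multiplies a witness $g \in N(S_f)$ by a scalar polynomial $h$ that vanishes on $S \setminus S_f$ and is invertible at $A_0 \in S_f$. The assertion that $f = \mu_{A_0}$ is ``coprime to each minimal polynomial occurring in $h$'' is false in general: irreducibility of $f$ only gives $\gcd(f,\mu_A) \in \{1, f\}$, and the second alternative occurs whenever $f$ properly divides $\mu_A$ for some $A \in S \setminus S_f$, which is possible as soon as $\deg f < n$ (e.g.\ $n=4$, $f$ an irreducible quadratic, $\mu_A = f^2$ or $\mu_A = f\cdot(x-\lambda)$). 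In that case no scalar multiplier can rescue the argument: any $h \in F[x]$ vanishing at such an $A$ is divisible by $\mu_A$, hence by $f = \mu_{A_0}$, hence $h(A_0) = 0$ and the witness is destroyed --- so the obstruction is to the method, not merely to your particular choice of $h$. You flag this in your closing remarks, but flagging is not fixing: the statement is asserted for arbitrary irreducible $f \in \mcP$ with no degree restriction, so your argument proves part (2) only under the extra hypothesis that $f \nmid \mu_A$ for all $A \in S \setminus S_f$. That hypothesis does hold in every application the present paper makes of the result (here $\deg f = n = 3$, so $f \mid \mu_A$ forces $\mu_A = f$), but it does not recover \cite[Cor.~5.4]{WernerNull} in full; the remaining case requires an idea other than right multiplication by a central polynomial.
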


Unfortunately, Proposition \ref{prop:2x2 case}(2) need not hold if the polynomial $f$ is reducible (see \cite[Ex.\ 5.5]{WernerNull}, where an example is given of a core set $S \subseteq M_2(F)$ for which each subset $S \cap \mcC_2(f)$ is non-core). Nevertheless, for irreducible $f$, this result highlights the importance of subsets of $\mcC_n(f)$ in the determination of core sets. 

The purpose of this paper is to study core subsets of $\mcC_3(f)$, where $f$ is irreducible of degree 3. For the remainder of the article, we will assume that $F$ is such that $F[x]$ contains an irreducible cubic polynomial $m(x):=x^3+ax^2+bx+c$. We will pay particular attention to the case where $F$ is a finite field. Since all matrices under consideration will be $3 \times 3$, we will let $\mcC(m):=\mcC_3(m)$. 

In the $3 \times 3$ case, we cannot obtain a characterization of core subsets of $\mcC(m)$ that is as clean as Proposition \ref{prop:2x2 case}(3). Moreover, the theorems we can prove require significantly more effort than in the $2 \times 2$ case. It is easy to see that for $S \subseteq \mcC(m)$ to be core, it is necessary that $|S| \ge 3$ (see Lemma \ref{lem:small sets} below). Our main results (Corollaries \ref{cor:singular case}, \ref{cor:mixed case core}, \ref{cor:DA core bound}) provide several conditions under which $S$ is guaranteed to be core. When these are applied in the case where $F$ is finite, we obtain a simple sufficient condition on $|S|$ for $S$ to be core.

\begin{Thm}\label{thm:main}
Assume that $F=\F_q$, and let $S \subseteq \mcC(m)$. If $|S| \ge q^3-q^2+1$, then $S$ is core.
\end{Thm}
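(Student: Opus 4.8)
The plan is to reduce the two-sided condition to a counting problem about roots of low-degree matrix polynomials. Because $m(x)=x^3+ax^2+bx+c$ is monic and central in $M_3(F)[x]$, every $f$ can be divided as $f=qm+r$ with $\deg r\le 2$, and since $m(A)=0$ for all $A\in\mcC(m)$ we have $f(A)=r(A)$ on $S$. Thus $N(S)$ is governed entirely by the $F$-space
\[
V(S):=\{(M_0,M_1,M_2)\in M_3(F)^3 : M_0+M_1A+M_2A^2=0\ \text{for all }A\in S\}.
\]
A routine check shows $N(S)$ is always a left ideal (it is closed under left multiplication by constants and under multiplication by the central variable $x$), so $S$ is core precisely when $N(S)$ is closed under right multiplication by constants; reducing $fM$ modulo $m$ turns this into the requirement that $V(S)$ be stable under $(M_0,M_1,M_2)\mapsto(M_0M,M_1M,M_2M)$ for every $M$. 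In particular, if $V(S)=\{0\}$ then $N(S)=M_3(F)[x]\,m(x)$ is two-sided, and $S$ is core. So it is enough to force $V(S)=\{0\}$ once $|S|$ is large.

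A nonzero element of $V(S)$ is exactly a nonzero polynomial $g(x)=M_0+M_1x+M_2x^2$ vanishing on all of $S$, so the theorem follows from the bound that every such $g$ has at most $q^3-q^2$ roots in $\mcC(m)$: if $|S|\ge q^3-q^2+1$ then $S$ cannot be contained in the root set of any nonzero $g$, whence $V(S)=\{0\}$. To count roots I would fix a vector $e_1$ and use that, since $m$ is irreducible, $e_1$ is cyclic for every $A\in\mcC(m)$; thus $A\mapsto(Ae_1,A^2e_1)=:(w,z)$ is a bijection onto pairs with $\{e_1,w,z\}$ independent, with $A$ acting by $e_1\mapsto w\mapsto z\mapsto -ce_1-bw-az$. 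Evaluating $g(A)=0$ at $e_1$, then at $w$, then at $z$ converts the matrix equation into a linear system in $(w,z)$ whose first relation is $M_1w+M_2z=-M_0e_1$. The number $q^3-q^2$ enters as the size of a single fibre $\{A\in\mcC(m):Ae_1=w\}$, which is the count of admissible values of $z$; controlling how the remaining relations cut these fibres is the heart of the matter.

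The hard part is making this uniform over all nonzero $g$, and I would organise it by the leading coefficient $M_2$, which is precisely the trichotomy behind Corollaries \ref{cor:singular case}, \ref{cor:mixed case core}, and \ref{cor:DA core bound}: the linear case $M_2=0$, the degenerate quadratic case $0\ne M_2\notin\GL(3,F)$, and the nondegenerate case $M_2\in\GL(3,F)$. The linear case is controlled by the invertible-difference relation: any two roots $A,A'$ satisfy $M_1(A-A')=0$ with $M_1\ne 0$, so $A-A'$ is singular and the root set is an independent set in the invertible-difference graph, which a rank/dimension argument bounds by $q^3-q^2$. The nondegenerate case is the most delicate, because the constraint $z=Aw$ makes the system genuinely quadratic rather than linear; here I expect to pass to the monic polynomial $x^2+M_2^{-1}M_1x+M_2^{-1}M_0$, assemble the relations obtained at $e_1,w,z$ into a block Vandermonde matrix, and bound the solution set by the rank of that matrix. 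The degenerate quadratic case interpolates, splitting $F^3$ along $\ker M_2$ and applying the linear estimate there and the quadratic estimate on a complement. I expect the block Vandermonde rank computation in the nondegenerate case to be the main obstacle, and the bookkeeping needed to guarantee the constant is exactly $q^3-q^2$ (rather than a weaker $q^3-q$) to require the full strength of the structural results cited above.
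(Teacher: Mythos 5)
Your opening reduction is sound and is consistent with the paper's framework: by Proposition \ref{prop:phi_S}, $S$ is core exactly when no nonzero polynomial of degree at most $2$ annihilates $S$, so it does suffice to show that the root set in $\mcC(m)$ of any nonzero $g(x)=M_2x^2+M_1x+M_0$ has at most $q^3-q^2$ elements. Your linear case also works: if $M_2=0$ and $M_1\neq 0$, a nonzero row $v$ of $M_1$ pins every root $A$ into the single set $E_{A,v}$, which has $q^3-q^2$ elements by Lemma \ref{lem:E_A,v}.

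The gap is that the two quadratic cases --- which carry essentially all of the content of the theorem --- are only announced, not argued. For $M_2$ invertible you say you ``expect to'' assemble a block Vandermonde matrix and ``bound the solution set by the rank of that matrix,'' but no such bound is derived, and it is not routine: what is actually needed is that for a fixed root $A$ the conjugates $(B-A)B(B-A)^{-1}$ attached to the other roots $B$ must all coincide, that the map $B\mapsto (B-A)B(B-A)^{-1}$ on $\mcD_A$ has fibers of size exactly $q^3-q^2-q-1$, and that the image set $\mcS_A$ has pairwise invertible differences. These are Theorem \ref{thm:SAIDP} and Proposition \ref{prop:counting}, proved in the paper by diagonalizing over the splitting field, normalizing to the canonical forms of Lemma \ref{lem:canonical form}, and computing an explicit determinant; nothing in your sketch substitutes for them. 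The degenerate case ($M_2\neq 0$ singular) is likewise unproven: ``splitting $F^3$ along $\ker M_2$'' does not visibly reduce to the other two cases, since a row of $g(A)=0$ with nonzero $M_2$-part imposes a constraint $uA^2+vA+w=0$ in which $vA$ is not determined by $uA$ unless $v$ is proportional to $u$. Finally, your trichotomy by the rank of the leading coefficient is not ``precisely the trichotomy behind'' Corollaries \ref{cor:singular case}, \ref{cor:mixed case core}, and \ref{cor:DA core bound}: the paper fixes $A\in S$ and splits according to whether the differences $B-A$ are all singular, all invertible, or mixed, which lets the mixed case be dispatched unconditionally (Theorem \ref{thm:mixed case}) and ties the all-singular case to the sets $E_{A,v}$ via Theorem \ref{thm:singular case}. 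A reorganization by leading coefficient would have to rebuild those results, and the proposal does not do so.
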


When $F=\F_q$ and $3 \le |S| \le q^3-q^2$, $S$ may or may not be core. In fact, for each $3 \le s \le q^3-q^2$, there exists a subset $S \subseteq \mcC(m)$ with $|S| = s$ that is core (see Corollary \ref{cor:mixed case core}) and a subset $T \subseteq \mcC(m)$ with $|T| = s$ that is not core (see Proposition \ref{prop:max non-core subset}).  In particular, these results show that the problem is far less tractable in the $3 \times 3$ case than the $2 \times 2$ case, and make the authors pessimistic that general results can easily be obtained in the $n \times n$ case for $n > 2$. 

Our techniques largely rely on calculations with block Vandermonde matrices (discussed in Section \ref{noncore section}) whose blocks are elements of $\mcC(m)$. In Theorem \ref{thm:singular case}, Theorem \ref{thm:mixed case}, Lemma \ref{lem:inv vandy lemma}, and Corollary \ref{cor:vandy invertible case}, we give sufficient conditions for these block Vandermonde matrices to be invertible. To prove these results, we are often led to the question of whether or not the difference of two elements of $\mcC(m)$ is invertible. Problems of this sort are closely related to that of characterizing invertible commutators in matrix rings, of which there is an extensive body of literature (see e.g.\ \cite{GurLan, KhuranaLam, Marq, MartinsSilva1, MartinsSilva2, Uhlig}).  Hence, while our motivation is to identify core subsets of $\mcC(m)$, our results may also be of interest to researchers working with block Vandermonde matrices or invertible commutators.

In Section \ref{noncore section}, we provide examples of core and non-core subsets of $M_3(F)$, and discuss how block Vandermonde matrices can help to decide whether or not a subset $S \subseteq \mcC(m)$ is core. This approach splits naturally into three cases, which we examine in Sections \ref{singular section}, \ref{mixed section}, and \ref{invertible section}. The proof of Theorem \ref{thm:main} is given at the conclusion of Section \ref{invertible section}.  Finally, we close the paper with a short section (Section \ref{graph section}) highlighting some interesting combinatorial consequences of our work.  Graphs related to matrices have received attention in recent years \cite{HHLS, SM}.  Moreover, sets of matrices whose pairwise differences are invertible have been of interest to researchers in finite geometry and coding theory (see, for example, \cite{KantorI, KantorII, Sheekey}). Thus, the techniques and theorems we use to characterize core subsets of $\mcC(m)$ have connections to broader areas of discrete and combinatorial mathematics.

\section{Examples of non-core sets}\label{noncore section}

As discussed in the introduction, we are interested in finding core sets $S \subseteq M_3(F)$, which are those sets for which the null ideal $N(S)$ is a two-sided ideal of $M_3(F)[x]$. For any $S \subseteq M_3(F)$, $N(S)$ is a left ideal of $M_3(F)[x]$ by \cite[Prop.\ 3.1(1)]{WernerNull}. A singleton set $\{A\}$ is core if and only if $A$ is scalar \cite[Lem.\ 3.3]{WernerNull}, while a full similarity class in $M_3(F)$ is always core \cite[Prop.\ 3.1]{WernerNull}. In general, by \cite[Prop.\ 4.1]{WernerNull}, $S$ is core if and only if $N(S)$ is generated by a polynomial with scalar coefficients. Specifically, for any $A \in M_3(F)$, let $\mu_A$ be the minimal polynomial of $A$. Define $\phi_S(x) := \lcm\{\mu_A(x)\}_{A \in S}$ to be the (monic) least common multiple in $F[x]$ of all the minimal polynomials of the elements of $S$, if such a polynomial exists. If not, we take $\phi_S(x)=0$. Clearly, $\phi_S \in N(S)$.

\begin{Prop}\label{prop:phi_S}
Let $S \subseteq M_3(F)$ and assume that $\phi_S \ne 0$.
\begin{enumerate}[(1)]
\item \cite[Thm.\ 4.4]{WernerNull} $S$ is core if and only if $N(S)$ is generated (as a two-sided ideal of $M_3(F)[x]$) by $\phi_S$.
\item \cite[Cor.\ 4.6]{WernerNull} $S$ is not core if and only if $N(S)$ contains a polynomial of degree less than $\deg \phi_S$.
\end{enumerate}
\end{Prop}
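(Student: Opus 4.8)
The plan is to derive both parts from the single equivalence that $S$ is core if and only if $N(S)=(\phi_S)$, the two-sided ideal generated by the scalar polynomial $\phi_S$. The two ingredients are a division algorithm by the central monic polynomial $\phi_S$ and a ``sandwiching'' argument that converts a low-degree element of a two-sided $N(S)$ into a low-degree \emph{scalar} polynomial, contradicting the minimality built into $\phi_S=\lcm\{\mu_A\}_{A\in S}$.

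First I would record the routine structural facts. Since $x$ is central and $\phi_S$ has scalar coefficients, $\phi_S$ is central in $M_3(F)[x]$, so $(\phi_S)=\phi_S M_3(F)[x]$ is automatically a two-sided ideal, and $(\phi_S)\subseteq N(S)$ because $\phi_S\in N(S)$. As $\phi_S$ is monic and central, every $f\in M_3(F)[x]$ admits a unique division $f=\phi_S q+r$ with $r=0$ or $\deg r<\deg\phi_S$. If $f\in N(S)$, then $r=f-\phi_S q\in N(S)$, since $\phi_S q\in(\phi_S)\subseteq N(S)$ and $N(S)$ is a left ideal. Consequently $N(S)=(\phi_S)$ if and only if $N(S)$ contains no nonzero polynomial of degree less than $\deg\phi_S$. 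This observation already exhibits parts (1) and (2) as two faces of the same statement; it remains to prove that $S$ is core exactly when $N(S)=(\phi_S)$.

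The easy direction is immediate: if $N(S)=(\phi_S)$, then $N(S)$ is two-sided, so $S$ is core. For the converse I argue the contrapositive, and this is the step I expect to be the crux. Suppose $N(S)\neq(\phi_S)$; by the division remark there is a nonzero $g=\sum_{k=0}^{d}c_k x^k\in N(S)$ with $d=\deg g<\deg\phi_S$ and $c_d\neq 0$. Assume, for contradiction, that $N(S)$ is two-sided. Choosing an entry $(p,q)$ with $(c_d)_{pq}\neq 0$ and writing $E_{ij}$ for the matrix units, two-sidedness gives $E_{ip}\,g\,E_{qi}\in N(S)$ for every $i$. Since $E_{ip}c_k E_{qi}=(c_k)_{pq}E_{ii}$ and $\sum_i E_{ii}=I$, summing over $i$ yields the scalar polynomial $a(x):=\sum_{k=0}^{d}(c_k)_{pq}\,x^k\in N(S)$. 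This $a$ is nonzero of degree $d$ (its leading coefficient is $(c_d)_{pq}\neq 0$), yet $a(A)=0$ for all $A\in S$ forces $\mu_A\mid a$ for each $A$, hence $\phi_S=\lcm\{\mu_A\}\mid a$ and $\deg a\ge\deg\phi_S$ --- contradicting $d<\deg\phi_S$. Therefore $N(S)$ is not two-sided, i.e., $S$ is not core.

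Assembling the pieces: the equivalence ``$S$ core $\iff N(S)=(\phi_S)$'' gives part (1) directly, together with the trivial converse above, while combining it with the division observation gives part (2), since $S$ is non-core exactly when $N(S)\neq(\phi_S)$, which happens exactly when $N(S)$ contains a nonzero polynomial of degree less than $\deg\phi_S$. The only genuine content is the sandwiching argument; everything else is the division algorithm by a central monic polynomial, and nothing in the argument is special to $n=3$.
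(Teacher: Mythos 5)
Your proof is correct and complete. Note that the paper does not prove this proposition itself; it cites it as \cite[Thm.\ 4.4]{WernerNull} and \cite[Cor.\ 4.6]{WernerNull}, so there is no in-text argument to compare against line by line. Your route --- division by the central monic $\phi_S$ to reduce everything to the question of whether $N(S)$ contains a nonzero polynomial of degree less than $\deg\phi_S$, plus the sandwiching $\sum_i E_{ip}\,g\,E_{qi}$ to extract a scalar polynomial of the same degree from a low-degree element of a putative two-sided $N(S)$ --- is essentially the standard argument behind the cited results. The matrix-unit sandwich is equivalent to the structural fact that every two-sided ideal of $M_n(F)[x]\cong M_n(F[x])$ has the form $M_n(J)$ for an ideal $J\subseteq F[x]$, which is the way the source paper packages the same content; your version is more hands-on but proves exactly the same thing, and you correctly isolate the one nontrivial ingredient (that a scalar polynomial annihilating every $A\in S$ must be divisible by $\phi_S=\lcm\{\mu_A\}$). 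All the auxiliary steps you use --- that $(\phi_S)\subseteq N(S)$ under right evaluation, that $N(S)$ is a left ideal, and that $\deg(\phi_S g)=\deg\phi_S+\deg g$ because $\phi_S$ is monic --- check out.
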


Because of Proposition \ref{prop:phi_S}, to determine if $S$ is core it suffices to consider the polynomials in $N(S)$, rather than the ideal structure of $M_3(F)[x]$. Clearly, this task will be easier when $\deg \phi_S$ is small. Hence, we shall focus on subsets $S \subseteq \mcC(m)$, where $m(x):=x^3+ax^2+bx+c \in F[x]$ is irreducible. Given $A \in \mcC(m)$, we have $\mu_A = m$, so $\phi_S=m$ for all (nonempty) $S \subseteq \mcC(m)$. Since $m$ is irreducible, $F[A]$ (the $F$-subalgebra of $M_3(F)$ generated by $A$) is isomorphic to the field $F[x]/(m)$. In particular, when $F=\F_q$, we have $\F_q[A] \cong \F_{q^3}$, and the centralizer of $A$ in $\GL(3,q)$ has order $q^3-1$. Consequently, in this case $\mcC(m)$---being the full conjugacy class of $A$---has cardinality 
\begin{equation}
\label{eq:Cmsize}
|\mcC(m)| = |\GL(3,q)|/(q^3-1) = (q^3-q)(q^3-q^2).
\end{equation}

Most of our work is based off of calculations with Vandermonde matrices. Given matrices $A_1, \ldots, A_d$ in $M_3(F)$, we define $\mcV(A_1, \ldots, A_d)$ to be the following block Vandermonde matrix:
\begin{equation*}
\mcV(A_1, \ldots, A_d) := \begin{bmatrix}
I & I & \cdots & I\\ A_1 & A_2 & \cdots & A_d\\ A_1^2 & A_2^2 & \cdots & A_d^2 \\ \vdots & \vdots & \ddots & \vdots \\ A_1^{d-1} & A_2^{d-1} & \cdots & A_d^{d-1}
\end{bmatrix}
\end{equation*}
These block matrices are closely tied to polynomial evaluation. Given a degree $d-1$ polynomial $f(x) = \sum_{i=0}^{d-1} \alpha_i x^i$ with coefficients from $M_3(F)$, we have
\begin{equation}\label{Vandy}
\begin{bmatrix}\alpha_0 & \alpha_1 & \cdots & \alpha_{d-1} \end{bmatrix} \mcV(A_1, \ldots, A_d) = \begin{bmatrix}f(A_1) & f(A_2) & \cdots & f(A_d) \end{bmatrix}
\end{equation}
Using the relation \eqref{Vandy}, we can test whether a polynomial is in $N(S)$. We can also give a sufficient condition for $S$ to be core.

\begin{Lem}\label{lem:Vandy}\cite[Prop.\ 4.9]{WernerNull} Let $S \subseteq M_3(F)$ and let $d = \deg \phi_S$. If there exist $A_1, \ldots, A_d \in S$ such that $\mcV(A_1, \ldots, A_d)$ is invertible, then $S$ is core. In particular, if $S \subseteq \mcC(m)$ and there exist $A, B, C \in S$ such that $\mcV(A,B,C)$ is invertible, then $S$ is core.
\end{Lem}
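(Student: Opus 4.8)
The plan is to invoke Proposition \ref{prop:phi_S}(2), which translates the statement ``$S$ is core'' into the purely polynomial condition that $N(S)$ contain no nonzero polynomial of degree strictly less than $d = \deg \phi_S$. So I would show that the invertibility of $\mcV(A_1, \ldots, A_d)$ rules out all such low-degree elements of $N(S)$, and then read off the conclusion from the contrapositive of Proposition \ref{prop:phi_S}(2).

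Concretely, I would take any $f \in N(S)$ of degree less than $d$ and write it as $f(x) = \sum_{i=0}^{d-1} \alpha_i x^i$ with $\alpha_i \in M_3(F)$, padding with zero coefficients so that all $d$ slots appear. Since $A_1, \ldots, A_d \in S$, the membership $f \in N(S)$ gives $f(A_j) = 0$ for every $j$, so the relation \eqref{Vandy} becomes
\[ \begin{bmatrix} \alpha_0 & \cdots & \alpha_{d-1} \end{bmatrix} \mcV(A_1, \ldots, A_d) = \begin{bmatrix} f(A_1) & \cdots & f(A_d) \end{bmatrix} = \begin{bmatrix} 0 & \cdots & 0 \end{bmatrix}. \]
Right-multiplying by $\mcV(A_1, \ldots, A_d)^{-1}$, which exists by hypothesis, then collapses the left-hand side to the block row $\begin{bmatrix} \alpha_0 & \cdots & \alpha_{d-1} \end{bmatrix}$, forcing every $\alpha_i = 0$ and hence $f = 0$. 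Thus $N(S)$ has no nonzero polynomial of degree below $d$, and Proposition \ref{prop:phi_S}(2) yields that $S$ is core. For the final assertion, I would observe that any nonempty $S \subseteq \mcC(m)$ satisfies $\mu_A = m$ for all $A \in S$, whence $\phi_S = m$ and $d = 3$; the claim is then precisely the general statement applied to $(A_1, A_2, A_3) = (A, B, C)$.

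There is little genuine difficulty to overcome here, since the essential dictionary between polynomial evaluation and the block Vandermonde matrix has already been supplied by \eqref{Vandy}. The one point demanding care is the noncommutativity of $M_3(F)$: the coefficients $\alpha_i$ must be kept on the \emph{left}, in accordance with the right-evaluation convention fixed in the introduction, so that the row-by-matrix product in \eqref{Vandy} truly computes the values $f(A_j)$, and so that right-multiplication by the inverse Vandermonde matrix legitimately isolates the coefficient row rather than scrambling it.
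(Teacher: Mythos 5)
Your argument is correct and follows essentially the same route as the paper: both use relation \eqref{Vandy} to show that any $f \in N(S)$ of degree less than $d$ has all coefficients annihilated by the invertibility of $\mcV(A_1,\ldots,A_d)$, and then conclude via Proposition \ref{prop:phi_S}(2). Your added remarks on padding the coefficient row and on the left placement of coefficients under the right-evaluation convention are accurate points of care that the paper leaves implicit.
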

\begin{proof}
This is shown in \cite{WernerNull}, but we include the short proof for the sake of completeness. Suppose that $A_1, \ldots, A_d \in S$ and $\mcV:=\mcV(A_1, \ldots, A_d)$ is invertible. If $f(x) = \sum_{i=0}^{d-1} \alpha_i x^i \in N(S)$, then the right-hand side of \eqref{Vandy} is $[0 \; \cdots \; 0]$. Since $\mcV$ is invertible, each $\alpha_i=0$. Thus, $f=0$, $N(S)$ contains no polynomial of degree less than $d$, and $S$ is core by Proposition \ref{prop:phi_S}(2).
\end{proof}

\begin{Lem}\label{lem:small sets}\mbox{}
\begin{enumerate}[(1)]
\item Let $A, B \in M_3(F)$. Then, there exists a polynomial of degree at most 2 in $N(\{A, B\})$.
\item Let $S \subseteq \mcC(m)$ be nonempty. If $|S| \le 2$, then $S$ is not core.
\end{enumerate}
\end{Lem}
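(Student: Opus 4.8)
The plan is to prove part (1) by a straightforward dimension count and then deduce part (2) from it together with Proposition~\ref{prop:phi_S}(2). For part (1), I would regard a degree-at-most-$2$ matrix polynomial $f(x) = \alpha_0 + \alpha_1 x + \alpha_2 x^2$ as being parametrized by the triple $(\alpha_0, \alpha_1, \alpha_2) \in M_3(F)^3$, an $F$-vector space of dimension $27$. The assignment $(\alpha_0,\alpha_1,\alpha_2) \mapsto (f(A), f(B))$ is $F$-linear into $M_3(F)^2$, which has dimension $18$. Since $27 > 18$, this map has nontrivial kernel, so there is a \emph{nonzero} triple with $f(A) = f(B) = 0$, i.e.\ a nonzero $f \in N(\{A,B\})$ with $\deg f \le 2$. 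Equivalently, in the language of \eqref{Vandy}, the block Vandermonde matrix $\mcV(A,B)$ is $9 \times 6$, so the space of row vectors $w$ with $w\,\mcV(A,B) = 0$ has dimension at least $9 - 6 = 3 > 0$; any nonzero such $w$, read off as a coefficient block $[\alpha_0 \; \alpha_1 \; \alpha_2]$, yields the desired nonzero polynomial.

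For part (2), I would first record that every $A \in \mcC(m)$ has minimal polynomial $m$, which is irreducible of degree $3$; hence for every nonempty $S \subseteq \mcC(m)$ we have $\phi_S = m$ and $\deg \phi_S = 3$. Now take any nonempty $S$ with $|S| \le 2$ and choose $A, B \in S$ (allowing $A = B$ when $|S| = 1$), so that $\{A,B\} = S$ and thus $N(S) = N(\{A,B\})$. Applying part (1) produces a nonzero $f \in N(S)$ with $\deg f \le 2 < 3 = \deg \phi_S$. Therefore $N(S)$ contains a polynomial of degree less than $\deg \phi_S$, and $S$ is not core by Proposition~\ref{prop:phi_S}(2). (In the singleton case one may alternatively invoke the criterion \cite[Lem.\ 3.3]{WernerNull} directly, since elements of $\mcC(m)$ are non-scalar.)

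There is no substantive obstacle here; the argument is routine linear algebra. The only two points that require care are, first, ensuring that the polynomial produced in part (1) is genuinely nonzero—this is precisely what the strict inequality $27 > 18$ (equivalently, the strictly positive kernel dimension) guarantees—and second, correctly identifying $\deg \phi_S = 3$ for subsets of $\mcC(m)$, so that a witness of degree at most $2$ really does certify non-coreness through Proposition~\ref{prop:phi_S}(2).
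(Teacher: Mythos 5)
Your proof is correct, but it reaches part (1) by a genuinely different route than the paper. You argue by pure dimension count: the coefficient space $M_3(F)^3$ has dimension $27$ over $F$, the evaluation map $(\alpha_0,\alpha_1,\alpha_2)\mapsto(f(A),f(B))$ is $F$-linear into an $18$-dimensional space, so the kernel is nontrivial and a nonzero annihilating polynomial of degree at most $2$ exists. The paper instead constructs the polynomial explicitly by block row operations on $\mcV(A,B)$, splitting into cases according to whether $B-A$ is invertible: when it is, the construction yields a \emph{monic} quadratic in $N(\{A,B\})$; when it is not, a degree-one polynomial $\alpha_1 x - \alpha_1 A$ with $\alpha_1(B-A)=0$. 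Your argument is shorter, needs no case analysis, and generalizes at once (any $d$ matrices in $M_n(F)$ admit a nonzero annihilating polynomial of degree at most $d$, since $(d+1)n^2 > dn^2$); what it gives up is the extra structural information the paper extracts (monicity of the quadratic, and the nullspace mechanism in the singular case), which foreshadows the block-Vandermonde row-reduction used throughout Sections 3--5. One small informality: a single row vector $w \in F^9$ in the left kernel of the $9\times 6$ matrix $\mcV(A,B)$ is not itself a coefficient block $[\alpha_0\ \alpha_1\ \alpha_2]$ (those are $3\times 9$); you should pad $w$ with two zero rows, or simply rely on your primary $27>18$ argument, which is airtight. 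Your treatment of part (2) is fine: taking $A=B$ in the singleton case is a legitimate shortcut around the paper's citation of \cite[Lem.\ 3.3(1)]{WernerNull}, and you correctly note that the polynomial produced must be nonzero for Proposition \ref{prop:phi_S}(2) to apply.
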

\begin{proof}
(1) Let $\mcV$ be the Vandermonde matrix
\begin{equation*}
\mcV = \begin{bmatrix} I & I \\ A & B \\ A^2 & B^2 \end{bmatrix}.
\end{equation*}
We seek matrices $\alpha_0$, $\alpha_1$, and $\alpha_2$ such that $\begin{bmatrix} \alpha_0 & \alpha_1 & \alpha_2\end{bmatrix}\mcV=0$. If such an equation holds, then by \eqref{Vandy} the polynomial $\alpha_2 x^2 + \alpha_1 x + \alpha_0$ is in $N(\{A, B\})$. 

To find the matrices $\alpha_0$, $\alpha_1$, and $\alpha_2$, we can perform block row operations on $\mcV$. If $B-A$ is invertible, then let $U= (B-A)^{-1}$ and take
\begin{equation*}
\begin{bmatrix}\alpha_0 & \alpha_1 & \alpha_2\end{bmatrix} = \begin{bmatrix}0 & 0 & I\end{bmatrix} \begin{bmatrix} I & 0 & 0 \\ 0 & I & 0\\ 0 & -(B^2-A^2)U & I\end{bmatrix} \begin{bmatrix} I & 0 & 0 \\ -A & I & 0 \\ -A^2 & 0 & I \end{bmatrix}.
\end{equation*}
Then $\alpha_2=I$ and $\begin{bmatrix}\alpha_0 & \alpha_1 & \alpha_2\end{bmatrix}\mcV=0$, so $x^2 + \alpha_1 x + \alpha_0$ is a monic quadratic polynomial in $N(\{A,B\})$. If $B-A$ is not invertible, then let $\alpha_1$ be nonzero and such that $\alpha_1(B-A) = 0$. Then, $\alpha_1 x - \alpha_1 A \in N(\{A,B\})$.

(2) By assumption, $m$ is an irreducible cubic, so $\phi_S = m$. If $|S|=1$, then $S$ is not core by \cite[Lem.\ 3.3(1)]{WernerNull}. If $|S|=2$, then by (1) $N(S)$ contains a polynomial of degree at most 2. Hence, $S$ is not core by Proposition \ref{prop:phi_S}(2).
\end{proof}

Next, we will show that $\mcC(m)$ contains non-core subsets of cardinality greater than 2. In particular, when $F=\F_q$, $\mcC(m)$ contains non-core subsets with $s$ elements for each $1 \le s \le q^3-q^2$. Recall that $m(x)=x^3+ax^2+bx+c$ is irreducible, so $c \ne 0$.

\begin{Lem}\label{lem:counting mats}
Let $B \in \mcC(m)$ have the form 
\begin{equation*}
B = \begin{bmatrix}0&0&-c\\x_1&x_2&x_3\\x_4&x_5&x_6\end{bmatrix}
\end{equation*}
for some $x_1, \ldots, x_6 \in F$. Then, $x_5 \ne 0$, and $B$ is completely determined by $x_2$, $x_4$, and $x_5$. Thus, when $F=\F_q$, the set $\mcC(m)$ contains $q^3-q^2$ matrices with first row equal to $\begin{bmatrix}0&0&-c\end{bmatrix}$.
\end{Lem}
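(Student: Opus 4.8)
The plan is to translate membership in $\mcC(m)$ into conditions on the coefficients of the characteristic polynomial. Since $m$ is an irreducible cubic, a $3 \times 3$ matrix lies in $\mcC(m)$ precisely when its characteristic polynomial equals $m$: the minimal polynomial divides the characteristic polynomial and shares its irreducible factors, so an irreducible cubic characteristic polynomial forces the minimal polynomial to equal it, and conversely $\mu_B = m$ of degree $3$ forces the characteristic polynomial to be $m$. Thus I would compute the characteristic polynomial of the given $B$ and match it against $x^3 + ax^2 + bx + c$. Writing $\det(xI - B) = x^3 - \tr(B)x^2 + e_2(B)\, x - \det(B)$, where $e_2(B)$ denotes the sum of the principal $2 \times 2$ minors, and using that the first row of $B$ is $\begin{bmatrix} 0 & 0 & -c\end{bmatrix}$, I would record three scalar equations:
\[ x_2 + x_6 = -a, \qquad cx_4 + x_2 x_6 - x_3 x_5 = b, \qquad x_1 x_5 - x_2 x_4 = 1, \]
the last of these coming from $\det(B) = -c(x_1 x_5 - x_2 x_4)$ together with $c \neq 0$.

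The crux, and the one genuinely nonroutine step, is proving $x_5 \neq 0$. Here I would argue by contradiction: if $x_5 = 0$, then the second column of $xI - B$ is $(0,\, x - x_2,\, 0)^{\mathsf T}$, so expanding the determinant along that column immediately yields the factorization
\[ \det(xI - B) = (x - x_2)\bigl(x^2 - x_6 x + c x_4\bigr). \]
This exhibits a linear factor $x - x_2 \in F[x]$, contradicting the irreducibility of $m$. Hence $x_5 \neq 0$ for every $B \in \mcC(m)$ of the stated form. I expect this observation to be the main obstacle simply because it requires spotting that the vanishing of $x_5$ places a second zero in the middle column and thereby splits off an eigenvalue in $F$; once seen, the rest is bookkeeping.

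With $x_5$ known to be invertible, the remaining entries are forced by the three equations. The trace equation gives $x_6 = -a - x_2$, the determinant equation gives $x_1 = (1 + x_2 x_4)/x_5$, and the minor equation gives $x_3 = (cx_4 + x_2 x_6 - b)/x_5$. Each of $x_1$, $x_3$, $x_6$ is thus a function of $x_2$, $x_4$, $x_5$ alone, so $B$ is completely determined by the triple $(x_2, x_4, x_5)$.

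For the count over $F = \F_q$, I would verify that the construction reverses: given any $x_2, x_4 \in \F_q$ and any nonzero $x_5 \in \F_q$, defining $x_1$, $x_3$, $x_6$ by the three formulas above produces a matrix whose trace, sum of principal $2 \times 2$ minors, and determinant equal $-a$, $b$, and $-c$ respectively by construction, hence whose characteristic polynomial is exactly $m$; as $m$ is irreducible, this $B$ lies in $\mcC(m)$. Distinct triples yield distinct matrices because $x_2$, $x_4$, $x_5$ appear directly as entries. This establishes a bijection between the matrices in question and triples $(x_2, x_4, x_5) \in \F_q \times \F_q \times \F_q^\times$, of which there are $q \cdot q \cdot (q - 1) = q^3 - q^2$.
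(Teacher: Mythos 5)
Your proposal is correct and follows essentially the same route as the paper: equate the coefficients of the characteristic polynomial with those of $m$, rule out $x_5=0$ via the irreducibility of $m$ (the paper exhibits $\begin{bmatrix}0&1&0\end{bmatrix}^{\mathsf T}$ as an eigenvector with eigenvalue $x_2$, which is the same observation as your factorization $(x-x_2)(x^2-x_6x+cx_4)$), and then solve for $x_1$, $x_3$, $x_6$ in terms of $x_2$, $x_4$, $x_5$. Your explicit check that every triple $(x_2,x_4,x_5)\in\F_q\times\F_q\times\F_q^{\times}$ conversely produces an element of $\mcC(m)$ is a welcome extra step that the paper leaves implicit when passing from ``determined by'' to the exact count $q^3-q^2$.
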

\begin{proof} First, note that such a $B$ must exist, because the companion matrix $\left[\begin{smallmatrix}0&0&-c\\1&0&-b\\0&1&-a\end{smallmatrix}\right]$ of $m$ is an element of $\mcC(m)$. Next, if $x_5=0$, then $\begin{bmatrix}0&1&0\end{bmatrix}^T$ is a right eigenvector of $B$ with eigenvalue $x_2$. This contradicts the fact that $m$ is irreducible. So, $x_5 \ne 0$. 

Computing the characteristic polynomial of $B$ yields
\begin{equation}\label{char poly}
m(x) = x^3 + (-x_2-x_6)x^2 + (cx_4-x_3x_5+x_2x_6)x + c(x_1x_5-x_2x_4).
\end{equation}
This forces $x_1x_5-x_2x_4=1$, and since $x_5 \ne 0$ we obtain
\begin{equation}\label{x1 eq}
x_1=x_5^{-1}(1+x_2x_4).
\end{equation}
Now, from \eqref{char poly} we have $b = cx_4-x_3x_5+x_2x_6$. Solving this equation for $x_3$ gives $x_3 = x_5^{-1}(cx_4+x_2x_6-b)$. But, by \eqref{char poly}, $x_6 = -x_2-a$. Using this substitution produces
\begin{equation}\label{x3 eq}
x_3 = x_5^{-1}(cx_4+x_2(-x_2-a)-b) = x_5^{-1}(-x_2^2-ax_2+cx_4-b).
\end{equation}
Equations \eqref{x1 eq} and \eqref{x3 eq}, along with the relation $x_6 = -x_2-a$, show that all of the entries of $B$ are determined by $x_2$, $x_4$, and $x_5$. When $F=\F_q$, there are $q$ choices for each of $x_2$ and $x_4$, and $q-1$ choices for $x_5$. Thus, in that case, the number of matrices $B$ in $\mcC(m)$ is equal to $q^2(q-1)=q^3-q^2$.
\end{proof}

\begin{Prop}\label{prop:max non-core subset}
Let $T$ be the set of all matrices in $\mcC(m)$ with first row equal to $\begin{bmatrix}0&0&-c\end{bmatrix}$. Then, every nonempty subset of $T$ is not core. When $F=\F_q$, we have $|T|=q^3-q^2$, and for each $1\le s \le q^3-q^2$, $\mcC(m)$ contains a non-core subset with $s$ elements.
\end{Prop}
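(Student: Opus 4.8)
The plan is to produce a single nonzero polynomial of degree $1$ lying in $N(T)$, and hence in $N(S)$ for every nonempty $S \subseteq T$; by Proposition \ref{prop:phi_S}(2) this immediately forces every such $S$ to be non-core. The defining feature of $T$ is that every $B \in T$ has first row $\begin{bmatrix} 0 & 0 & -c\end{bmatrix}$. Writing $E_{ij}$ for the matrix unit with a $1$ in position $(i,j)$ and zeros elsewhere, left-multiplication by $E_{11}$ reads off the first row, so that $E_{11}B = -c\,E_{13}$ for every $B \in T$. Consequently the polynomial $f(x) := E_{11}x + c\,E_{13}$ satisfies, under right evaluation, $f(B) = E_{11}B + c\,E_{13} = -c\,E_{13} + c\,E_{13} = 0$ for all $B \in T$. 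Since its leading coefficient $E_{11}$ is nonzero, $f$ is a genuine degree-$1$ element of $N(T)$.

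Next I would invoke the coreness criterion. For any nonempty $S \subseteq T$ we have $\phi_S = m$, which has degree $3$, whereas $f \in N(T) \subseteq N(S)$ has degree $1 < 3$. Proposition \ref{prop:phi_S}(2) then yields at once that $S$ is not core, establishing the first assertion.

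The counting statements follow by combining this with Lemma \ref{lem:counting mats}: that lemma gives $|T| = q^3 - q^2$ when $F = \F_q$, and for any $s$ with $1 \le s \le q^3 - q^2 = |T|$ one simply selects a subset $S \subseteq T$ of size $s$, which is then a non-core subset of $\mcC(m)$ with exactly $s$ elements. I anticipate no serious obstacle here: the entire argument rests on the single observation that a common first row becomes the left-multiplication identity $E_{11}B = -c\,E_{13}$, which manufactures the degree-$1$ null polynomial. The only point requiring a moment's care is the evaluation convention — because $x$ is central and the coefficients sit to the left, $f(B)$ is indeed $E_{11}B + c\,E_{13}$ — after which the conclusion is an immediate application of Proposition \ref{prop:phi_S}(2) and Lemma \ref{lem:counting mats}.
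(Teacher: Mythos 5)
Your proof is correct and is essentially identical to the paper's: the polynomial $f(x) = E_{11}x + c\,E_{13}$ you construct is exactly the degree-one element of $N(T')$ exhibited in the paper's proof (written there with explicit $3\times 3$ matrices), and the appeal to Proposition \ref{prop:phi_S}(2) and Lemma \ref{lem:counting mats} matches the paper's argument step for step. No issues.
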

\begin{proof} 
Let $T' \subseteq T$ be nonempty. Since every matrix in $T'$ has first row $\begin{bmatrix}0&0&-c\end{bmatrix}$, the polynomial 
\begin{equation*}
f(x) = \left[\begin{smallmatrix}1&0&0\\0&0&0\\0&0&0\end{smallmatrix}\right]x - \left[\begin{smallmatrix}0&0&-c\\0&0&0\\0&0&0\end{smallmatrix}\right]
\end{equation*}
is in $N(T')$. But, $\phi_{T'}=m$, which has degree 3, so $T'$ is not core by Proposition \ref{prop:phi_S}(2). If $F$ is finite of order $q$, then $|T|=q^3-q^2$ by Lemma \ref{lem:counting mats}. Hence, $1 \le |T'| \le q^3-q^2$.
\end{proof}

Much of the remainder of this paper is devoted to showing that, when $F=\F_q$, the subset $T$ in Proposition \ref{prop:max non-core subset} has the largest cardinality of any non-core subset of $\mcC(m)$. In other words, if $F=\F_q$, $S \subseteq \mcC(m)$, and $|S| \ge q^3-q^2+1$, then $S$ is core. 

We end this section with a generalization of Lemma \ref{lem:counting mats} that will be used later. Let $e_1 = \begin{bmatrix}1&0&0\end{bmatrix}$. Then, two matrices $A$ and $B$ share the same first row if and only if $e_1 A = e_1 B$. Given $A \in \mcC(m)$, rather than focusing on matrices that share the same first row as $A$, we can consider matrices $B$ such that $vA=vB$ for some nonzero $v \in F^3$.

\begin{Def}\label{def:E_A,v}
For each $A \in \mcC(m)$ and each nonzero $v \in F^3$, define $E_{A,v} := \{B \in \mcC(m) : vB=vA\}$. 
\end{Def}

\begin{Lem}\label{lem:E_A,v}
Let $A \in \mcC(m)$. Let $v \in F^3$ be nonzero.
\begin{enumerate}[(1)]
\item If $U$ is invertible, then $U^{-1} (E_{A,v}) U = E_{U^{-1}AU,vU}$. 
\item $|E_{A,v}| = |E_{A,e_1}|$.
\item If $F=\F_q$, then $|E_{A,v}| = q^3-q^2$.
\end{enumerate}
\end{Lem}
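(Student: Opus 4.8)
The plan is to prove the three parts in order, using part (1) as the key structural tool and reducing parts (2) and (3) to the already-established count in Lemma \ref{lem:counting mats}.

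For part (1), I would argue directly from the definitions. The claim is that conjugation by $U$ carries the fibre $E_{A,v}$ bijectively onto $E_{U^{-1}AU,\,vU}$. First note that $U^{-1}AU \in \mcC(m)$, since conjugation preserves the minimal polynomial and hence membership in the similarity class $\mcC(m)$; likewise $vU$ is a nonzero row vector because $U$ is invertible, so the target set $E_{U^{-1}AU,\,vU}$ is well-defined. Now take any $B \in E_{A,v}$, so that $vB = vA$, and set $B' = U^{-1}BU$. Then $B' \in \mcC(m)$, and I would check the defining relation for the target fibre: $(vU)B' = (vU)(U^{-1}BU) = vBU = vAU = (vU)(U^{-1}AU)$, so $B' \in E_{U^{-1}AU,\,vU}$. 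This shows $U^{-1}(E_{A,v})U \subseteq E_{U^{-1}AU,\,vU}$. For the reverse inclusion, I would run the same computation with the roles reversed, using the inverse conjugation $U(-)U^{-1}$; since $B \mapsto U^{-1}BU$ is a bijection on $M_3(F)$, the two inclusions give the claimed equality of sets rather than just a containment.

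For part (2), I would use part (1) together with the transitivity of the $\GL(3,F)$-action on nonzero vectors by right multiplication. Given nonzero $v$, choose an invertible $U$ with $vU = e_1$ (such $U$ exists since any nonzero row vector can be completed to a basis and hence sent to $e_1$ by a suitable invertible matrix). Applying part (1), $U^{-1}(E_{A,v})U = E_{U^{-1}AU,\,e_1}$, and since conjugation by $U$ is a bijection, $|E_{A,v}| = |E_{U^{-1}AU,\,e_1}|$. So it remains to see that the size of $E_{A',e_1}$ is independent of the choice of $A' \in \mcC(m)$. For this I would again invoke part (1): for any two matrices $A_1, A_2 \in \mcC(m)$ there is an invertible $W$ with $W^{-1}A_1 W = A_2$, and I can arrange (or simply absorb) the effect on $e_1$; more cleanly, $E_{A,e_1}$ consists of all $B \in \mcC(m)$ sharing their first row with $A$, and the count of such $B$ depends only on the common first row $e_1 A$, which is where the parametrization of Lemma \ref{lem:counting mats} will enter.

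For part (3), I would specialize to $F = \F_q$ and combine part (2) with Lemma \ref{lem:counting mats}. By part (2), $|E_{A,v}| = |E_{A,e_1}|$, and $E_{A,e_1}$ is precisely the set of matrices in $\mcC(m)$ whose first row equals $e_1 A$. The subtlety is that Lemma \ref{lem:counting mats} counts matrices whose first row is exactly $\begin{bmatrix}0&0&-c\end{bmatrix}$, not an arbitrary first row $e_1 A$; the main obstacle is therefore to reduce a general first row to this normalized one. I expect this to be handled by a further conjugation: I would find an invertible $P$ fixing the line spanned by $e_1$ (so that $e_1 P = \lambda e_1$, keeping us in the $E_{-,e_1}$ setting) and simultaneously sending $e_1 A$ to a scalar multiple of $\begin{bmatrix}0&0&-c\end{bmatrix}$, or else argue that every $A \in \mcC(m)$ is conjugate to the companion matrix, whose first row is exactly $\begin{bmatrix}0&0&-c\end{bmatrix}$, and track the first row under that conjugation using part (1). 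Either way, the count collapses to the $q^3 - q^2$ obtained in Lemma \ref{lem:counting mats}, yielding $|E_{A,v}| = q^3 - q^2$.
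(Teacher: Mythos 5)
Your part (1) is correct and is exactly the paper's computation. The genuine gap is in part (2). If you pick an \emph{arbitrary} invertible $U$ with $vU=e_1$, then part (1) lands you in $E_{U^{-1}AU,\,e_1}$, not in $E_{A,e_1}$, and the bridge you propose -- that $|E_{A',e_1}|$ is independent of $A'\in\mcC(m)$ -- is asserted rather than proved. Your two suggested justifications do not close it: conjugating $A'$ back to $A$ by some $W$ moves $e_1$ to $e_1W$, so you would need part (2) again (circular); and Lemma \ref{lem:counting mats} parametrizes only the matrices with first row exactly $\begin{bmatrix}0&0&-c\end{bmatrix}$, so it does not tell you that the count is the same for every achievable first row $e_1A'$. (Saying the count ``depends only on the common first row'' is tautological; what you need is that it is the \emph{same} for all first rows.)

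The paper's fix is a specific choice of $U$: take $U\in\mcF=F[A]$. Since $m$ is irreducible, $\mcF$ is a field, so $F^3$ is a one-dimensional $\mcF$-vector space and the nonzero elements of $\mcF$ act transitively on the nonzero row vectors; hence there is an invertible $U\in\mcF$ with $vU=e_1$, and this $U$ \emph{commutes with} $A$. Part (1) then gives $U^{-1}(E_{A,v})U=E_{U^{-1}AU,\,vU}=E_{A,e_1}$ directly, with no auxiliary claim needed. With (2) in hand, the second alternative you sketch for part (3) -- conjugate $A$ to the companion matrix $C$ by some $P$, note $E_{C,e_1}=P^{-1}(E_{A,w})P$ with $w=e_1P^{-1}$, and apply (2) to get $|E_{A,w}|=|E_{A,e_1}|$ before invoking Lemma \ref{lem:counting mats} -- is precisely the paper's argument and is fine. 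Your first alternative for (3) (finding $P$ stabilizing the line $\langle e_1\rangle$ while normalizing the first row) would need its own existence argument and is best dropped.
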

\begin{proof}
(1) Let $U$ be invertible. Given $B \in E_{A,v}$, we have
\begin{equation*}
(vU)(U^{-1}BU) = v(BU) = v(AU) = (vU)(U^{-1}AU). 
\end{equation*}
Thus, $U^{-1} (E_{A,v}) U \subseteq E_{U^{-1}AU,vU}$. Conversely, if $B \in E_{U^{-1}AU,vU}$, then 
\begin{equation*}
v(UBU^{-1}) = (vU)B(U^{-1}) = (vU)(U^{-1}AU)(U^{-1}) = vA
\end{equation*}
so $UBU^{-1} \in E_{A,v}$.

(2) Let $\mcF=F[A]$. Since $m$ is irreducible, $\mcF$ is isomorphic to the field $F[x]/(m)$ and $\mcF$ acts transitively on $F^3$. 
So, there exists an invertible matrix $U \in \mcF$ such that $vU=e_1$; note that $U$ commutes with $A$. By part (1), $U^{-1}(E_{A,v})U = E_{A,e_1}$. Since conjugation by $U^{-1}$ is a bijection, we get $|E_{A,v}| = |U^{-1}(E_{A,v})U|=|E_{A,e_1}|$.

(3) Assume $F=\F_q$ and let $C$ be the companion matrix of $m$. Then, $C \in \mcC(m)$, and there exists an invertible matrix $P$ such that $C = P^{-1}AP$. Let $w=e_1P^{-1}$. Then, by part (1), $E_{C,e_1} = P^{-1}(E_{A,w})P$, and so $|E_{C,e_1}| = |E_{A,w}|=|E_{A,e_1}|$ by part (2). Since $C$ has first row $\begin{bmatrix}0&0&-c\end{bmatrix}$, $|E_{C,e_1}|=q^3-q^2$ by Lemma \ref{lem:counting mats}, which completes the proof.
\end{proof}

\section{Singular Differences}\label{singular section}

As in Section \ref{noncore section}, assume throughout that $m(x)=x^3+ax^2+bx+c \in F[x]$ is irreducible. For any nonempty subset $S \subseteq \mcC(m)$, we know that $\phi_S=m$, which has degree three. Hence, by Lemma \ref{lem:Vandy}, we can conclude that $S$ is core if we can find three matrices $A, B, C \in S$ such that the Vandermonde matrix $\mcV:=\mcV(A,B,C)$ is invertible. Given such matrices $A$, $B$, and $C$, we can perform block row operations on $\mcV(A,B,C)$ to produce the matrix
\begin{equation*}
\mcV_1 := \begin{bmatrix} I&I&I\\0&B-A&C-A\\0&B^2-A^2&C^2-A^2\end{bmatrix}.
\end{equation*}
Let $\mcV_2:=\begin{bmatrix}B-A&C-A\\B^2-A^2&C^2-A^2\end{bmatrix}$. 
Then, $\det \mcV =\det \mcV_1  = \det \mcV_2$, and sufficient conditions under which $S$ is core can be found by studying the matrix $\mcV_2$.

We will consider three cases, depending on whether both $B-A$ and $C-A$ are singular; one of $B-A$ or $C-A$ is singular and the other is invertible; or both $B-A$ and $C-A$ are invertible. In this section, we will focus on the first case, where both differences are singular.

\begin{Def}\label{def:nullspace}
For any matrix $A \in M_3(F)$, let $\mfn(A)$ be the left nullspace of $A$ and let $\mfn_r(A)$ be the right nullspace of $A$. So, elements of $\mfn(A)$ are row vectors, and elements of $\mfn_r(A)$ are column vectors. Given two matrices $A, B \in \mcC(m)$, define $\mfN_{AB}:=\mfn(B-A) \oplus \mfn(B^2-A^2)$.
\end{Def}

\begin{Lem}\label{lem:1dimnull}
Let $A, B \in \mcC(m)$ such that $A \ne B$. If $B-A$ is singular, then $\mfn(B-A)$ is 1-dimensional. Moreover, for all $\lambda \in F$, if $B^2-A^2 - \lambda(B-A)$ is singular, then its nullspace is 1-dimensional.
\end{Lem}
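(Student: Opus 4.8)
The plan is to reduce both assertions to a single rank statement: for two \emph{similar} matrices $X, Y \in M_3(F)$ whose common characteristic polynomial is irreducible, the difference $Y - X$ can never have rank exactly $1$. Granting this, Part~1 is immediate: since $A, B \in \mcC(m)$ are similar with irreducible characteristic polynomial $m$, while $A \ne B$ forces $B - A \ne 0$ and singularity forces $\mathrm{rank}(B-A) \le 2$, the excluded value $1$ leaves $\mathrm{rank}(B-A) = 2$, so $\mfn(B-A)$ is $1$-dimensional.

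To prove the rank statement I would suppose $Y - X = pq$ with $p$ a nonzero column and $q$ a nonzero row, and exploit that $X$ and $Y$ share a characteristic polynomial $\chi$. The matrix determinant lemma gives the polynomial identity $\det(tI - Y) = \det(tI - X) - q\,\mathrm{adj}(tI - X)\,p$ in $F[t]$, so equality of characteristic polynomials yields $q\,\mathrm{adj}(tI - X)\,p \equiv 0$. Writing $\chi(x) = x^3 + a_1 x^2 + a_2 x + a_3$, Cayley--Hamilton gives $\mathrm{adj}(tI - X) = t^2 I + t(X + a_1 I) + (X^2 + a_1 X + a_2 I)$; comparing the coefficients of $t^2, t^1, t^0$ and back-substituting collapses the identity to the three scalar equations $qp = qXp = qX^2 p = 0$. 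Because $\chi$ is irreducible, $F[X]$ is a field and $F^3$ is a $1$-dimensional $F[X]$-space, so the nonzero row vector $q$ is cyclic and $q, qX, qX^2$ form a basis of $F^3$ (the same reasoning used in the proof of Lemma~\ref{lem:E_A,v}). The three equations then say that $p$ is annihilated by a basis of functionals, forcing $p = 0$, a contradiction.

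For Part~2 the first observation is that $M := B^2 - A^2 - \lambda(B - A) = g(B) - g(A)$ with $g(x) = x^2 - \lambda x$. Since $A$ and $B$ are conjugate, so are $g(A)$ and $g(B)$, and their common characteristic polynomial is irreducible: if $\alpha$ denotes the image of $x$ in $F[A] \cong F[x]/(m)$, then $g(\alpha) \notin F$ (otherwise $\alpha$ would satisfy the quadratic $t^2 - \lambda t - g(\alpha)$ over $F$, contradicting $[F(\alpha):F] = 3$), so $g(A)$ again generates a cubic field and is nonderogatory. Applying the rank statement to $X = g(A)$, $Y = g(B)$ rules out $\mathrm{rank}(M) = 1$.

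The remaining—and most delicate—point, which does \emph{not} reduce to the rank statement, is to exclude $M = 0$. Here I would argue that if $g(A) = g(B) =: N$, then $A$ and $B$ both centralize $N$; as $N$ is nonderogatory, its centralizer is the field $F[N]$, so $A, B \in F[N]$ are two roots of the quadratic $z^2 - \lambda z - N$ over that field, whence $A \ne B$ forces $A + B = \lambda I$, i.e.\ $B = \lambda I - A$. Feeding $B \in \mcC(m)$ back in gives $m(\lambda I - A) = 0$, so $A$ is a root of $m(\lambda - z)$; since $m$ is the minimal polynomial of $A$ and both cubics are monic up to sign, this forces the identity $m(\lambda - z) = -m(z)$, which is impossible: in characteristic $\ne 2$ it makes $\lambda/2 \in F$ a root of $m$, and in characteristic $2$ it forces $\lambda = 0$ and hence $B = A$. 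Thus $M \ne 0$, and together with the exclusion of rank $1$ we obtain $\mathrm{rank}(M) = 2$, so $\mfn(M)$ is $1$-dimensional. The main obstacle is precisely this rank-$0$ analysis, since the clean determinant-lemma argument controls only the rank-$1$ case while the rank-$0$ case is genuinely characteristic-sensitive.
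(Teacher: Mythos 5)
Your proof is correct, and its skeleton matches the paper's: both arguments reduce everything to the fact that the difference of two similar matrices with irreducible characteristic polynomial cannot have rank $1$, and both handle the second assertion by observing that $A^2-\lambda A$ is nonscalar, hence generates the field $F[A]$ and has irreducible cubic characteristic polynomial, so the same rank fact applies to $g(A)=A^2-\lambda A$ and $g(B)=B^2-\lambda B$. The differences are twofold. First, where the paper simply cites Marques de S\'a's theorem on the rank of the difference of similar matrices, you prove the rank-$1$ exclusion from scratch via the matrix determinant lemma, deducing $qp=qXp=qX^2p=0$ and invoking cyclicity of $q$; this is a correct and pleasantly self-contained substitute for the citation. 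Second, you explicitly rule out the degenerate case $B^2-\lambda B=A^2-\lambda A$ with $A\ne B$, which the paper's proof passes over in silence (its appeal to the cited theorem only controls the nonzero singular case, and if the difference were zero the nullspace would be $3$-dimensional, not $1$-dimensional). Your argument for this --- that $B$ would then lie in the field $F[A]$, forcing $B=\lambda I-A$ by Vieta and hence the impossible identity $m(\lambda-z)=-m(z)$, treated separately in characteristic $2$ --- is correct and genuinely fills a step the paper leaves implicit. The only cost of your route is length; what it buys is independence from the external reference and an airtight treatment of the rank-$0$ case.
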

\begin{proof}
Assume that $B-A$ is not invertible. Since $B$ and $A$ are similar and $m$ is irreducible, \cite[Thm.\ 2.2]{Marq} indicates that the rank of $B-A$ does not equal 1. Since $B-A$ is a nonzero $3 \times 3$ singular matrix, its rank must be 2. Hence, its nullspace has dimension one. 

Next, note that $F[A]/F$ is a degree 3 field extension of $F$. For each $\lambda \in F$, $A^2-\lambda A$ is not a scalar matrix (because if so, then $A$ would satisfy a quadratic polynomial from $F[x]$), so $A^2-\lambda A$ generates $F[A]$ over $F$. Thus, the minimal polynomial of $A^2-\lambda A$ is an irreducible cubic. Since $B^2-\lambda B$ is similar to $A^2 -\lambda A$, another appeal to \cite[Thm.\ 2.2]{Marq} shows that when $(B^2-\lambda B) - (A^2 - \lambda A) = (B^2-A^2) - \lambda(B-A)$ is not invertible, it has a 1-dimensional nullspace.
\end{proof}

\begin{Prop}\label{prop:nullspaceint}
Let $A, B \in \mcC(m)$, where $m(x) = x^3 + ax^2 +bx +c$, such that $A \ne B$ and $B-A$ is singular.
\begin{enumerate}[(1)]
\item For all $\lambda \in F$, $\mfn(B-A)\cdot(A+a+\lambda) = \mfn(B^2-A^2-\lambda(B-A))$. In particular, $B^2-A^2-\lambda(B-A)$ is singular.
\item For all $\lambda \in F$, $(B+a+\lambda)\cdot \mfn_r(B-A) = \mfn_r(B^2-A^2-\lambda(B-A))$.
\item $\mfn(B-A) \cap \mfn(B^2-A^2)=\{0\}$ and $\mfn_r(B-A) \cap \mfn_r(B^2-A^2)=\{0\}$.
\item $\dim \mfN_{AB} = 2$, and for any nonzero $v \in \mfn(B-A)$, $\{v, v(A+a)\}$ is a basis for $\mfN_{AB}$.
\end{enumerate}
\end{Prop}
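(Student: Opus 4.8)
The plan is to derive all four parts from a single ``degree-lowering'' identity for left nullvectors (which handles parts (1), (3), (4)) together with its right-handed analogue (for part (2)). The identity is the following: if $v \in \mfn(B-A)$, so that $vB = vA$, then for every $\lambda \in F$,
\[ v\bigl(B^2 - A^2 - \lambda(B-A)\bigr) = vA\,(B-A). \]
This follows by writing $B^2 - A^2 = B(B-A) + (B-A)A$, so that $v(B-A)A$ vanishes since $v(B-A)=0$, the term $vB(B-A)$ equals $vA(B-A)$ since $vB=vA$, and the correction $\lambda v(B-A)=0$. The right-hand side $vA(B-A)$ is nonzero: otherwise $vA$ would lie in the $1$-dimensional space $\mfn(B-A)=\langle v\rangle$ (Lemma \ref{lem:1dimnull}), forcing $vA = \mu v$ for some scalar, i.e.\ making $v$ a left eigenvector of $A$, which is impossible since $m$ is irreducible.

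For part (1), I would fix a nonzero $v$ spanning $\mfn(B-A)$ and set $w := v(A+a+\lambda) = vA + (a+\lambda)v$, noting first that $w \neq 0$, since $w=0$ would again exhibit $v$ as a left eigenvector of $A$. Writing $M := B^2-A^2-\lambda(B-A)$, I split $wM = (vA)M + (a+\lambda)\,vM$. The second summand equals $(a+\lambda)\,vA(B-A)$ by the identity above. For the first, the key computation is that $(vA)(B^2-A^2) = vB^3 - vA^3 = -a\,v(B^2-A^2) = -a\,vA(B-A)$, where the middle equality uses the minimal polynomial relations $m(A)=m(B)=0$ (the $b$- and $c$-terms cancel because $vB=vA$) and the last uses the identity once more. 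Hence $(vA)M = -a\,vA(B-A) - \lambda\,vA(B-A) = -(a+\lambda)\,vA(B-A)$, so the two summands of $wM$ cancel and $wM=0$. Thus $M$ is singular (this is the ``in particular'' clause), so by Lemma \ref{lem:1dimnull} its left nullspace is $1$-dimensional; since it contains the nonzero vector $w$ spanning $\mfn(B-A)(A+a+\lambda)$, the two $1$-dimensional spaces coincide.

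Part (2) is the mirror image: for $z \in \mfn_r(B-A)$ one has $Bz=Az$, and from $B^2-A^2=(B-A)B+A(B-A)$ one gets $Mz=(B-A)Az$; running the same argument on columns with $y:=(B+a+\lambda)z$ (the factor $B+a+\lambda$, rather than $A+a+\lambda$, appears because it now sits on the left) yields $My=0$ and $y\neq0$, and Lemma \ref{lem:1dimnull} again forces equality. Parts (3) and (4) then follow quickly: setting $\lambda=0$ in (1) gives $\mfn(B^2-A^2)=\mfn(B-A)(A+a)=\langle v(A+a)\rangle$, so a nonzero vector in $\mfn(B-A)\cap\mfn(B^2-A^2)$ would make $v$ and $v(A+a)$ parallel, i.e.\ $v$ a left eigenvector of $A$ --- impossible; the right-hand intersection is handled identically via (2). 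Since $\mfn(B-A)$ and $\mfn(B^2-A^2)$ are each $1$-dimensional with trivial intersection, $\mfN_{AB}$ is their internal direct sum, of dimension $2$, with basis $\{v, v(A+a)\}$. The main obstacle is the bookkeeping in the part-(1) computation: one must invoke the minimal polynomials of both $A$ and $B$ at once and verify that only the $x^2$-coefficient $a$ survives the cancellation --- this is exactly what dictates the shift $A+a+\lambda$ (rather than $A+\lambda$) appearing in the statement.
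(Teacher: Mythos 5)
Your proof is correct and follows essentially the same route as the paper's: both hinge on showing $v(A+a+\lambda)\bigl(B^2-A^2-\lambda(B-A)\bigr)=0$ via the relations $m(A)=m(B)=0$, then upgrade the containment to equality using the $1$-dimensionality of the nullspaces from Lemma \ref{lem:1dimnull}, and deduce (3) and (4) by the same eigenvector/invariant-subspace contradiction. The only difference is organizational --- you factor the computation through the identity $vM = vA(B-A)$ where the paper expands the product directly from $B^3-A^3 = A(B^2-A^2)+(B-A)B^2$ --- which is a cosmetic rearrangement of the same cancellation.
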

\begin{proof} 

For (1), note that for any matrices $A$ and $B$, we have $B^3-A^3 = A(B^2-A^2) + (B-A)B^2$. Since $A, B \in \mcC(m)$, we know that $m(A)=m(B)=0$. So, $B^3-A^3 = -a(B^2-A^2)-b(B-A)$, and
\begin{equation}\label{power relation}
-a(B^2-A^2)-b(B-A) = A(B^2-A^2) + (B-A)B^2.
\end{equation}
Given $v \in \mfn(B-A)$ and multiplying both sides of \eqref{power relation} on the left by $v$ produces 
\begin{equation}\label{power relation 2}
-av(B^2-A^2) = vA(B^2-A^2).
\end{equation}
Now, let $\lambda \in F$ and consider $v(A+a+\lambda)(B^2-A^2-\lambda(B-A))$. Note that $v(A+a+\lambda)$ is nonzero, because $A+a+\lambda \in F[A]$ is invertible. Using \eqref{power relation 2} and the fact that $v(B-A) = 0$, we obtain
\begin{align*}
v(A+a+\lambda)&(B^2-A^2-\lambda(B-A)) \\
&= vA(B^2-A^2) - \lambda vA(B-A) + (a+\lambda)v(B^2-A^2) - (a+\lambda)\lambda v(B-A)\\
&= -av(B^2-A^2) - \lambda vA(B-A) + (a+\lambda)v(B^2-A^2)\\
&= -\lambda v(A(B-A) - (B^2-A^2))\\
&= -\lambda v (A-B)B = 0.
\end{align*}
This shows that $B^2-A^2-\lambda(B-A)$ is singular, and furthermore that $\mfn(B-A) \cdot (A+a+\lambda) \subseteq \mfn(B^2-A^2-\lambda(B-A))$.  Equality of the subspaces now follows from the fact that they are both 1-dimensional.

For (2), note that $-a(B^2-A^2)-b(B-A)=B^3-A^3 = (B^2-A^2)B + A^2(B-A)$. Hence,
\begin{equation*}
-a(B^2-A^2)w = (B^2-A^2)Bw
\end{equation*}
for any $w \in \mfn_r(B-A)$. The proof now proceeds as in part (1).

For (3), let $\mcN = \mfn(B-A) \cap \mfn(B^2-A^2)$, and suppose that $\mcN \ne \{0\}$. By Lemma \ref{lem:1dimnull}, both $\mfn(B-A)$ and $\mfn(B^2-A^2)$ are 1-dimensional. Hence, $\mcN = \mfn(B-A)=\mfn(B^2-A^2)$. But then, by part (1), $\mcN$ is an invariant subspace of $A+a$, which contradicts the fact that $A+a$ has an irreducible characteristic polynomial. The same argument shows that $\mfn_r(B-A) \cap \mfn_r(B^2-A^2) = \{0\}$. 

Finally, (4) follows from Lemma \ref{lem:1dimnull}, part (3), and part (1).
\end{proof}

Part (1) of Proposition \ref{prop:nullspaceint} shows that if $B-A$ is not invertible, then $B^2-A^2-\lambda(B-A)$ is not invertible for any $\lambda \in F$. In fact, the following strong converse holds.

\begin{Cor}\label{cor:inv diff}
For all $A, B \in \mcC(m)$, the following are equivalent.
\begin{enumerate}[(i)]
\item $B-A$ is singular.
\item For all $\lambda \in F$, $B^2-A^2 - \lambda(B-A)$ is singular.
\item There exists $\lambda \in F$ such that $B^2-A^2 - \lambda(B-A)$ is singular.
\end{enumerate}
\end{Cor}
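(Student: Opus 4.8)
The plan is to prove the cycle of implications (i) $\Rightarrow$ (ii) $\Rightarrow$ (iii) $\Rightarrow$ (i). The case $A = B$ is trivial, since then $B - A = 0$ and every matrix $B^2 - A^2 - \lambda(B-A)$ vanishes, so all three conditions hold; I would therefore assume $A \ne B$ throughout. For (i) $\Rightarrow$ (ii) there is nothing new to do: this is exactly the ``in particular'' clause of Proposition \ref{prop:nullspaceint}(1), which already shows that singularity of $B-A$ forces $B^2 - A^2 - \lambda(B-A)$ to be singular for every $\lambda \in F$. The implication (ii) $\Rightarrow$ (iii) is immediate, since $F$ is nonempty and I may take any $\lambda \in F$.

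The substance is (iii) $\Rightarrow$ (i), and the key idea is to exploit a symmetry by swapping the roles of $A$ and the matrix $A' := A^2 - \lambda A$. Fix $\lambda \in F$ with $B^2 - A^2 - \lambda(B-A)$ singular, and set $A' := A^2 - \lambda A$ and $B' := B^2 - \lambda B$, so that $B' - A' = B^2 - A^2 - \lambda(B-A)$ is singular by hypothesis. As in the proof of Lemma \ref{lem:1dimnull}, $A'$ has an irreducible cubic minimal polynomial $m'$ and generates $F[A]$; since $A$ and $B$ are similar via some invertible $P$ (that is, $B = P^{-1}AP$), conjugation gives $B' = P^{-1}A'P$, so $A', B' \in \mcC(m')$ with $A' \ne B'$. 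Now I would apply the already-established implication (i) $\Rightarrow$ (ii) to the pair $A', B'$ (legitimate because $B'-A'$ is singular): for every $\mu \in F$, the matrix $B'^2 - A'^2 - \mu(B'-A')$ is singular.

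It remains to recognize $B - A$ as a matrix of this last form. Because $F[A'] = F[A]$ has $F$-basis $\{I, A', A'^2\}$, I can write $A = g_0 I + g_1 A' + g_2 A'^2$ uniquely, and conjugating by $P$ yields $B = g_0 I + g_1 B' + g_2 B'^2$ with the \emph{same} coefficients; hence $B - A = g_1(B'-A') + g_2(B'^2 - A'^2)$. A short check rules out $g_2 = 0$: that would express $A$ as a degree-$\le 1$ polynomial in $A' = A^2 - \lambda A$, i.e.\ a quadratic relation satisfied by $A$, contradicting $\deg \mu_A = 3$. With $g_2 \ne 0$, setting $\mu := -g_1/g_2$ gives $B - A = g_2\bigl(B'^2 - A'^2 - \mu(B'-A')\bigr)$, which is singular by the previous paragraph; thus $B - A$ is singular, proving (i).

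The main obstacle I anticipate is justifying that $A$ and $B$ are expressed by the \emph{same} polynomial in $A'$ and $B'$ respectively, so that the coefficients $g_1, g_2$ transfer, together with confirming $g_2 \ne 0$. Both hinge on the facts, drawn from Lemma \ref{lem:1dimnull}, that $A' = A^2 - \lambda A$ generates the cubic field extension $F[A]$ and that $A'$ is a genuinely quadratic (not merely linear) expression in $A$; once these are in hand, the reduction to the pair $A', B'$ makes the converse direction fall out of the implication (i) $\Rightarrow$ (ii) applied in reverse roles.
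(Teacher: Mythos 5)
Your proof is correct, but the route for (iii) $\Rightarrow$ (i) is genuinely different from the paper's. The paper argues directly: given a nonzero $w$ with $w(B^2-A^2-\lambda(B-A))=0$, it exhibits the explicit element $U = A^2-\lambda A + \lambda(a+\lambda)+b$ of the field $F[A]$ (necessarily invertible, since $m$ is irreducible of degree $3$) and verifies by computation with $m(A)=m(B)=0$ that $wU(B-A)=0$; in effect $U$ is, up to a scalar, the inverse of the multiplier $A+a+\lambda$ appearing in Proposition \ref{prop:nullspaceint}(1), so the paper inverts that map concretely. You instead bootstrap the already-proven direction: passing to $A'=A^2-\lambda A$ and $B'=B^2-\lambda B$, which lie in $\mcC(m')$ for an irreducible cubic $m'$, you apply Proposition \ref{prop:nullspaceint}(1) to that pair and then recover $B-A$ as $g_2\bigl(B'^2-A'^2-\mu(B'-A')\bigr)$ via the change of basis $\{I,A',A'^2\}$ of $F[A]$, with $g_2\ne 0$ because $\deg\mu_A=3$. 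Your approach buys conceptual clarity (the equivalence is a symmetry between the two generators $A$ and $A'$ of the same cubic field, with no ad hoc formula to verify), at the cost of having to check that the transported coefficients agree under conjugation and of invoking the proposition for a polynomial other than the fixed $m$ --- which is harmless, since its proof is uniform in the irreducible cubic. One small point to shore up: you assert $A'\ne B'$ without justification before citing Proposition \ref{prop:nullspaceint}(1), whose hypotheses include it. This costs nothing: if $A'=B'$, then conjugating the expansion $A=g_0I+g_1A'+g_2A'^2$ by $P$ gives $B=g_0I+g_1B'+g_2B'^2=A$, contradicting $A\ne B$ (equivalently, your identity $B-A=g_1(B'-A')+g_2(B'^2-A'^2)$ would force $B=A$). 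Add that sentence and the argument is complete.
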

\begin{proof}
(i) $\Rightarrow$ (ii) is Proposition \ref{prop:nullspaceint}(1), and (ii) $\Rightarrow$ (iii) is obvious. So, it suffices to prove (iii) $\Rightarrow$ (i). Let $\lambda \in F$ such that $B^2-A^2 - \lambda(B-A)$ is not invertible. Then, there exists a nonzero vector $w \in F^3$ such that $w(B^2-A^2 - \lambda(B-A))=0$. 

Let $U = A^2-\lambda A + \lambda(a+\lambda) + b$. Then, $U \ne 0$, because if so, then $A$ would satisfy the quadratic polynomial $x^2 - \lambda x + \lambda(a+\lambda) + b$. However, this is impossible because $m$ is irreducible. So, $U \ne 0$, and since $U$ is an element of the field $F[A]$, it is invertible. Let $v = wU$, which is a nonzero element of $F^3$. We will show that $v(B-A) = 0$.

Certainly, $v(B-A) = wUB - wUA$. By assumption, $w(B^2-A^2 - \lambda (B-A))=0$, which means that $w(A^2-\lambda A)=w(B^2-\lambda B)$. So,
\begin{align*}
wUB &= w(A^2-\lambda A + \lambda(a+\lambda) + b)B\\ &= w(B^2-\lambda B + \lambda(a+\lambda) + b)B\\
&= w(B^3-\lambda B^2 + (\lambda(a+\lambda) + b)B).
\end{align*}
Since $m(B)=0$, we have $B^3=-aB^2-bB-c$. Substituting this into the last expression and simplifying, we obtain
\begin{equation}\label{w eq 1}
wUB = w((-a-\lambda)(B^2-\lambda B) - c).
\end{equation}
The same steps will show that
\begin{equation}\label{w eq 2}
wUA = w((-a-\lambda)(A^2-\lambda A) - c).
\end{equation}
From \eqref{w eq 1}, \eqref{w eq 2}, and the equation $w(A^2-\lambda A)=w(B^2-\lambda B)$, we see that $wUB=wUA$. Hence, $v(B-A)=0$ and $B-A$ is not invertible.
\end{proof}

Next, we show that the subspace $\mfN_{AB}$ defined prior to Lemma \ref{lem:1dimnull} is closely related to the row spaces of $B-A$ and $B^2-A^2$. For $X \in M_3(F)$, let $\row(X)$ denote the row space of $X$.

\begin{Lem}\label{lem:Rspacelem}
Let $A, B \in \mcC(m)$ such that $A \ne B$ and $B-A$ is singular. Let $\mcR$ be the intersection of $\row(B-A)$ and $\row(B^2-A^2)$. 
\begin{enumerate}[(1)]
\item $\dim \mcR = 1$.
\item $\mfN_{AB}\cdot(B-A) = \mcR= \mfN_{AB}\cdot(B^2-A^2)$.
\item Let $v \in F^3$. If $v(B-A) \in \row(B^2-A^2)$, then $v \in \mfN_{AB}$. Likewise, if $v(B^2-A^2) \in \row(B-A)$, then $v \in \mfN_{AB}$.
\end{enumerate}
\end{Lem}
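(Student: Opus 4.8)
The plan is to reduce all three parts to a single auxiliary vector. Fix a nonzero $v \in \mfn(B-A)$; by Proposition~\ref{prop:nullspaceint}(4), $\{v, v(A+a)\}$ is a basis of $\mfN_{AB}$, with $v$ spanning $\mfn(B-A)$ and $v(A+a)$ spanning $\mfn(B^2-A^2)$. Set $r := v(B^2-A^2)$. Almost everything will follow once I show that $r$ spans $\mcR$ and simultaneously equals the images $\mfN_{AB}\cdot(B-A)$ and $\mfN_{AB}\cdot(B^2-A^2)$.

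For part (1), I would first note that both $B-A$ and $B^2-A^2$ have rank $2$: each is nonzero and singular (singularity of $B^2-A^2$ is Proposition~\ref{prop:nullspaceint}(1) with $\lambda=0$), and by Lemma~\ref{lem:1dimnull} neither has a two-dimensional left nullspace, so each has rank exactly $2$. Thus $\row(B-A)$ and $\row(B^2-A^2)$ are $2$-dimensional subspaces of $F^3$ and $\dim\mcR \ge 2+2-3 = 1$. To rule out equality I would use that, under the standard row--column pairing, the right nullspace $\mfn_r(X)$ is the annihilator of $\row(X)$; hence $\row(B-A)=\row(B^2-A^2)$ would force $\mfn_r(B-A)=\mfn_r(B^2-A^2)$, two nonzero (one-dimensional) subspaces whose intersection is $\{0\}$ by Proposition~\ref{prop:nullspaceint}(3), a contradiction. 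Therefore the row spaces are distinct and $\dim\mcR=1$.

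The crux is the identity behind part (2). Writing $B^2-A^2=(B-A)B+A(B-A)$ and using $v(B-A)=0$ gives
\[
r = v(B^2-A^2) = vA(B-A) = v(A+a)(B-A),
\]
the last step again because $v(B-A)=0$. Since $v(B-A)=0$ and $v(A+a)(B^2-A^2)=0$ (as $v(A+a)\in\mfn(B^2-A^2)$), the basis $\{v,v(A+a)\}$ yields $\mfN_{AB}\cdot(B-A)=\mathrm{span}\{v(A+a)(B-A)\}=\mathrm{span}\{r\}$ and $\mfN_{AB}\cdot(B^2-A^2)=\mathrm{span}\{v(B^2-A^2)\}=\mathrm{span}\{r\}$. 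From its two expressions, $r\in\row(B^2-A^2)$ and $r\in\row(B-A)$, so $r\in\mcR$; and $r\ne 0$, since $r=0$ would place $v$ in $\mfn(B-A)\cap\mfn(B^2-A^2)=\{0\}$, contradicting Proposition~\ref{prop:nullspaceint}(3). As $\dim\mcR=1$, this gives $\mathrm{span}\{r\}=\mcR$ and hence part (2).

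For part (3) I would run a rank count. Consider $T\colon F^3\to F^3/\row(B^2-A^2)$, $T(u)=u(B-A)+\row(B^2-A^2)$, whose kernel is exactly $\{u: u(B-A)\in\row(B^2-A^2)\}$. By part (2), $\mfN_{AB}\cdot(B-A)=\mcR\subseteq\row(B^2-A^2)$, so $\mfN_{AB}\subseteq\ker T$; and since $\row(B-A)\ne\row(B^2-A^2)$ the map $T$ is nonzero, so its image fills the one-dimensional target and $\dim\ker T=2=\dim\mfN_{AB}$. Thus $\ker T=\mfN_{AB}$, the first assertion of (3); the second follows by interchanging $B-A$ and $B^2-A^2$. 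I expect the main obstacle to be the bookkeeping in part (1)---keeping left and right nullspaces straight and applying Proposition~\ref{prop:nullspaceint}(3) to the right nullspaces---since, once the identity $r=v(A+a)(B-A)=v(B^2-A^2)$ is established, parts (2) and (3) are essentially formal.
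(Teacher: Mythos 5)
Your proof is correct and follows essentially the same route as the paper's: the same rank/annihilator argument for (1), and the same key identity (you use $B^2-A^2=(B-A)B+A(B-A)$ where the paper uses the equivalent $(A+a)(B-A)=B^2-A^2-(B-A)(B-a)$) to show $\mfN_{AB}\cdot(B-A)=\mcR=\mfN_{AB}\cdot(B^2-A^2)$. The only real difference is in part (3), where you run a rank--nullity count on $u\mapsto u(B-A)+\row(B^2-A^2)$ in place of the paper's explicit decomposition $w=w_1+w_2$ with $w_1\in\mfn(B-A)$ and $w_2\in\mfn(B^2-A^2)$; both are valid and equally short.
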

\begin{proof}
(1) By Lemma \ref{lem:1dimnull}, both $B-A$ and $B^2-A^2$ have 1-dimensional nullspaces. Hence, both matrices have 2-dimensional rowspaces. So, $\dim \mcR$ is either 1 or 2. If $\dim \mcR = 2$, then $\row(B-A) = \row(B^2-A^2)$. But then, the right nullspaces (being orthogonal to the row spaces) of $B-A$ and $B^2-A^2$ would be equal. This contradicts Proposition \ref{prop:nullspaceint} (3). So, $\dim \mcR = 1$.

(2) It is clear that $\mfN_{AB} \cdot (B-A) \subseteq \row(B-A)$, so to prove that $\mfN_{AB} \cdot (B-A) = \mcR$, it suffices to show that $\mfN_{AB} \cdot (B-A) \subseteq \row(B^2-A^2)$. We have
\begin{equation*}
\mfN_{AB} \cdot (B-A) = \mfn(B^2-A^2) \cdot (B-A) = \mfn(B-A) \cdot (A+a)(B-A),
\end{equation*}
where the second equality follows from Proposition \ref{prop:nullspaceint}(1). Now, one my check that
\begin{equation*}
(A+a)(B-A) = B^2-A^2 - (B-A)(B-a).
\end{equation*}
Since $\mfn(B-A) \cdot (B-A) = 0$, we get $\mfn(B-A) \cdot (A+a)(B-A) = \mfn(B-A) \cdot (B^2-A^2)$. Thus, $\mfN_{AB} \cdot (B-A) = \mfn(B-A) \cdot (B^2-A^2) \subseteq \row(B^2-A^2)$, and we conclude that $\mfN_{AB} \cdot (B-A) = \mcR$. 

The proof that $\mfN_{AB} \cdot (B^2-A^2) = \mcR$ is similar, and it is enough to show that $\mfN_{AB} \cdot (B^2-A^2) \subseteq \row(B-A)$. Since $\mfN_{AB}\cdot (B^2-A^2) = \mfn(B-A)\cdot (B^2-A^2)$ and $B^2-A^2 = A(B-A)+(B-A)B$, we obtain
\begin{equation*}
\mfN_{AB} \cdot (B^2-A^2) = \mfn(B-A) \cdot (A(B-A)) \subseteq \row(B-A),
\end{equation*}
as desired.

(3) Let $v \in F^3$, let $x=v(B-A)$, and assume that $x \in \row(B^2-A^2)$. Then, $x \in \mcR$, so by (2) $x= w(B-A)$ for some $w \in \mfN_{AB}$. Let $w = w_1 + w_2$, where $w_1 \in \mfn(B-A)$ and $w_2 \in \mfn(B^2-A^2)$. Then, $x=w(B-A)=w_2(B-A)$. Thus, $v-w_2 \in \mfn(B-A)$, which means that $v \in \mfN_{AB}$. The proof of the other claim is similar.
\end{proof}

We can now characterize those matrices $A, B, C \in \mcC(m)$ with pairwise singular differences that produce a singular block Vandermonde matrix.

\begin{Thm}\label{thm:singular case}
Let $A, B, C \in \mcC(m)$ be distinct and such that both $B-A$ and $C-A$ are singular. Then, the following are equivalent.
\begin{enumerate}[(i)]
\item $\mcV(A,B,C)$ is not invertible.
\item $\mcV_2:=\begin{bmatrix} B-A & C-A\\ B^2-A^2 & C^2-A^2 \end{bmatrix}$ is not invertible.
\item $\mfN_{AB}=\mfN_{AC}$.
\item $\mfn(B-A) = \mfn(C-A)$.
\item There exists a nonzero $v \in F^3$ such that $vA=vB=vC$.
\end{enumerate}
\end{Thm}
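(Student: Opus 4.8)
The plan is to prove the chain of equivalences by first disposing of the cheap equivalences and then isolating the single substantive implication. The equivalence (i) $\Leftrightarrow$ (ii) is immediate from the block row reduction carried out just before the statement, which gives $\det \mcV(A,B,C) = \det \mcV_2$; hence $\mcV(A,B,C)$ is singular exactly when $\mcV_2$ is. The equivalence (iv) $\Leftrightarrow$ (v) is a direct unwinding of definitions: since $A,B,C$ are distinct with $B-A$ and $C-A$ singular, Lemma \ref{lem:1dimnull} makes $\mfn(B-A)$ and $\mfn(C-A)$ one-dimensional, so a nonzero $v$ with $vA=vB=vC$ is precisely a nonzero common element of these two lines, forcing them to coincide, and conversely any nonzero vector spanning the common line works.

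For (iii) $\Leftrightarrow$ (iv) I would use the explicit basis from Proposition \ref{prop:nullspaceint}(4): if $v$ spans $\mfn(B-A)$ then $\mfN_{AB} = \langle v, v(A+a)\rangle$, and the same holds for $C$ with the \emph{same} operator $A+a$. The direction (iv) $\Rightarrow$ (iii) is then immediate, since a common spanning vector yields a common space. For (iii) $\Rightarrow$ (iv) the key point is that $\mfn(B-A)$ can be recovered intrinsically from $\mfN_{AB}$ and $A$ alone: it is the unique line $L \subseteq \mfN_{AB}$ with $L(A+a) \subseteq \mfN_{AB}$. Uniqueness follows because $v(A+a)^2 \notin \mfN_{AB}$ — otherwise $\mfN_{AB}$ would be a $2$-dimensional $(A+a)$-invariant subspace, contradicting the irreducibility of the characteristic polynomial of $A+a$ — so writing $x = \gamma v + \delta v(A+a)$ and demanding $x(A+a)\in\mfN_{AB}$ forces $\delta=0$. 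As this description uses only $\mfN_{AB}$ and the fixed matrix $A$, the hypothesis $\mfN_{AB}=\mfN_{AC}$ yields $\mfn(B-A)=\mfn(C-A)$.

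This leaves (v) $\Rightarrow$ (ii), which is trivial — a nonzero $v$ with $vA=vB=vC$ makes the row vector $\begin{bmatrix} v & 0\end{bmatrix}$ a nonzero left null vector of $\mcV_2$ — and the substantive implication (ii) $\Rightarrow$ (iii), where the real work lies. Here I would take a nonzero left null vector $\begin{bmatrix} u & w\end{bmatrix}$ of $\mcV_2$, so $u(B-A)+w(B^2-A^2)=0$ and $u(C-A)+w(C^2-A^2)=0$. The first relation gives $u(B-A)=-w(B^2-A^2)$, so $u(B-A)\in\row(B^2-A^2)$ and $w(B^2-A^2)\in\row(B-A)$; Lemma \ref{lem:Rspacelem}(3) then traps $u,w\in\mfN_{AB}$, and symmetrically $u,w\in\mfN_{AC}$. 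Arguing by contradiction, suppose $\mfN_{AB}\ne\mfN_{AC}$, so these two planes meet in a single line $\langle z\rangle$ and $u=\alpha z$, $w=\beta z$. If $\beta=0$, then $u$ is a nonzero common element of $\mfn(B-A)$ and $\mfn(C-A)$. If $\beta\ne 0$, then with $\lambda=-\alpha/\beta$ the null equations become $z\bigl((B^2-A^2)-\lambda(B-A)\bigr)=0$ and $z\bigl((C^2-A^2)-\lambda(C-A)\bigr)=0$, and Proposition \ref{prop:nullspaceint}(1) identifies these nullspaces as $\mfn(B-A)(A+a+\lambda)$ and $\mfn(C-A)(A+a+\lambda)$; since $A+a+\lambda$ is an invertible element of the field $F[A]$, the vector $z(A+a+\lambda)^{-1}$ is a nonzero common element of $\mfn(B-A)$ and $\mfn(C-A)$. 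In either case $\mfn(B-A)=\mfn(C-A)$, whence $\mfN_{AB}=\mfN_{AC}$ by (iv) $\Rightarrow$ (iii), contradicting the assumption; thus (iii) holds. The resulting loop (ii) $\Rightarrow$ (iii) $\Leftrightarrow$ (iv) $\Leftrightarrow$ (v) $\Rightarrow$ (ii), together with (i) $\Leftrightarrow$ (ii), closes all five equivalences.

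The main obstacle I anticipate is precisely (ii) $\Rightarrow$ (iii): unlike the other steps it genuinely needs both the row-space machinery of Lemma \ref{lem:Rspacelem}(3), to confine the null-vector coordinates to $\mfN_{AB}\cap\mfN_{AC}$, and the $\lambda$-parametrized form of Proposition \ref{prop:nullspaceint}(1) combined with the invertibility of $A+a+\lambda$, to transfer a single shared relation back to the two original nullspaces simultaneously. The care-requiring point is the reduction to the one-dimensional intersection and the split on whether $\beta=0$, since a shared null vector with $u,w$ in ``different summand positions'' must be ruled out unless the two planes already coincide.
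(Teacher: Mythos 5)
Your proposal is correct and follows essentially the same route as the paper: the same reductions for (i) $\Leftrightarrow$ (ii), (iv) $\Leftrightarrow$ (v), and (iv) $\Rightarrow$ (iii), and the same use of Lemma \ref{lem:Rspacelem}(3) together with Proposition \ref{prop:nullspaceint}(1) and the invertibility of $A+a+\lambda$ for the hard implication, which the paper runs as the contrapositive (ii) $\Rightarrow$ (iv) and you run as (ii) $\Rightarrow$ (iii) by contradiction. The only cosmetic differences are your intrinsic characterization of $\mfn(B-A)$ as the unique $(A+a)$-stable line in $\mfN_{AB}$ (the paper instead computes coefficients directly in the field $F[A]$) and your explicit $\beta=0$ case, which cleanly covers a degenerate situation the paper passes over when writing $v_1=\lambda v_2$.
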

\begin{proof} (i) $\Leftrightarrow$ (ii) Perform row operations on $\mcV(A,B,C)$ as described at the start of Section \ref{singular section}. 

(iv) $\Leftrightarrow$ (v) By Lemma \ref{lem:1dimnull}, both $\mfn(B-A)$ and $\mfn(C-A)$ are 1-dimensional, so (v) implies (iv). The other implication is clear.

(iv) $\Rightarrow$ (iii) Assume that $\mfn(B-A) = \mfn(C-A)$ and let $v \in \mfn(B-A)$ be nonzero. By Proposition \ref{prop:nullspaceint}(4), $\{v, v(A+a)\}$ is a basis for $\mfN_{AB}$. The same is true for $\mfn(C-A)$ and $\mfN_{AC}$. Thus, $\mfN_{AB} = \mfN_{AC}$.

(iii) $\Rightarrow$ (iv) Assume that $\mfN_{AB} = \mfN_{AC}$. Let $v$ be a basis vector for $\mfn(B-A)$ and let $w$ be a basis vector for $\mfn(C-A)$. By Proposition \ref{prop:nullspaceint}(4), $\{v, v(A+a)\}$ (respectively, $\{w, w(A+a)\}$) is a basis for $\mfN_{AB}$ (respectively, $\mfN_{AC}$). Since $\mfN_{AB}=\mfN_{AC}$, both $w$ and $w(A+a)$ are in $\mfN_{AB}$. Let $c_1, c_2, d_1, d_2 \in F$ be such that $w = c_1 v + c_2 v(A+a)$ and $w(A+a) = d_1 v + d_2 v(A+a)$. Then, $c_1v(A+a) + c_2v(A+a)^2 = d_1v + d_2v(A+a)$, which implies that
\begin{equation}\label{quad relation}
v\big(c_2(A+a)^2+(c_1-d_2)(A+a)-d_1\big) = 0.
\end{equation}
Let $A'= c_2(A+a)^2+(c_1-d_2)(A+a)-d_1$, which is an element of $F[A]$. Since $F[A]$ is a field, either $A'$ is invertible or $A'=0$. From \eqref{quad relation} and the fact that $v \ne 0$, we conclude that $A'=0$. Thus, $c_2=0$, $d_1=0$, and $c_1=d_2$. So, $w=c_1 v$ and hence $\mfn(B-A)=\mfn(C-A)$.

(v) $\Rightarrow$ (i) Let $v$ be nonzero and such that $vA=vB=vC$. Let $w=vA$. Then, 
\begin{equation*}
\begin{bmatrix} -w&v&0\end{bmatrix}\cdot \mcV(A,B,C) = \begin{bmatrix} 0&0&0 \end{bmatrix},
\end{equation*} so $\mcV(A,B,C)$ is not invertible.

(ii) $\Rightarrow$ (iv) We prove the contrapositive. Assume that $\mfn(B-A) \ne \mfn(C-A)$. Then, since these are linear subspaces of $F^3$, $\mfn(B-A) \cap \mfn(C-A) = \{0\}$. Furthermore, $\mfN_{AB} \ne \mfN_{AC}$. But, both $\mfN_{AB}$ and $\mfN_{AC}$ are 2-dimensional subspaces of $F^3$, so $\dim(\mfN_{AB} \cap \mfN_{AC})=1$.

Let $v_1, v_2 \in F^3$ be such that $\begin{bmatrix}v_1 & v_2 \end{bmatrix} \cdot \mcV_2 = 0$, (here, $\begin{bmatrix}v_1 & v_2 \end{bmatrix}$ is the $1 \times 6$ matrix formed from $v_1$ and $v_2$). We wish to show that $v_1=v_2=0$. The fact that $\begin{bmatrix}v_1 & v_2 \end{bmatrix} \cdot \mcV_2 = 0$ means $v_1(B-A) = -v_2(B^2-A^2)$ and $v_1(C-A) = -v_2(C^2-A^2)$. By Lemma \ref{lem:Rspacelem}(3), both $v_1$ and $v_2$ are in $\mfN_{AB} \cap \mfN_{AC}$. Since this subspace is 1-dimensional, $v_1 = \lambda v_2$ for some $\lambda \in F$. This means that $v_2$ lies in both $\mfn(B^2-A^2-\lambda(B-A))$ and $\mfn(C^2-A^2-\lambda(C-A))$. By Lemma \ref{lem:1dimnull}, both of these nullspaces are 1-dimensional, so either they are equal or their intersection is $\{0\}$.

Suppose that $\mfn(B^2-A^2-\lambda(B-A)) = \mfn(C^2-A^2-\lambda(C-A))$. By Proposition \ref{prop:nullspaceint}(1), $\mfn(B-A) \cdot (A+a+\lambda) = \mfn(C-A) \cdot (A+a+\lambda)$, which means that $\mfn(B-A)=\mfn(C-A)$, a contradiction. So, the intersection of $\mfn(B^2-A^2-\lambda(B-A))$ and $\mfn(C^2-A^2-\lambda(C-A))$ must be the zero space. Thus, $v_1=v_2=0$, and $\mcV_2$ is invertible.
\end{proof}

We can use Theorem \ref{thm:singular case} and Lemma \ref{lem:E_A,v} to give a sufficient condition for certain subsets of $\mcC(m)$ to be core. Recall from Definition \ref{def:E_A,v} that for $A \in \mcC(m)$ and a nonzero $v \in F^3$, $E_{A,v}$ is defined to be $E_{A,v} = \{ B \in \mcC(m) : vB=vA\}$.

\begin{Lem}\label{lem:Eav intersection}
Let $A \in \mcC(m)$ and let $v, w \in F^3$ be nonzero.
\begin{enumerate}[(1)]
\item $E_{A,v}=E_{A,w}$ if and only if $v$ and $w$ are scalar multiples.
\item If $E_{A,v} \ne E_{A,w}$, then $E_{A,v} \cap E_{A,w} = \{A\}$.
\end{enumerate}
\end{Lem}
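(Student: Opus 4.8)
The plan is to build both parts on a single observation drawn from Lemma~\ref{lem:1dimnull}: if $B \in E_{A,v}$ and $B \ne A$, then the left nullspace $\mfn(B-A)$ is exactly the line of scalar multiples of $v$. Indeed, $B \in E_{A,v}$ means $v(B-A)=0$, so the nonzero vector $v$ lies in $\mfn(B-A)$; since $B \ne A$ forces $B-A$ to be a nonzero singular matrix, Lemma~\ref{lem:1dimnull} gives $\dim \mfn(B-A)=1$, and hence $\mfn(B-A)=\langle v\rangle$. I would establish this first, since everything else is a short consequence.

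For part (1), the backward direction is immediate: if $w=\lambda v$ with $\lambda \ne 0$, then $wB=wA \iff \lambda v B=\lambda vA \iff vB=vA$, so $E_{A,v}=E_{A,w}$. For the forward direction, assume $E_{A,v}=E_{A,w}$. The one point requiring care is that $E_{A,v}$ really contains a matrix other than $A$; to get this I would invoke Lemma~\ref{lem:E_A,v} to transport $E_{A,v}$ by conjugation to $E_{C,e_1}$ for the companion matrix $C$ of $m$, and then note that $E_{C,e_1}$ -- the matrices of $\mcC(m)$ with first row $[0\;0\;-c]$ -- contains at least two elements by Lemma~\ref{lem:counting mats} (for instance the companion matrix itself and the matrix obtained by changing the parameter $x_2$ from $0$ to $1$). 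Thus $|E_{A,v}|\ge 2$ over any field, and when $F=\F_q$ this is already guaranteed by Lemma~\ref{lem:E_A,v}(3). Fixing $B \in E_{A,v}$ with $B \ne A$, the opening observation gives $\mfn(B-A)=\langle v\rangle$; since $E_{A,v}=E_{A,w}$ also places $B$ in $E_{A,w}$, we get $w \in \mfn(B-A)=\langle v\rangle$, so $w$ is a scalar multiple of $v$.

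For part (2), I would first note that $A$ always lies in $E_{A,v}\cap E_{A,w}$, so $\{A\}\subseteq E_{A,v}\cap E_{A,w}$. For the reverse inclusion, assume $E_{A,v}\ne E_{A,w}$; by part (1) this says precisely that $v$ and $w$ are linearly independent. If some $B\ne A$ belonged to $E_{A,v}\cap E_{A,w}$, then both $v$ and $w$ would lie in $\mfn(B-A)$, which is one-dimensional by the opening observation -- impossible for two independent vectors. Hence $E_{A,v}\cap E_{A,w}=\{A\}$.

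The substantive ingredient is Lemma~\ref{lem:1dimnull}; granting it, the proof is a dimension count. The only genuine obstacle is the nonemptiness of $E_{A,v}\setminus\{A\}$ needed in part (1), which is why I would route the existence argument through Lemma~\ref{lem:E_A,v} and Lemma~\ref{lem:counting mats} rather than attempting to construct a second matrix directly.
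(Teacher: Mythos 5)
Your proof is correct and follows essentially the same route as the paper: both parts reduce to the observation that for $B \in E_{A,v}$ with $B \ne A$, the nonzero singular matrix $B-A$ has a one-dimensional left nullspace by Lemma~\ref{lem:1dimnull}, forcing any two nonzero vectors annihilating it on the left to be proportional. The one difference is that you explicitly verify $E_{A,v}\setminus\{A\}\ne\emptyset$ (via Lemmas~\ref{lem:E_A,v} and~\ref{lem:counting mats}), a point the paper's proof of part~(1) takes for granted when it writes ``let $B \in E_{A,v}$ such that $B \ne A$''; your care here is warranted and the justification you give is valid over any field.
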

\begin{proof}
For (1), assume that $E_{A,v} = E_{A,w}$ and let $B \in E_{A,v}$ such that $B \ne A$. Then, both $v$ and $w$ are in $\mfn(B-A)$. By Lemma \ref{lem:1dimnull}, $\mfn(B-A)$ is 1-dimensional, so $v$ and $w$ are scalar multiples. The converse is clear.

For (2), assume that $E_{A,v} \cap E_{A,w}$ contains some matrix $B \ne A$. Then, $v, w \in \mfn(B-A)$, and hence are scalar multiples. By part (1), $E_{A,v} = E_{A,w}$.
\end{proof}

\begin{Cor}\label{cor:singular case}
Let $S \subseteq \mcC(m)$. 
\begin{enumerate}[(1)]
\item If there exist distinct $A, B, C \in S$ such that $B-A$ is singular, $C-A$ is singular, and $\mfn(B-A) \ne \mfn(C-A)$, then $S$ is core.

\item If $F = \F_q$, $|S| \ge q^3-q^2+1$, and there exists $A \in S$ such that $B-A$ is singular for all $B \in S$, then $S$ is core.

\end{enumerate}
\end{Cor}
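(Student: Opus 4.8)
The plan is to treat the two parts separately, using Theorem \ref{thm:singular case} for part (1) and reducing part (2) to part (1) by a counting argument.

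For part (1), I would invoke Theorem \ref{thm:singular case} directly. Since $A$, $B$, $C$ are distinct with both $B-A$ and $C-A$ singular, all hypotheses of that theorem are in place, so its conditions (i)--(v) are equivalent. The assumption $\mfn(B-A) \ne \mfn(C-A)$ is precisely the negation of condition (iv), hence condition (i) fails as well; that is, $\mcV(A,B,C)$ is invertible. By Lemma \ref{lem:Vandy}, the existence of three matrices in $S$ whose block Vandermonde matrix is invertible forces $S$ to be core.

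For part (2), the goal is to produce three matrices satisfying the hypotheses of part (1). By assumption $B-A$ is singular for every $B \in S$, so I only need two matrices $B, C \in S \setminus \{A\}$ with $\mfn(B-A) \ne \mfn(C-A)$. I would argue by contradiction: suppose instead that $\mfn(B-A) = \mfn(C-A)$ for all such $B$ and $C$, and call this common line $\langle v \rangle$ for a fixed nonzero $v \in F^3$. Then $v(B-A) = 0$, i.e.\ $vB = vA$, for every $B \in S$ (including $A$ itself, trivially), so $S \subseteq E_{A,v}$. But Lemma \ref{lem:E_A,v}(3) gives $|E_{A,v}| = q^3 - q^2$, contradicting $|S| \ge q^3 - q^2 + 1$. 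Hence two matrices $B, C \in S \setminus \{A\}$ with $\mfn(B-A) \ne \mfn(C-A)$ must exist; note $B \ne C$ since their associated nullspaces differ, so $A$, $B$, $C$ are distinct and part (1) applies to conclude that $S$ is core.

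Neither part involves a serious obstacle once the earlier machinery is in hand; the only point requiring care is the bookkeeping in part (2). Specifically, I must confirm that the failure of the nullspace condition really forces every element of $S$ into a single set $E_{A,v}$, which hinges on Lemma \ref{lem:1dimnull} guaranteeing that each $\mfn(B-A)$ is exactly one-dimensional, so that ``equal nullspaces'' means ``spanned by the same vector $v$''. I must also verify that the three matrices extracted are genuinely distinct, so that the hypotheses of Theorem \ref{thm:singular case} (applied through part (1)) are legitimately met.
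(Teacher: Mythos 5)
Your proposal is correct and follows essentially the same route as the paper: part (1) is the direct application of Theorem \ref{thm:singular case} plus Lemma \ref{lem:Vandy}, and part (2) rests on the same counting fact $|E_{A,v}| = q^3-q^2$ from Lemma \ref{lem:E_A,v}. The only cosmetic difference is that you negate condition (iv) of Theorem \ref{thm:singular case} by contradiction, whereas the paper explicitly constructs $B$ and $C$ and rules out condition (v) via Lemma \ref{lem:Eav intersection}; the two are equivalent by that theorem.
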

\begin{proof}
For (1), if such $A$, $B$, and $C$ exist, then by Theorem \ref{thm:singular case} the Vandermonde matrix $\mcV(A,B,C)$ is invertible. Hence, $S$ is core by Lemma \ref{lem:Vandy}. For (2), assume that $F=\F_q$, $|S| \ge q^3-q^2+1$, and that $S$ contains the desired matrix $A$. Choose $B \in S \setminus\{A\}$. Then, $\mfn(B-A) \ne \{0\}$, so there exists a nonzero $v \in \F_q^3$ such that $B \in E_{A,v}$. By Lemma \ref{lem:E_A,v}, $|E_{A,v}|=q^3-q^2$, so there exists $C \in S$ such that $C \notin E_{A,v}$. 

Suppose that $w$ is a nonzero vector such that $wA=wB=wC$. Then, $B \in E_{A,v} \cap E_{A,w}$, so by Lemma \ref{lem:Eav intersection}, $v$ and $w$ must be scalar multiples and $E_{A,v} = E_{A,w}$. Hence, $C \in E_{A,v}$, a contradiction. We conclude that such a vector $w$ does not exist. Therefore, by Theorem \ref{thm:singular case}, $\mcV(A,B,C)$ is invertible and $S$ is core by Lemma \ref{lem:Vandy}.
\end{proof}

\section{Mixed Case}\label{mixed section}

Recall the notation and exposition given at the start of Section \ref{singular section}. In this section, we will consider the matrices $\mcV_1$ and $\mcV_2$ under the assumption that one of $B-A$ or $C-A$ is invertible, and the other is not. As we will show, in this situation the Vandermonde matrix $\mcV(A,B,C)$ is always invertible. Hence, any subset $S \subseteq \mcC(m)$ containing such matrices $A$, $B$, and $C$ is core.

\begin{Lem}\label{lem:mixed}
Let $A, B \in \mcC(m)$ such that $B-A$ is invertible. Let $a_1, a_2, a_3 \in F$, and let
\begin{equation*}
M:= a_1(B-A) + a_2(A+a)(B-A) - a_2(B^2-A^2) + a_3(A+a)(B^2-A^2).
\end{equation*}
Then, either $M$ is invertible or $M=0$. Furthermore, $M=0$ if and only if $a_1=a_2=a_3=0$.
\end{Lem}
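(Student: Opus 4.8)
The plan is to factor $M$ as a product of two matrices, each of which is individually either invertible or zero, by pushing the factor $B-A$ to the left of every summand. Two of the four terms, namely $a_1(B-A)$ and $a_2(A+a)(B-A)$, already carry $B-A$ as a factor, so the real work is to rewrite the remaining pair $-a_2(B^2-A^2) + a_3(A+a)(B^2-A^2)$ in a compatible form. The main obstacle is precisely that $B^2-A^2$ and $(A+a)(B^2-A^2)$ do not obviously have $B-A$ as a left factor, and non-commutativity blocks the naive identity $B^2-A^2 = (B+A)(B-A)$.

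To overcome this I would first record two algebraic identities. An elementary expansion gives
\[
(A+a)(B-A) - (B^2-A^2) = (B-A)(a-B),
\]
which lets me combine the two $a_2$-terms. The second identity is the crucial one and is where the hypothesis $A,B \in \mcC(m)$ enters: since $m(A)=m(B)=0$ we have $B^3-A^3 = -a(B^2-A^2) - b(B-A)$, and comparing this with $B^3-A^3 = A(B^2-A^2) + (B-A)B^2$, then solving for $A(B^2-A^2)$ and adding $a(B^2-A^2)$, yields
\[
(A+a)(B^2-A^2) = -(B-A)(B^2+b).
\]
This is the same power relation exploited in the proof of Proposition \ref{prop:nullspaceint}.

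Substituting both identities into $M$ collapses every term into a left multiple of $B-A$, giving
\[
M = (B-A)\big[a_1 + a_2(a-B) - a_3(B^2+b)\big].
\]
Writing $N := a_1 + a_2(a-B) - a_3(B^2+b) = (a_1 + a_2 a - a_3 b) - a_2 B - a_3 B^2$, the point is that $N \in F[B]$. Because $m$ is irreducible, $F[B] \cong F[x]/(m)$ is a field, so $N$ is either $0$ or invertible. As $B-A$ is invertible by hypothesis, $M = (B-A)N$ is invertible exactly when $N \neq 0$ and equals $0$ exactly when $N=0$, which proves the first assertion.

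Finally, to pin down when $M=0$, I would use that $\{I, B, B^2\}$ is an $F$-basis of $F[B]$: the minimal polynomial of $B$ has degree $3$, so these powers are linearly independent over $F$. Reading off the coefficients of $B^2$, $B$, and $I$ in $N$ forces $a_3=0$, then $a_2=0$, and finally $a_1=0$. Hence $M=0$ if and only if $a_1=a_2=a_3=0$, completing the argument.
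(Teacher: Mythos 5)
Your proof is correct and follows essentially the same route as the paper: both factor $M=(B-A)N$ with $N=-a_3B^2-a_2B+(a_1+aa_2-ba_3)\in F[B]$, using the relation $m(A)=m(B)=0$ to pull $B-A$ out of $(A+a)(B^2-A^2)$, and then invoke that $F[B]$ is a field with $\{I,B,B^2\}$ linearly independent. The only difference is cosmetic (you derive the key identity from the power relation $B^3-A^3=-a(B^2-A^2)-b(B-A)$ rather than by direct substitution of $aB^2=-B^3-bB-c$), so there is nothing to add.
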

\begin{proof}
Let $M_1 := a_1(B-A)$, $M_2:= a_2(A+a)(B-A) - a_2(B^2-A^2)$, and $M_3:=a_3(A+a)(B^2-A^2)$. We will first show that each of these three matrices is left divisible by $B-A$. This is clear for $M_1$.

For $M_2$, we have
\begin{align*}
M_2 &= a_2\big((A+a)(B-A)-(B^2-A^2)\big)\\
&= a_2\big(AB-A^2+a(B-A)-B^2+A^2\big)\\
&= a_2(B-A)(-B+a).
\end{align*}
Next, for $M_3$,
\begin{equation*}
M_3 = a_3(A+a)(B^2-A^2) = a_3(AB^2 - A^3 + aB^2 - aA^2).
\end{equation*}
Since $A, B \in \mcC(m)$, we know that $m(A)=m(B)=0$. So, $aB^2 = -B^3-bB-c$ and $-A^3-aA^2 = bA+c$, which yields
\begin{align*}
M_3 &= a_3(AB^2 - A^3 + aB^2 - aA^2)\\
&= a_3(AB^2- B^3-bB-c +bA+c)\\
&= a_3\big((B-A)(-B^2) + (B-A)(-b)\big)\\
&= a_3(B-A)(-B^2-b).
\end{align*}
Let $M_0 = -a_3B^2 - a_2B+a_1+aa_2-ba_3$. Then,
\begin{align*}
M &= M_1+M_2+M_3\\
&= (B-A)(a_1 + a_2(-B+a)+a_3(-B^2-b))\\
&= (B-A)M_0.
\end{align*}
Since $B-A$ is invertible, this shows that $M$ is invertible if and only if $M_0$ is invertible, and $M=0$ if and only if $M_0=0$. Note that $M_0$ is an element of $F[B]$, and $F[B]$ is a field because $m$ is irreducible. So, $M_0$ is either invertible or is 0. Moreover, $M_0 = 0$ if and only if $a_3$, $a_2$, and $a_1+ aa_2-ba_3$ are all 0, which occurs if and only if $a_1=a_2=a_3=0$.
\end{proof}

\begin{Thm}\label{thm:mixed case}
Let $A, B, C \in \mcC(m)$ be distinct. If $B-A$ is invertible and $C-A$ is singular, then $\mcV(A,B,C)$ is invertible.
\end{Thm}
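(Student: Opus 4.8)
The plan is to reduce, exactly as indicated at the start of Section \ref{singular section}, to showing that the $6 \times 6$ matrix $\mcV_2 = \begin{bmatrix} B-A & C-A \\ B^2-A^2 & C^2-A^2\end{bmatrix}$ is invertible, since $\det \mcV(A,B,C) = \det \mcV_2$. To do this I would prove that the left nullspace of $\mcV_2$ is trivial: suppose $v_1, v_2 \in F^3$ satisfy $\begin{bmatrix} v_1 & v_2 \end{bmatrix} \mcV_2 = 0$, equivalently $v_1(B-A) + v_2(B^2-A^2) = 0$ and $v_1(C-A) + v_2(C^2-A^2) = 0$, and aim to conclude $v_1 = v_2 = 0$. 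The whole argument is driven by the singular difference $C-A$, for which the structural results of Section \ref{singular section} are available.

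First I would exploit the second equation. Since $v_1(C-A) = -v_2(C^2-A^2) \in \row(C^2-A^2)$ and $v_2(C^2-A^2) = -v_1(C-A) \in \row(C-A)$, Lemma \ref{lem:Rspacelem}(3) applied to the pair $A, C$ shows that both $v_1$ and $v_2$ lie in $\mfN_{AC}$. Fixing a nonzero $v \in \mfn(C-A)$, Proposition \ref{prop:nullspaceint}(4) supplies the basis $\{v, v(A+a)\}$ of $\mfN_{AC}$, so I may write $v_1 = \alpha v + \beta v(A+a)$ and $v_2 = \gamma v + \delta v(A+a)$ for scalars $\alpha, \beta, \gamma, \delta \in F$.

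The key step is to extract a relation among these coefficients from the second equation. Using $v(C-A) = 0$, the identity $(A+a)(C-A) = (C^2-A^2) - (C-A)(C-a)$ (as in the proof of Lemma \ref{lem:Rspacelem}(2)), and the fact that $v(A+a) \in \mfn(C^2-A^2)$ by Proposition \ref{prop:nullspaceint}(1), the equation $v_1(C-A) + v_2(C^2-A^2) = 0$ collapses to $(\beta + \gamma)\, v(C^2-A^2) = 0$. Since $v \ne 0$ and $\mfn(C-A) \cap \mfn(C^2-A^2) = \{0\}$ by Proposition \ref{prop:nullspaceint}(3), the vector $v(C^2-A^2)$ is nonzero, forcing $\gamma = -\beta$.

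It is precisely this relation $\gamma = -\beta$ that makes Lemma \ref{lem:mixed} applicable, and recognizing this is the crux of the argument. Substituting into the first equation and collecting terms gives
\[
v\big(\alpha(B-A) + \beta(A+a)(B-A) - \beta(B^2-A^2) + \delta(A+a)(B^2-A^2)\big) = 0,
\]
whose bracketed factor is exactly the matrix $M$ of Lemma \ref{lem:mixed} with $a_1 = \alpha$, $a_2 = \beta$, $a_3 = \delta$. By that lemma $M$ is either invertible or zero; since the nonzero vector $v$ annihilates it on the left, $M$ cannot be invertible, so $M = 0$ and hence $\alpha = \beta = \delta = 0$. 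Then $\gamma = -\beta = 0$ as well, so $v_1 = v_2 = 0$, $\mcV_2$ is invertible, and therefore so is $\mcV(A,B,C)$. I expect the main obstacle to be seeing that the singular equation is exactly what pins down the coefficient relation $\gamma = -\beta$ needed to match the rigid form of $M$ in Lemma \ref{lem:mixed}; once that is in hand, the remainder is routine bookkeeping with the basis of $\mfN_{AC}$.
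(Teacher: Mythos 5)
Your proposal is correct and follows essentially the same route as the paper's proof: reduce to $\mcV_2$, use Lemma \ref{lem:Rspacelem}(3) to place $v_1, v_2$ in $\mfN_{AC}$, extract the relation $\gamma = -\beta$ from the singular column, and invoke Lemma \ref{lem:mixed} to force $M = 0$. The only differences are notational (your $\alpha, \beta, \gamma, \delta$ versus the paper's $c_1, c_2, d_1, d_2$, and phrasing the coefficient relation via $v(C^2-A^2)$ rather than the equal vector $v(A+a)(C-A)$).
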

\begin{proof}
Let $\mcV_2 = \begin{bmatrix} B-A & C-A\\ B^2-A^2 & C^2-A^2 \end{bmatrix}$. As described at the beginning of Section \ref{singular section}, $\mcV(A,B,C)$ is invertible if and only if $\mcV_2$ is invertible. Let $v_1, v_2 \in F^3$ be such that $\begin{bmatrix}v_1 & v_2 \end{bmatrix} \mcV_2= 0$. It suffices to show that $v_1=v_2=0$. Since $\begin{bmatrix}v_1 & v_2 \end{bmatrix} \mcV_2= 0$, we have $v_1(C-A) =-v_2(C^2-A^2)$. By Lemma \ref{lem:Rspacelem}(3), both $v_1$ and $v_2$ are in $\mfN_{AC}$.

Let $v \in \mfn(C-A)$ be nonzero. By Proposition \ref{prop:nullspaceint}, $v(A+a) \in \mfn(C^2-A^2)$ and $\{v, v(A+a)\}$ is a basis of $\mfN_{AC}$. So, $v_1 = c_1 v + c_2v(A+a)$ and $v_2= d_1 v + d_2 v(A+a)$ for some $c_1, c_2, d_1, d_2 \in F$. Since $v(C-A)=0$ and $v(A+a)(C^2-A^2)=0$, we get 
\begin{equation}\label{mixed eq 1}
0 = v_1(C-A) +v_2(C^2-A^2) = c_2v(A+a)(C-A)+d_1v(C^2-A^2).
\end{equation}
Now, $C^2-A^2 = (A+a)(C-A) + (C-A)(C-a)$, and $v \in \mfn(C-A)$, so $v(C^2-A^2) = v(A+a)(C-A)$. Thus, \eqref{mixed eq 1} becomes $0 = (c_2+d_1)v(A+a)(C-A)$. Since $v(A+a) \notin \mfn(C-A)$, this means that $d_1=-c_2$.

Let $M$ be the following matrix:
\begin{equation*}
M:= c_1(B-A) + c_2(A+a)(B-A) - c_2(B^2-A^2) + d_2(A+a)(B^2-A^2).
\end{equation*}
Then, the equation $v_1(B-A) + v_2(B^2-A^2)=0$ (which holds because $\begin{bmatrix}v_1&v_2\end{bmatrix}\mcV_2=0$) is equivalent to $vM=0$. By Lemma \ref{lem:mixed}, $M$ is either invertible or is 0. Because $v \ne 0$, $M$ must equal 0. Hence, $c_1=c_2=d_2=0$. It follows that $v_1=v_2=0$. Thus, $\mcV_2$---and therefore $\mcV(A,B,C)$---is invertible.
\end{proof}

\begin{Cor}\label{cor:mixed case core}
Let $S \subseteq \mcC(m)$. 
\begin{enumerate}[(1)]
\item If there exist distinct $A, B, C \in S$ such that $B-A$ is invertible and $C-A$ is singular, then $S$ is core. 
\item If there exist distinct $A, B, C \in S$ such that $\mfn(B-A) \ne \mfn(C-A)$, then $S$ is core.
\item If $F=\F_q$, then there exist core subsets of $\mcC(m)$ of cardinality $n$ for all $3 \le n \le (q^3-q^2)(q^3-q)$.
\end{enumerate}
\end{Cor}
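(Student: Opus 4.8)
The plan is to obtain parts (1) and (2) immediately from results already in hand, and then to prove part (3) by a persistence argument for an invertible Vandermonde witness.

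For part (1) I would simply note that the hypotheses are precisely those of Theorem \ref{thm:mixed case}, so $\mcV(A,B,C)$ is invertible, and Lemma \ref{lem:Vandy} then gives that $S$ is core. For part (2) I would argue by cases on the invertibility of $B-A$ and $C-A$. Since a $3\times 3$ matrix $X$ is invertible exactly when $\mfn(X)=\{0\}$, the hypothesis $\mfn(B-A)\ne\mfn(C-A)$ precludes both differences being invertible. If exactly one of $B-A,C-A$ is invertible, part (1) applies after possibly interchanging $B$ and $C$. If both are singular, then (as $A,B,C$ are distinct) Lemma \ref{lem:1dimnull} shows $\mfn(B-A)$ and $\mfn(C-A)$ are one-dimensional, and they are distinct by hypothesis, so Corollary \ref{cor:singular case}(1) applies. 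In each case $S$ is core.

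Part (3) is the substantive claim. The key observation is that core-ness witnessed by an invertible block Vandermonde matrix is inherited by supersets: if $A,B,C\in S$ with $\mcV(A,B,C)$ invertible and $S\subseteq S'\subseteq\mcC(m)$, then $A,B,C\in S'$ and Lemma \ref{lem:Vandy} makes $S'$ core as well. Hence it suffices to exhibit a single triple $A,B,C\in\mcC(m)$ with $\mcV(A,B,C)$ invertible, and then, for each $n$ with $3\le n\le(q^3-q^2)(q^3-q)=|\mcC(m)|$, to form $S_n:=\{A,B,C\}\cup X$ where $X\subseteq\mcC(m)\setminus\{A,B,C\}$ has $n-3$ elements. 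Since $|\mcC(m)|=(q^3-q^2)(q^3-q)$ by \eqref{eq:Cmsize}, there are always enough matrices available to reach any such $n$, the set $S_n$ has cardinality exactly $n$, and it is core because it contains the witnessing triple.

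To build the witness I would fix any $A\in\mcC(m)$, choose two non-proportional vectors $v,w\in\F_q^3$, and use Lemma \ref{lem:E_A,v}(3) to get $|E_{A,v}|=|E_{A,w}|=q^3-q^2\ge 2$ for every prime power $q\ge 2$; this lets me pick $B\in E_{A,v}\setminus\{A\}$ and $C\in E_{A,w}\setminus\{A\}$. By Lemma \ref{lem:Eav intersection}(2) we have $E_{A,v}\cap E_{A,w}=\{A\}$, so $A,B,C$ are distinct, and from $v(B-A)=0$, $w(C-A)=0$ together with Lemma \ref{lem:1dimnull} we get $\mfn(B-A)=\langle v\rangle\ne\langle w\rangle=\mfn(C-A)$. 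Theorem \ref{thm:singular case} then forces $\mcV(A,B,C)$ to be invertible, completing the construction. The only delicate point is the existence of this witness---namely that $E_{A,v}$ and $E_{A,w}$ are large enough to furnish $B\ne A$ and $C\ne A$---which is exactly where the bound $q^3-q^2\ge 2$ and the near-disjointness of Lemma \ref{lem:Eav intersection} are used; the remainder is elementary counting of set sizes.
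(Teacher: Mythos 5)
Your proofs of parts (1) and (2) coincide with the paper's: (1) is immediate from Theorem \ref{thm:mixed case} and Lemma \ref{lem:Vandy}, and (2) is the same three-way case split on the invertibility of $B-A$ and $C-A$, handing off to part (1) or to Corollary \ref{cor:singular case}(1). For part (3) your argument is correct but your witness triple is built differently. The paper takes $B=A^q$ (the Frobenius conjugate of $A$, a nonzero element of the field $\F_q[A]$, hence with $B-A$ invertible) together with any $C\in E_{A,v}\setminus\{A\}$, so its triple is certified core by the mixed case (part (1)); you instead take $B\in E_{A,v}\setminus\{A\}$ and $C\in E_{A,w}\setminus\{A\}$ for non-proportional $v,w$, so both differences are singular with distinct one-dimensional nullspaces, and the triple is certified core via Theorem \ref{thm:singular case} (equivalently Corollary \ref{cor:singular case}(1)). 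Both constructions rest on the same persistence principle --- an invertible Vandermonde witness inside a set is inherited by every superset in $\mcC(m)$ --- and your counting via Lemma \ref{lem:E_A,v}(3) and Lemma \ref{lem:Eav intersection}(2) correctly guarantees that $A$, $B$, $C$ are distinct and that enough matrices remain to pad up to any cardinality $n\le(q^3-q^2)(q^3-q)$. The paper's Frobenius trick is marginally slicker in that it produces an explicit invertible difference with no counting at all, but your route is equally valid and arguably uses less machinery from Section \ref{mixed section}.
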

\begin{proof}
(1) This follows from Theorem \ref{thm:mixed case} and Lemma \ref{lem:Vandy}. 

(2) Assume that such matrices $A$, $B$, and $C$ exist. If both $B-A$ and $C-A$ are invertible, then $\mfn(B-A) = \{0\} = \mfn(C-A)$, a contradiction. So, either both $B-A$ and $C-A$ are singular, or exactly one of $B-A$ or $C-A$ is singular and the other is invertible. In the former case, $S$ is core by Corollary \ref{cor:singular case}, and in the latter case $S$ is core by part (1).

(3) Assume that $F=\F_q$. Fix $A \in \mcC(m)$. Take $B=A^q$. Since $B$ is the image of $A$ under the Frobenius automorphism on $\F_q[A]$, $m(B)=0$, and hence $B \in \mcC(m)$. Since $B-A$ is a nonzero element of $\F_q[A]$, it is invertible. Next, take $C$ to be any matrix in one of the sets $E_{A,v} \setminus \{A\}$. Then, $C-A$ is singular. Hence, $\{A, B, C\}$---and any subset of $\mcC(m)$ containing this set---is core.
\end{proof}

\section{Invertible Differences}\label{invertible section}

It remains to consider the case of $\mcV(A,B,C)$ where both $B-A$ and $C-A$ are invertible. Since both $B$ and $C$ are conjugates of $A$, we are thus led to the study of similar matrices whose difference is invertible. This, in turn, has a close connection to invertible matrix commutators, a topic that has attracted interest in the past \cite{GurLan, KhuranaLam, Marq, MartinsSilva1, MartinsSilva2, Uhlig}. For matrices $X, Y \in M_3(F)$, the additive commutator (or Lie bracket) of $X$ and $Y$ is $[X, Y] = XY-YX$. When $U$ is invertible, we have $UAU^{-1} - A = (UA-AU)U^{-1}$. Thus, we obtain the fundamental relationship
\begin{equation}\label{sim and comm}
UAU^{-1} - A \text{ is invertible if and only if } [U,A] \text{ is invertible}.
\end{equation}

\begin{Lem}\label{lem:inv vandy lemma}
Let $A, B, C \in \mcC(m)$ such that both $B-A$ and $C-A$ are invertible. Let $U_1, U_2 \in \GL(3,F)$ be such that $B=U_1 A U_1^{-1}$ and $C=U_2 A U_2^{-1}$. The following are equivalent.
\begin{enumerate}[(i)]
\item $\mcV(A,B,C)$ is invertible.
\item $C^2-A^2 - (B^2-A^2)(B-A)^{-1}(C-A)$ is invertible.
\item $[U_2,A]A[U_2,A]^{-1} - [U_1,A]A[U_1,A]^{-1}$ is invertible.
\end{enumerate}
\end{Lem}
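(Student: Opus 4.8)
The plan is to prove (i) $\Leftrightarrow$ (ii) by block row reduction and (ii) $\Leftrightarrow$ (iii) by a direct algebraic simplification that rewrites the Schur complement appearing in (ii) as a product involving the commutator expression in (iii). Throughout I would abbreviate $K_1 := [U_1,A]$ and $K_2 := [U_2,A]$, both of which are invertible by \eqref{sim and comm} since $B-A$ and $C-A$ are assumed invertible.

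For (i) $\Leftrightarrow$ (ii), recall from the start of Section \ref{singular section} that $\mcV(A,B,C)$ is invertible if and only if $\mcV_2 = \begin{bmatrix} B-A & C-A \\ B^2-A^2 & C^2-A^2 \end{bmatrix}$ is invertible. Since $B-A$ is invertible, I would left-multiply $\mcV_2$ by the block-unipotent matrix $\begin{bmatrix} I & 0 \\ -(B^2-A^2)(B-A)^{-1} & I \end{bmatrix}$, which has determinant $1$; this clears the lower-left block and puts the product in block upper-triangular form with diagonal blocks $B-A$ and the Schur complement $C^2-A^2 - (B^2-A^2)(B-A)^{-1}(C-A)$. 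Hence $\det \mcV_2 = \det(B-A)\cdot\det(\text{Schur complement})$, and since $B-A$ is invertible, $\mcV_2$ is invertible exactly when the Schur complement is, giving (i) $\Leftrightarrow$ (ii).

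The crux is (ii) $\Leftrightarrow$ (iii). Using $B = U_1 A U_1^{-1}$ (so that $B^2 = U_1 A^2 U_1^{-1}$) I would compute $B - A = K_1 U_1^{-1}$ and, via the identity $[U_1,A^2] = K_1 A + A K_1$, that $B^2 - A^2 = (K_1 A + A K_1)U_1^{-1}$. Multiplying yields the clean formula $(B^2-A^2)(B-A)^{-1} = K_1 A K_1^{-1} + A = [U_1,A]A[U_1,A]^{-1} + A$, and the same computation for $C$ gives $(C^2-A^2)(C-A)^{-1} = [U_2,A]A[U_2,A]^{-1} + A$. Substituting $C^2 - A^2 = \big([U_2,A]A[U_2,A]^{-1}+A\big)(C-A)$ into the Schur complement, the two copies of $A(C-A)$ cancel and the expression factors as
\[
C^2-A^2 - (B^2-A^2)(B-A)^{-1}(C-A) = \big([U_2,A]A[U_2,A]^{-1} - [U_1,A]A[U_1,A]^{-1}\big)(C-A).
\]
Since $C-A$ is invertible, the left side is invertible if and only if the bracketed factor is, which is precisely condition (iii).

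The only real obstacle is spotting the factorization in this last step: the substantive observation is that $(B^2-A^2)(B-A)^{-1}$ collapses to the conjugate $[U_1,A]A[U_1,A]^{-1}$ plus $A$, after which the common summand $A$ is exactly what makes the $A(C-A)$ terms cancel, so that the invertible factor $C-A$ can be peeled off on the right. Everything else is routine block-matrix algebra, and no case analysis on the singularity of the differences is needed, since both $B-A$ and $C-A$ are invertible in this case.
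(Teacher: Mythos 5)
Your proposal is correct and follows essentially the same route as the paper: block row reduction reduces (i) to the invertibility of the Schur complement in (ii), and the identity $(B^2-A^2)(B-A)^{-1} = [U_1,A]A[U_1,A]^{-1} + A$ (which the paper derives from $B^2-A^2 = A(B-A)+(B-A)B$ rather than from $[U_1,A^2]=K_1A+AK_1$, an equivalent computation) lets one peel off the invertible factor $C-A$ to get (iii). The only cosmetic difference is that you factor $C-A$ out on the right while the paper multiplies $X$ by $(C-A)^{-1}$; the substance is identical.
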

\begin{proof}
Let $\mcV:=\mcV(A,B,C)$ and $X := C^2-A^2-(B^2-A^2)(B-A)^{-1}(C-A)$. As in Section \ref{singular section}, we may perform block row operations on $\mcV$ to obtain $\mcV_2:=\begin{bmatrix}B-A&C-A\\B^2-A^2&C^2-A^2\end{bmatrix}$, and $\mcV$ is invertible if and only if $\mcV_2$ is invertible. Since $B-A$ is invertible, we may further row reduce $\mcV_2$ to produce the matrix
\begin{equation*}
\mcV_3:=\begin{bmatrix}I&(B-A)^{-1}(C-A)\\0&X\end{bmatrix}.
\end{equation*}
Then, $\mcV$ is invertible if and only if $\mcV_3$ is invertible, which holds if and only if $X$ is invertible.

Next, note that $B^2-A^2 = A(B-A) + (B-A)B$. This means that
\begin{equation*}
(B^2-A^2)(B-A)^{-1} = A + (B-A)B(B-A)^{-1},
\end{equation*}
and one may easily check that $(B-A)B(B-A)^{-1} = [U_1,A]A[U_1,A]^{-1}$. Analogous relations hold when $B$ is replaced with $C$. 

Finally, $X$ is invertible if and only if $X(C-A)^{-1}$ is invertible. But, in light of the previous paragraph,
\begin{align*}
X(C-A)^{-1} &= (C^2-A^2)(C-A)^{-1}-(B^2-A^2)(B-A)^{-1}\\
&= [U_2,A]A[U_2,A]^{-1} - [U_1,A]A[U_1,A]^{-1},
\end{align*}
which completes the proof.
\end{proof}

Condition (iii) of Lemma \ref{lem:inv vandy lemma} shows that the invertibility of the Vandermonde matrix $\mcV(A,B,C)$ can be determined by examining differences of certain elements of $\mcC(m)$. 

\begin{Def}\label{def:invcA}
A subset $S \subseteq \GL(3,F)$ is said to have the \textit{invertible difference property} (or is said to be an \textit{IDP set}) if $A - B \in \GL(3,F)$ for all $A, B \in S$ such that $A \ne B$. For $A \in \mcC(m)$, we define the following sets of matrices:
\begin{align*}
\mcD_A &:=\{B \in \mcC(m) : B-A \text{ is invertible}\},\\
\mcU_A &:=\{U \in \GL(3,F) : [U,A] \text{ is invertible}\},\\
\mcB_A &:=\{[U,A] : U \in \mcU_A\}, \text{ and}\\
\mcS_A &:=\{[U,A]A[U,A]^{-1} : [U,A] \in \mcB_A\}.
\end{align*}
\end{Def}

We now proceed to study the sets introduced in Definition \ref{def:invcA}, with the goal of showing that $\mcS_A$ is an IDP set for all $A \in \mcC(m)$. 

\begin{Lem}\label{lem:conjIDP} Let $\mathcal{T} \subseteq \GL(3,F)$, let $U \in \GL(3,F)$, and let $A \in \mcC(m)$. 
\begin{enumerate}[(1)]
\item $\mathcal{T}$ is an IDP set if and only if $U\mathcal{T}$ is an IDP set, if and only if $\mathcal{T}U$ is an IDP set.
\item We have $U(\mcD_A)U^{-1} = \mcD_{UAU^{-1}}$, and likewise for $\mcU_A$, $\mcB_A$, and $\mcS_A$. 
\item For all $A, B \in \mcC(m)$, $\mcS_A$ is an IDP set if and only if $\mcS_B$ is an IDP set.
\end{enumerate}
\end{Lem}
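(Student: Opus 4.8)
The plan is to prove the three parts in order, since each is a short structural observation and parts (2) and (3) will lean on part (1).

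For part (1), I would argue directly from the definition of the IDP property. The key observation is that left multiplication by a fixed invertible $U$ is a bijection on $\GL(3,F)$ that respects invertibility of differences: given distinct $A, B \in \mathcal{T}$, the elements $UA, UB$ are distinct, and $UA - UB = U(A-B)$, which is invertible if and only if $A - B$ is invertible (since $U$ is invertible). Conversely, every element of $U\mathcal{T}$ has the form $UA$, so the correspondence $A \leftrightarrow UA$ is a bijection between the pairwise differences of $\mathcal{T}$ (up to the factor $U$) and those of $U\mathcal{T}$. Hence $\mathcal{T}$ is IDP if and only if $U\mathcal{T}$ is. The argument for right multiplication is identical, using $AU - BU = (A-B)U$.

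For part (2), I would unwind each definition and track how conjugation by $U$ interacts with it. Write $A' = UAU^{-1}$. For $\mcD_A$: if $B \in \mcD_A$, then $UBU^{-1} - A' = U(B-A)U^{-1}$ is invertible and $UBU^{-1} \in \mcC(m)$ (conjugation preserves the minimal polynomial), so $UBU^{-1} \in \mcD_{A'}$; the reverse inclusion follows by conjugating by $U^{-1}$. For $\mcU_A$: the commutator satisfies $[UVU^{-1}, A'] = U[V,A]U^{-1}$, so $[V,A]$ is invertible if and only if $[UVU^{-1}, A']$ is, giving $U\mcU_A U^{-1} = \mcU_{A'}$. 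The statement for $\mcB_A$ then follows because $\mcB_A = \{[V,A] : V \in \mcU_A\}$ and the commutator identity above shows $U \mcB_A U^{-1} = \mcB_{A'}$. Finally, for $\mcS_A$, each element has the shape $[V,A]A[V,A]^{-1}$; conjugating by $U$ turns it into $(U[V,A]U^{-1})(UAU^{-1})(U[V,A]U^{-1})^{-1} = [UVU^{-1}, A'] A' [UVU^{-1}, A']^{-1}$, which is exactly a generic element of $\mcS_{A'}$, so $U\mcS_A U^{-1} = \mcS_{A'}$.

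For part (3), I would combine the previous two parts. Since $A$ and $B$ lie in the same similarity class $\mcC(m)$, there is an invertible $U$ with $B = UAU^{-1}$. By part (2), $\mcS_B = \mcS_{UAU^{-1}} = U \mcS_A U^{-1}$. By part (1) (applied twice, once for left and once for right multiplication, or directly noting that conjugation is a composite of a left and a right multiplication), $\mcS_A$ is IDP if and only if $U\mcS_A U^{-1}$ is IDP. Therefore $\mcS_A$ is IDP if and only if $\mcS_B$ is. I do not anticipate a genuine obstacle here; the only point requiring care is the bookkeeping in part (2), especially confirming that conjugation sends a generic element of $\mcS_A$ to a \emph{generic} element of $\mcS_{A'}$ (i.e.\ that the map is surjective onto $\mcS_{A'}$, not merely into it), which is handled by running the same computation with $U^{-1}$ in place of $U$.
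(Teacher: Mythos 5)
Your proposal is correct and follows exactly the route the paper takes: the paper declares parts (1) and (2) ``straightforward to verify'' and deduces (3) from them, and your write-up simply supplies the routine details (the identities $UA-UB=U(A-B)$, $[UVU^{-1},UAU^{-1}]=U[V,A]U^{-1}$, and the fact that $\mcC(m)$ is a single similarity class so that $B=UAU^{-1}$ for some $U$). All the verifications, including the surjectivity bookkeeping in part (2), are accurate.
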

\begin{proof}
Parts (1) and (2) are straightforward to verify, and (3) follows from (1) and (2).
\end{proof}

Over a finite field, we can determine the cardinality of each set in Definition \ref{def:invcA}.

\begin{Prop}\label{prop:counting}
Assume that $F = \F_q$ and let $A \in \mcC(m)$.
\begin{enumerate}[(1)]
\item $|\mcD_A| = (q^3-q^2-q)(q^3-q^2-q-1)$.
\item $|\mcU_A| = (q^3-1)(q^3-q^2-q)(q^3-q^2-q-1)$.
\item $|\mcB_A| = (q^3-1)(q^3-q^2-q)$.
\item $|\mcS_A| = q^3-q^2-q$.
\end{enumerate}
\end{Prop}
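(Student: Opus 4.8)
The plan is to establish (1) by a direct count and then obtain (2), (3), (4) by viewing the relevant maps as fibrations whose fibers are cosets of the field $\F_q[A] \cong \F_{q^3}$ or of its unit group $\F_q[A]^\times$, the latter being the centralizer of $A$ in $\GL(3,q)$.

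For (1), I would count the complementary set of $B \in \mcC(m)$ with $B - A$ singular. By Corollary \ref{cor:inv diff}, $B - A$ is singular exactly when $vB = vA$ for some nonzero $v$, i.e.\ when $B \in E_{A,v}$. By Lemma \ref{lem:Eav intersection} the distinct sets $E_{A,v}$ correspond to the $q^2 + q + 1$ one-dimensional subspaces of $\F_q^3$ and meet pairwise only in $\{A\}$, while $|E_{A,v}| = q^3 - q^2$ by Lemma \ref{lem:E_A,v}. Hence the singular-difference set has size $1 + (q^2+q+1)(q^3-q^2-1)$, and subtracting this from $|\mcC(m)| = (q^3-q)(q^3-q^2)$ (see \eqref{eq:Cmsize}) gives, after simplification, $(q^3-q^2-q)(q^3-q^2-q-1)$.

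For (2) and (4) I would use conjugation. The map $U \mapsto UAU^{-1}$ carries $\mcU_A$ onto $\mcD_A$: by \eqref{sim and comm}, $[U,A]$ is invertible iff $UAU^{-1} - A$ is, i.e.\ iff $UAU^{-1} \in \mcD_A$, and every element of $\mcD_A$ is a conjugate of $A$. Since $U_1AU_1^{-1} = U_2AU_2^{-1}$ iff $U_2^{-1}U_1$ centralizes $A$, the fibers are left cosets of the centralizer of $A$ in $\GL(3,q)$, each of size $q^3 - 1$; thus $|\mcU_A| = (q^3-1)|\mcD_A|$, giving (2). Similarly $K \mapsto KAK^{-1}$ maps $\mcB_A$ onto $\mcS_A$ with fibers equal to the intersections of $\mcB_A$ with left cosets of the centralizer. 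Here the key observation is that $\mcB_A$ is closed under right multiplication by $\F_q[A]^\times$: if $K = [U,A]$ and $c \in \F_q[A]^\times$, then $Kc = [U,A]c = [Uc,A]$ because $c$ commutes with $A$, and $Uc \in \mcU_A$, so $Kc \in \mcB_A$. Each fiber is therefore a full coset of size $q^3-1$, giving $|\mcS_A| = |\mcB_A|/(q^3-1)$ and reducing (4) to (3).

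The heart of the matter is (3), which I would obtain by computing the fibers of $U \mapsto [U,A]$ from $\mcU_A$ to $\mcB_A$. Because $[U,A]$ depends only on $U$ modulo the centralizer $\F_q[A]$ of $A$ in $M_3(\F_q)$, and $[U_0+Z,A] = [U_0,A]$ stays invertible for $Z \in \F_q[A]$, the fiber over $[U_0,A]$ is $\{U_0 + Z : Z \in \F_q[A],\ U_0+Z \in \GL(3,q)\}$. The main obstacle is the structural claim that any $U$ with $[U,A]$ invertible satisfies $\dim \mfn(U) \le 1$. I would prove this by noting that for nonzero $v \in \mfn(U)$ one has $v[U,A] = (vU)A - (vA)U = -(vA)U$, which is nonzero since $[U,A]$ is invertible; hence $vA \notin \mfn(U)$, so $\mfn(U)A \cap \mfn(U) = \{0\}$, and since $A$ is invertible a dimension count in $\F_q^3$ forces $\dim \mfn(U) \le 1$. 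Granting this, every singular matrix $U_0 + Z$ in a fiber has a one-dimensional left nullspace. Counting pairs $(Z,v)$ with $v \ne 0$ and $v(U_0+Z) = 0$, I would use that $\F_q^3$ is a free rank-one $\F_q[A]$-module to see that each nonzero $v$ determines a unique $Z \in \F_q[A]$ with $vZ = -vU_0$; this yields $q^3-1$ pairs, of which each singular $Z$ contributes exactly $q-1$. Thus there are $(q^3-1)/(q-1) = q^2+q+1$ singular values of $Z$, every fiber has size $q^3 - q^2 - q - 1$, and $|\mcB_A| = |\mcU_A|/(q^3-q^2-q-1) = (q^3-1)(q^3-q^2-q)$.
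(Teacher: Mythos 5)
Your proposal is correct, and its overall architecture matches the paper's proof exactly: part (1) by counting $\mcC(m)\setminus\mcD_A$ as the union of the sets $E_{A,v}$ over the $q^2+q+1$ lines of $\F_q^3$; parts (2) and (4) by fibering over $\mcD_A$ and $\mcS_A$ with fibers of size $q^3-1$ coming from the centralizer $\F_q[A]^\times$; and part (3) by showing that each fiber of $U \mapsto [U,A]$ is a coset of $\F_q[A]$ from which exactly $q^2+q+1$ elements are excluded. The one genuine difference is in how you establish that count of $q^2+q+1$ in part (3). The paper rules out rank-one singular matrices $U+\alpha$ by citing Guralnick--Lanski (a rank-one matrix has no invertible commutator with anything), and then builds an explicit bijection $\alpha \mapsto \mfn(U+\alpha)$ onto the set of one-dimensional subspaces, checking well-definedness, injectivity, and surjectivity. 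You instead prove the rank statement directly from the identity $v[U,A] = -(vA)U$ for $v \in \mfn(U)$, which forces $\mfn(U)A \cap \mfn(U) = \{0\}$ and hence $\dim\mfn(U) \le 1$, and you replace the bijection by a double count of pairs $(Z,v)$ with $v(U_0+Z)=0$, using that $\F_q^3$ is a one-dimensional $\F_q[A]$-vector space to get exactly $q^3-1$ such pairs, each singular $Z$ accounting for $q-1$ of them. Your version is self-contained where the paper's is not, and the double count is arguably slicker than verifying a bijection; the paper's bijection has the small advantage of exhibiting the excluded set $\wGU$ explicitly as parametrized by the lines of $\F_q^3$, which echoes the structure used in part (1).
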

\begin{proof} Throughout, let $G = \GL(3,q)$ and let $\mcF = \F_q[A]$, which is a field of order $q^3$.

(1) First, we will count the number of matrices $B \in \mcC(m)$ that are not in $\mcD_A$. Clearly, $B \in \mcC(m) \setminus \mcD_A$ if and only if there exists a nonzero vector $v$ such that $v(B-A)=0$, if and only if $B \in E_{A,v}$. By Lemma \ref{lem:Eav intersection}, there is a distinct set $E_{A,v}$ for each linear subspace of $\F_q^3$, of which there are $q^2+q+1$. Moreover, when $v$ and $w$ are not scalar multiples, we have $E_{A,v} \cap E_{A,w} = \{A\}$. Thus, $\mcC(m) \setminus \mcD_A = \bigcup_{v \ne 0} E_{A,v}$. By Lemma \ref{lem:E_A,v}, $|E_{A,v}\setminus\{A\}|=q^3-q^2-1$, so
\begin{equation*}
|\mcC(m) \setminus \mcD_A| = (q^2+q+1)(q^3-q^2-1)+1.
\end{equation*}
Next, since $|\mcC(m)| = (q^3-q^2)(q^3-q)$, we have
\begin{align*}
|\mcD_A| &= (q^3-q^2)(q^3-q) - \big((q^2+q+1)(q^3-q^2-1)+1\big)\\
&= q^6-2q^5-q^4+q^3+2q^2+q\\
&= (q^3-q^2-q)(q^3-q^2-q-1),
\end{align*}
as desired.

(2) Let $\msC$ be the centralizer of $A$ in $G$. Since $m$ is irreducible, $\msC$ is equal to the unit group of $\mcF$ and has order $q^3-1$. For matrices $U, V \in G$, it is clear that $UAU^{-1}=VAV^{-1}$ if and only if $V\in U\msC$. So, given $B \in \mcC(m)$, the number of $V \in G$ such that $B=VAV^{-1}$ is equal to $|\msC|=q^3-1$.

Now, \eqref{sim and comm} holds for any $U \in G$, so 
\begin{align*}
|\mcU_A| &= \big|\{U \in G : UAU^{-1}-A \in G\}\big|\\
&= \big|\{U \in G : UAU^{-1} \in \mcD_A\}\big|\\
&= \big|\mcD_A\big| \cdot (q^3-1)\\
&= (q^3-q^2-q)(q^3-q^2-q-1)(q^3-1).
\end{align*}

(3) For each $U \in \mcU_A$, let $n_U$ be the number of matrices $V \in \mcU_A$ such that $[U,A]=[V,A]$. If $n_U$ is the same for all $U \in \mcU_A$, then $|\mcB_A| = |\mcU_A|/n_u$. We will show that $n_U = q^3-q^2-q-1$ for every $U \in \mcU_A$.

Given $X,Y \in M_3(\F_q)$, we have $[X,A] = [Y,A]$ if and only if $X-Y \in \msC \cup \{0\} = \mcF$, and this holds if and only if $X = Y + \alpha$ for some $\alpha \in \mcF$. For each $U \in \mcU_A$, define
\begin{equation*}
\mcG_U := \{\alpha \in \mcF : U+\alpha \in G\} \quad \text{and} \quad \wGU := \{\alpha \in \mcF : U+\alpha \notin G\}.
\end{equation*}
Then, $n_U = |\mcG_U| = q^3-|\wGU|$. We will show that $|\wGU|=q^2+q+1$, which is the number of linear subspaces of $\F_q^3$. To see this, let $\mcL$ be the set of all linear subspaces of $\F_q$. Define $L: \wGU \to \mcL$ by $L(\alpha) = \mfn(U+\alpha)$. We will show that $L$ is a bijection.

First, we will check that $L$ is well-defined. Let $\alpha \in \wGU$ and consider the rank of $U+\alpha$. By \cite[Thm.\ 4]{GurLan}, if the rank of $U+\alpha$ is 1, then $[U+\alpha, X]$ is singular for all $X \in M_3(\F_q)$. Since $[U+\alpha,A] = [U,A]$ is invertible, the rank of $U+\alpha$ is not 1. Because $U+\alpha$ is a nonzero $3 \times 3$ singular matrix, the rank of $U+\alpha$ must be 2. Thus, $\dim(\mfn(U+\alpha))=1$, and $L: \wGU \to \mcL$ is well-defined.

Next, for injectivity, let $\alpha_1, \alpha_2 \in \wGU$ be such that $L(\alpha_1)=L(\alpha_2)$. Then, $\mfn(U+\alpha_1)=\mfn(U+\alpha_2)$, so there exists a nonzero vector $v \in \F_q^3$ such that $v(U+\alpha_1)=0=v(U+\alpha_2)$. It follows that $v(\alpha_1-\alpha_2)=0$; but, $\alpha_1 - \alpha_2 \in \mcF$, so this implies that $\alpha_1=\alpha_2$.

Finally, let $W \in \mcL$, and assume $W$ is spanned by $w$. Since $\mcF$ acts transitively on $\F_q^3$, there exists an invertible matrix $\beta \in \mcF$ such that $wU=(-w)\beta$. Hence, $U+\beta \notin G$ and $w \in \mfn(U+\beta)$. As noted above, $\dim(\mfn(U+\beta))=1$, so $L(\beta)=\mfn(U+\beta)=W$. It follows that $|\wGU|=|\mcL|=q^2+q+1$, and we conclude that $n_U = q^3-q^2-q-1$.

(4) As noted in the proof of (2), two conjugates $[U_1,A]A[U_1,A]^{-1}$ and $[U_2,A]A[U_2,A]^{-1}$ are equal if and only if $[U_2,A]=[U_1,A]\alpha$ for some $\alpha \in \msC$. Since $\alpha$ commutes with $A$, we have
\begin{equation*}
[U_1,A]\alpha = U_1 A \alpha - A U_1 \alpha = U_1\alpha A - AU_1\alpha = [U_1 \alpha, A].
\end{equation*}
Thus, for each $[U,A] \in \mcB_A$, there are exactly $|\msC|=q^3-1$ elements of $\mcB_A$ that produce the same matrix in $\mcS_A$. In other words, $|\mcS_A| = |\mcB_A|/(q^3-1)=q^3-q^2-q$.
\end{proof}

From here, we will assume that $m$ is a separable polynomial. Note that since $\deg m=3$, in order for $m$ to be inseparable it is necessary that $F$ is an infinite field of characteristic 3; in all other cases, $m$ will be separable.

\begin{Not}\label{not:separable}
Let $\K$ be the splitting field of $m$ over $F$, and let $\alpha$ be a root of $m$ in $K$. Assuming that $m$ is a separable polynomial, let $\sigma \in \Gal(\K/F)$ be such that $\sigma$ cyclically permutes the roots of $m$. Then, the three roots of $m$ are $\alpha$, $\sigma(\alpha)$, and $\sigma^2(\alpha)$. (We remark that when $F = \F_q$, we may choose $\sigma$ to be the Frobenius automorphism, so that $m$ has roots $\alpha$, $\alpha^q$, and $\alpha^{q^2}$). Define
\begin{equation*}
P:= \begin{bmatrix} 1 & \alpha & \alpha^2 \\ 1 & \sigma(\alpha) & \sigma(\alpha^2) \\ 1 & \sigma^2(\alpha) & \sigma^2(\alpha^2) \end{bmatrix} \in \GL(3,\K).
\end{equation*}
Finally, given $d_1, d_2, d_3 \in \K$, let $\Diag(d_1, d_2, d_3)$ be the $3 \times 3$ matrix with diagonal entries $d_1$, $d_2$, and $d_3$, and all nondiagonal entries equal to 0. 
\end{Not}

We point out the interesting fact that the set $\mcU_A$ remains the same as $A$ runs through the nonscalar matrices that commute with $A$.

\begin{Lem}
\label{lem:UA=UB}
Assume that $m$ is separable. Let $A \in \mcC(m)$ and let $B \in F[A]$, where $B$ is not a scalar matrix.  Then, $\mcU_B = \mcU_A$.  
\end{Lem}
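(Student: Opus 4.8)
The plan is to reduce the claimed equality of sets to a pointwise statement and then diagonalize over the splitting field. Recall that $\mcU_A = \{U \in \GL(3,F) : [U,A] \text{ is invertible}\}$, so it suffices to show that for every $U \in \GL(3,F)$, the commutator $[U,A]$ is invertible if and only if $[U,B]$ is invertible. Write $B = p(A)$ for some $p \in F[x]$ of degree at most $2$; since $B$ is not scalar, $p$ is nonconstant, and because $F[A] \cong F[x]/(m)$ is a field of degree $3$ over $F$, we have $F \subsetneq F(B) \subseteq F[A]$ with $[F[A]:F]=3$ prime, forcing $F(B)=F[A]$. Hence the minimal polynomial of $B$ over $F$ is an irreducible cubic; since $m$ is separable the extension $F[A]/F$ is separable, so this cubic is separable as well.

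First I would diagonalize $A$ and $B$ simultaneously over $\K$. As $m$ is separable, $A$ is similar over $\K$ to $D_A := \Diag(\alpha, \sigma(\alpha), \sigma^2(\alpha))$, say $A = Q^{-1} D_A Q$ with $Q \in \GL(3,\K)$ (concretely, after conjugating $A$ to the companion matrix of $m$ via Lemma \ref{lem:conjIDP}, one may take $Q = P$ from Notation \ref{not:separable}). Since $B = p(A)$, the same $Q$ diagonalizes $B$: $B = Q^{-1} D_B Q$ with $D_B = \Diag(p(\alpha), p(\sigma(\alpha)), p(\sigma^2(\alpha)))$. The diagonal entries of $D_B$ are distinct: if $p(\alpha) = p(\sigma(\alpha))$, then applying $\sigma$ (which fixes the coefficients of $p$) shows all three values coincide, so $B$ would have a linear minimal polynomial and be scalar, a contradiction. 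Setting $V := Q U Q^{-1}$, conjugation by $Q$ gives $Q[U,A]Q^{-1} = [V, D_A]$ and $Q[U,B]Q^{-1} = [V, D_B]$, so $[U,A]$ (resp.\ $[U,B]$) is invertible if and only if $[V,D_A]$ (resp.\ $[V,D_B]$) is invertible.

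The heart of the argument is a determinant computation showing that this invertibility is governed by $V$ alone, independently of the eigenvalues. For any diagonal $D = \Diag(d_1,d_2,d_3)$, the matrix $[V,D]$ has $(i,j)$ entry $V_{ij}(d_j - d_i)$, hence vanishing diagonal, and expanding along the two derangements of $\{1,2,3\}$ yields
\begin{equation*}
\det [V,D] = \bigl(V_{12}V_{23}V_{31} - V_{13}V_{21}V_{32}\bigr)\,(d_2-d_1)(d_3-d_2)(d_1-d_3).
\end{equation*}
Because the diagonal entries of $D_A$ are distinct and likewise for $D_B$, both Vandermonde-type products $(d_2-d_1)(d_3-d_2)(d_1-d_3)$ are nonzero, so each of $[V,D_A]$ and $[V,D_B]$ is invertible precisely when $V_{12}V_{23}V_{31} \ne V_{13}V_{21}V_{32}$. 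This single condition on $V$ is common to both, giving $[U,A]$ invertible $\iff [U,B]$ invertible, and therefore $\mcU_A = \mcU_B$. The main obstacle is the bookkeeping ensuring $D_B$ has distinct diagonal entries in the degree-$2$ case for $p$ (handled above by the Galois-conjugacy argument) and verifying the determinant identity cleanly; once these are in place, the equivalence is immediate.
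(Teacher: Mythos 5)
Your proposal is correct and follows essentially the same route as the paper: simultaneous diagonalization of $A$ and $B$ over $\K$, followed by the observation that $\det[V,D] = (V_{12}V_{23}V_{31} - V_{13}V_{21}V_{32})(d_2-d_1)(d_3-d_2)(d_1-d_3)$, so invertibility of the commutator depends only on $V$ once the diagonal entries are distinct. Your extra care in verifying that $D_B$ has distinct diagonal entries (via the Galois-conjugate argument) fills in a detail the paper states without proof, but the argument is the same.
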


\begin{proof}
Since $m$ is separable, it has distinct roots and splits in $\K$. Consequently, there exists $Q \in \GL(3, \K)$ such that $D := QAQ^{-1}$ is a diagonal matrix, and no two diagonal entries of $D$ are equal.  Moreover, since $B \in F[A]$, $A$ and $B$ commute, which means $A$ and $B$ are simultaneously diagonalizable, i.e., we may assume $D' := QBQ^{-1}$ is also a diagonal matrix with distinct diagonal entries.
 
Let $U \in \GL(3,F)$.  Then, $[U,A]$ is invertible if and only if $[X,D]$ is invertible, where $X := QUQ^{-1}$ (and, similarly, $[U,B]$ is invertible if and only if $[X,D']$ is invertible).  If $X$ has entries $x_{ij}$ for $1 \le i, j \le 3$ and $D:= \Diag(d_1, d_2, d_3)$, then
\begin{equation}\label{eq:diagcomm}
[X,D] = \begin{bmatrix}
             0                 & (d_2 - d_1)x_{12} & (d_3 - d_1)x_{13} \\
             (d_1 - d_2)x_{21} & 0                 & (d_3 - d_2)x_{23} \\
             (d_1 - d_3)x_{31} & (d_2 - d_3)x_{32} & 0 \\
            \end{bmatrix}.
\end{equation}
In particular, $\det [X,D] = (d_2 - d_1)(d_3 - d_2)(d_1 - d_3)(x_{12}x_{23}x_{31} - x_{13}x_{21}x_{32})$, and so $[X,D]$ is invertible if and only if $x_{12}x_{23}x_{31} - x_{13}x_{21}x_{32} \neq 0$.  A similar calculation shows that $[X,D']$ is invertible if and only if $x_{12}x_{23}x_{31} - x_{13}x_{21}x_{32} \neq 0$, and, therefore, $[U,A]$ is invertible if and only if $[U,B]$ is invertible.  The result follows.
\end{proof}

Returning to the question of whether $\mcS_A$ is an IDP set, by Lemma \ref{lem:conjIDP}(3) it suffices to consider the case where $A$ is the companion matrix of $m$, that is, where 
\[A=\left[\begin{matrix}0&0&-c\\1&0&-b\\0&1&-a\end{matrix}\right].\] Following Notation \ref{not:separable}, assume that $m$ is separable and has roots $\alpha$, $\sigma(\alpha)$, and $\sigma^2(\alpha)$. Then, the rows of the matrix $P$ are eigenvectors of $A$, and $PAP^{-1} = \Diag(\alpha, \sigma(\alpha), \sigma^2(\alpha))$. Note that since $P$ is a Vandermonde matrix, its determinant is $\det P = (\sigma(\alpha)-\alpha)(\sigma^2(\alpha)-\alpha)(\sigma^2(\alpha)-\sigma(\alpha))$. Since this product is stable under $\sigma$, $\det P \in F$.

\begin{Lem}\label{lem:Xentries}
Assume that $m$ is separable. Let $U \in M_3(F)$, let $X=PUP^{-1}$, and assume that $X$ has entries $x_{ij}$ for $1 \le i, j \le 3$. Then, $x_{22}=\sigma(x_{11})$, $x_{33} = \sigma^2(x_{11})$, $x_{23}=\sigma(x_{12})$, $x_{31}=\sigma^2(x_{12})$, $x_{21}=\sigma(x_{13})$, and $x_{32}=\sigma^2(x_{13})$.
\end{Lem}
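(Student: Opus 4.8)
The plan is to exploit the Galois automorphism $\sigma$ directly, applying it entrywise to the defining relation $X = PUP^{-1}$. The crucial point is that $U$ has entries in the base field $F$, hence is fixed under the entrywise action of $\sigma$ on $M_3(\K)$; since that action is nothing but $M_3$ applied to the field automorphism $\sigma \colon \K \to \K$, it is a ring automorphism and so commutes with products and inverses. Applying it to $X = PUP^{-1}$ therefore yields $\sigma(X) = \sigma(P)\,\sigma(U)\,\sigma(P)^{-1} = \sigma(P)\,U\,\sigma(P)^{-1}$.

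First I would record how $\sigma$ acts on $P$. Because $\sigma$ cyclically permutes the roots, $\sigma^3(\alpha) = \alpha$, and the entrywise action sends the $i$-th row $(1, \sigma^{i-1}(\alpha), \sigma^{i-1}(\alpha^2))$ of $P$ to $(1, \sigma^{i}(\alpha), \sigma^{i}(\alpha^2))$, i.e.\ to the next row of $P$ read cyclically (with the third row wrapping back to the first, since $\sigma^3(\alpha)=\alpha$). Thus $\sigma(P) = \Pi P$, where $\Pi$ is the permutation matrix of the $3$-cycle $\pi = (1\,2\,3)$, normalized so that $\Pi_{i,\pi(i)} = 1$.

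Substituting this into the previous relation gives $\sigma(X) = (\Pi P)\,U\,(\Pi P)^{-1} = \Pi\,X\,\Pi^{-1}$. Conjugation by a permutation matrix acts on entries by $(\Pi X \Pi^{-1})_{ij} = x_{\pi(i)\pi(j)}$, so comparing the $(i,j)$ entry of $\sigma(X)$ with that of $\Pi X \Pi^{-1}$ yields the uniform identity $\sigma(x_{ij}) = x_{\pi(i)\pi(j)}$ for all $i,j$. Reading off the relevant entries---$\sigma(x_{11}) = x_{22}$, $\sigma(x_{22}) = x_{33}$, $\sigma(x_{12}) = x_{23}$, $\sigma(x_{23}) = x_{31}$, $\sigma(x_{13}) = x_{21}$, $\sigma(x_{21}) = x_{32}$---and iterating $\sigma$ where a second step is needed (so that $x_{33} = \sigma^2(x_{11})$, $x_{31} = \sigma^2(x_{12})$, and $x_{32} = \sigma^2(x_{13})$) produces exactly the six claimed relations.

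The only real obstacle is bookkeeping: one must fix the convention for $\Pi$ and verify that the induced permutation of entries is $(i,j) \mapsto (\pi(i),\pi(j))$ for the same $\pi$ rather than its inverse, which amounts to pinning down the direction of the cyclic shift on the rows of $P$. Once that is settled, all six identities fall out simultaneously from the single equation $\sigma(X) = \Pi X \Pi^{-1}$, with no case analysis required.
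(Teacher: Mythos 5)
Your proof is correct, and it takes a genuinely different route from the paper's. The paper proves the lemma by brute force: it writes $(\det P)X = PU\,\mathrm{adj}(P)$, computes the relevant entries explicitly as polynomials in the $u_{ij}$ and the roots $\alpha$, $\sigma(\alpha)$, $\sigma^2(\alpha)$, and then observes by inspection that applying $\sigma$ to the formula for $x_{12}$ produces the formula for $x_{23}$, and so on, leaving the remaining cases to the reader. You instead extract the structural reason this works: entrywise application of $\sigma$ is a ring automorphism of $M_3(\K)$ fixing $U$, and $\sigma(P)=\Pi P$ for the cyclic permutation matrix $\Pi$, whence $\sigma(X)=\Pi X\Pi^{-1}$ and $\sigma(x_{ij})=x_{\pi(i)\pi(j)}$ uniformly. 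Your bookkeeping checks out: with $\Pi_{i,\pi(i)}=1$ one has $(\Pi P)_{ij}=P_{\pi(i),j}$, so $\sigma(P)=\Pi P$ forces $\pi(i)=i+1 \bmod 3$, and $(\Pi X\Pi^{-1})_{ij}=x_{\pi(i)\pi(j)}$ for that same $\pi$; the six claimed identities then follow at once (using $\sigma^3=\mathrm{id}$ on $\K$, which holds since $\K$ is generated over $F$ by the roots that $\sigma$ permutes cyclically). What your approach buys is the elimination of all entry computations and of the case-by-case verification, plus a slightly stronger and more memorable statement ($X$ commutes with the twisted $\sigma$-action up to the fixed permutation); what the paper's computation buys is the explicit formulas for the entries of $PU\,\mathrm{adj}(P)$, though these are not reused elsewhere, so nothing downstream is lost by your argument.
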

\begin{proof}
We summarize the results necessary to verify that $x_{23}=\sigma(x_{12})$ and $x_{31}=\sigma^2(x_{12})$. The other cases are similar and are left to the reader. For readability, let $\beta = \sigma(\alpha)$ and $\gamma=\sigma(\beta)=\sigma^2(\alpha)$. Note that each element of $F$ is fixed by $\sigma$, and $\sigma^3$ is the identity function on $\K$.

Now, as noted above, $\det P \in F$, so it suffices to show that the desired relations hold for $(\det P)X$. Equivalently, we can consider $PU(\text{adj}(P))$, where $\text{adj}(P)$ is the adjugate matrix of $P$. One may compute that
\begin{equation*}
\text{adj}(P) = 
\begin{bmatrix}
\beta\gamma(\gamma-\beta) & \alpha\gamma(\alpha-\gamma) & \alpha\beta(\beta-\alpha)\\
\beta^2-\gamma^2 & \gamma^2-\alpha^2 & \alpha^2-\beta^2\\
\gamma-\beta & \alpha-\gamma & \beta-\alpha
\end{bmatrix}.
\end{equation*}

Next, let $U$ have entries $u_{ij}$ for $1 \le i, j \le k$. Then, the $(1,2)$-entry of $(\det P)X$ is
\begin{align*}
x_{12} =  \alpha \gamma (\alpha&-\gamma) u_{11} + (\gamma^2-\alpha^2) u_{12} + (\alpha-\gamma) u_{13}\\ 
&+ \alpha^2 \gamma (\alpha-\gamma) u_{21} + \alpha (\gamma^2-\alpha^2) u_{22} + \alpha (\alpha-\gamma) u_{23}\\ 
&+ \alpha^3 \gamma (\alpha-\gamma) u_{31} + \alpha^2 (\gamma^2-\alpha^2) u_{32} + \alpha^2 (\alpha-\gamma) u_{33},
\end{align*}
while the $(2,3)$-entry is 
\begin{align*}
x_{23} = \alpha \beta (\beta&-\alpha) u_{11} + (\alpha^2-\beta^2) u_{12} + (\beta-\alpha) u_{13}\\ 
&+ \alpha \beta^2 (\beta-\alpha) u_{21} + \beta (\alpha^2-\beta^2) u_{22} + \beta (\beta-\alpha) u_{23}\\ 
&+ \alpha \beta^3 (\beta-\alpha) u_{31} + \beta^2 (\alpha^2-\beta^2) u_{32} + \beta^2 (\beta-\alpha) u_{33},
\end{align*}
and the $(3,1)$-entry is
\begin{align*}
x_{31} = \beta \gamma (\gamma&-\beta) u_{11} + (\beta^2-\gamma^2) u_{12} + (\gamma-\beta) u_{13}\\ 
&+ \beta \gamma^2 (\gamma-\beta) u_{21} + \gamma (\beta^2-\gamma^2) u_{22} + \gamma (\gamma-\beta) u_{23}\\ 
&+ \beta \gamma^3 (\gamma-\beta) u_{31} + \gamma^2 (\beta^2-\gamma^2) u_{32} + \gamma^2 (\gamma-\beta) u_{33}.
\end{align*}
Since $\sigma$ cyclically permutes $\alpha$, $\beta$, and $\gamma$, it is now clear that $x_{23}=\sigma(x_{12})$ and $x_{31}=\sigma^2(x_{12})$. The remaining relations can be checked by performing similar calculations.
\end{proof}

\begin{Lem}\label{lem:canonical form}
Assume that $m$ is separable. Let $A$ be the companion matrix of $m$, let $D=PAP^{-1} \in \GL(3,\K)$, and let $Q \in \mcS_A$. Then, there exists a matrix $X \in M_3(\K)$ such that $[X,D]$ is invertible, $PQP^{-1} = [X,D]D[X,D]^{-1}$, and $X$ has one of the following forms:
\begin{enumerate}[(i)]
\item $X = \begin{bmatrix} 0 & 1 & 0 \\ 0 & 0 & 1\\ 1 & 0 & 0 \end{bmatrix}$, 
\item $X = \begin{bmatrix} 0 & 0 & 1 \\ 1 & 0 & 0\\ 0 & 1 & 0 \end{bmatrix}$, or
\item $X = \begin{bmatrix} 0 & 1 & 1 \\ 1 & 0 & x\\ \sigma(x) & \sigma^2(x^{-1}) & 0 \end{bmatrix}$ where $x \in \K$ is nonzero.
\end{enumerate}
\end{Lem}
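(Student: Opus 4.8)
The plan is to conjugate everything by $P$ so that the problem reduces to a statement about the single diagonal matrix $D = \Diag(\alpha, \sigma(\alpha), \sigma^2(\alpha))$. Write $Q = [U,A]A[U,A]^{-1}$ for some $U \in \mcU_A$ and set $X_0 := PUP^{-1} \in M_3(\K)$. Since $PAP^{-1} = D$, a direct computation gives $P[U,A]P^{-1} = X_0 D - D X_0 = [X_0, D]$, whence $PQP^{-1} = [X_0,D]\,D\,[X_0,D]^{-1}$, and $[X_0,D]$ is invertible because $[U,A]$ is. By Lemma \ref{lem:Xentries} the off-diagonal entries of $X_0$ are completely determined by the two entries $x_{12}$ and $x_{13}$ through $x_{23}=\sigma(x_{12})$, $x_{31}=\sigma^2(x_{12})$, $x_{21}=\sigma(x_{13})$, and $x_{32}=\sigma^2(x_{13})$. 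By the determinant computation accompanying \eqref{eq:diagcomm}, invertibility of $[X_0,D]$ amounts to $x_{12}x_{23}x_{31} \ne x_{13}x_{21}x_{32}$, so in particular $x_{12}$ and $x_{13}$ are not both zero.

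The second ingredient is to identify which modifications of $X_0$ leave the conjugate $PQP^{-1}$ unchanged. Because \eqref{eq:diagcomm} shows that $[X,D]$ depends only on the off-diagonal entries of $X$, I may freely set the diagonal of $X_0$ to $0$. More importantly, for any invertible diagonal $\Lambda$ one has $[X_0\Lambda, D] = [X_0,D]\Lambda$ (as $\Lambda$ commutes with $D$), and therefore $[X_0\Lambda,D]\,D\,[X_0\Lambda,D]^{-1} = [X_0,D]\,D\,[X_0,D]^{-1} = PQP^{-1}$. Thus right multiplication by an invertible diagonal matrix, i.e.\ independent rescaling of the columns of $X_0$, preserves both the target conjugate and (since $\Lambda$ is invertible) the invertibility of the commutator. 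The goal is then to choose $\Lambda$ so that $X := X_0\Lambda$, with zeroed diagonal, lands in one of the three listed shapes.

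I would finish with a case analysis on the vanishing of $x_{12}$ and $x_{13}$. If $x_{13}\ne 0$ but $x_{12}=0$, then $x_{23}=x_{31}=0$ as well, so $X_0$ is supported on the positions $(1,3),(2,1),(3,2)$, and scaling those entries to $1$ produces form (ii); the symmetric case $x_{12}\ne 0$, $x_{13}=0$ produces form (i). When both $x_{12},x_{13}$ are nonzero, I take $\lambda_2=x_{12}^{-1}$, $\lambda_3=x_{13}^{-1}$, $\lambda_1=\sigma(x_{13})^{-1}$, which normalizes the $(1,2)$, $(1,3)$, and $(2,1)$ entries to $1$; the Galois relations of Lemma \ref{lem:Xentries} then force the surviving entries to be $x := \sigma(x_{12})/x_{13}$ in position $(2,3)$, $\sigma(x)$ in position $(3,1)$, and $\sigma^2(x^{-1})$ in position $(3,2)$, which is exactly form (iii), with $x\ne 0$ since $x_{12},x_{13}\ne 0$.

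The main obstacle is precisely this last bookkeeping step: one must verify that after normalizing three of the off-diagonal entries to $1$, the twisted relations $x_{23}=\sigma(x_{12})$, $x_{31}=\sigma^2(x_{12})$, $x_{32}=\sigma^2(x_{13})$ automatically yield the precise entries $\sigma(x)$ and $\sigma^2(x^{-1})$ demanded by form (iii), rather than three unrelated parameters. This collapse to a single free parameter $x$ is exactly the rigidity imposed by $X_0$ arising as $PUP^{-1}$ with $U$ defined over the base field $F$, and it is what makes the classification close up into the three stated forms.
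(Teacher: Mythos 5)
Your proposal is correct and follows essentially the same route as the paper's own proof: reduce to $X_0=PUP^{-1}$, invoke Lemma \ref{lem:Xentries} for the Galois-twisted entry pattern, kill the diagonal, and rescale columns by an invertible diagonal (your $\Lambda$ is exactly the paper's $R$, including $\lambda_1=\sigma(x_{13})^{-1}$, $\lambda_2=x_{12}^{-1}$, $\lambda_3=x_{13}^{-1}$ in the generic case, yielding $x=\sigma(x_{12})/x_{13}$). No gaps.
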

\begin{proof}
Since $Q \in \mcS_A$, we know that $Q=[U,A]A[U,A]^{-1}$ for some $U \in \GL(3,F)$ such that $[U,A]$ is invertible. Let $X_1 = PUP^{-1} \in \GL(3,\K)$. Then, $[X_1,D] = P[U,A]P^{-1}$, so $[X_1,D]$ is invertible, and
\begin{equation*}
PQP^{-1} = P([U,A]A[U,A]^{-1})P^{-1} = [X_1,D]D[X_1,D]^{-1}.
\end{equation*}
Moreover, by Lemma \ref{lem:Xentries}, we know that for some $x_{11}, x_{12}, x_{13} \in \K$,
\begin{equation*}
X_1 = \begin{bmatrix} x_{11} & x_{12} & x_{13} \\ \sigma(x_{13}) & \sigma(x_{11}) & \sigma(x_{12}) \\ \sigma^2(x_{12}) & \sigma^2(x_{13}) & \sigma^2(x_{11}) \end{bmatrix}.
\end{equation*}
Note that we cannot have $x_{12}=x_{13}=0$, because this would contradict the fact that $[X_1,D]$ is invertible (cf.\ equation \eqref{eq:diagcomm}).

Consider a diagonal matrix $Z \in M_3(\K)$ and an invertible diagonal matrix $R \in \GL(3, \K)$. Since $Z$ and $R$ commute with $D$, we have
\begin{equation*}
[X_1 - Z, D] = [X_1, D] - [Z, D] = [X_1, D]
\end{equation*}
and 
\begin{equation*}
[X_1 R, D] = (X_1 R)D - D(X_1 R) = (X_1 D - D X_1)R = [X_1,D]R.
\end{equation*}
The latter equation implies that 
\begin{equation*}
[X_1 R,D]D[X_1 R,D]^{-1} = [X_1 ,D]RDR^{-1}[X_1 ,D]^{-1} = [X_1 ,D]D[X_1 ,D]^{-1}.
\end{equation*}
Thus, the desired matrix $X$ is $X=(X_1-Z)R$, where $Z = \Diag(x_{11},\sigma(x_{11}),\sigma^2(x_{11}))$ and $R$ is chosen to appropriately scale the columns of $X_1-Z$. Specifically, if $x_{13}=0$, then take $R = \Diag(\sigma^2(x_{12}^{-1}), x_{12}^{-1}, \sigma(x_{12}^{-1}))$; and if $x_{12}=0$, then take $R=\Diag(\sigma(x_{13}^{-1}), \sigma^2(x_{13}^{-1}), x_{13}^{-1})$. These produce forms (i) and (ii), respectively. If both $x_{12}$ and $x_{13}$ are nonzero, then let $R = \Diag(\sigma(x_{13}^{-1}), x_{12}^{-1}, x_{13}^{-1})$. In this case, $X$ will be in form (iii) with $x = \sigma(x_{12})/x_{13}$.
\end{proof}

\begin{Thm}\label{thm:SAIDP}
Assume that $m$ is separable. Let $A \in \mcC(m)$. Then, $\mcS_A$ is an IDP set.
\end{Thm}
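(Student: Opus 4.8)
The plan is to reduce everything to a fixed diagonal model and then verify the invertible-difference property directly on the three canonical forms produced by Lemma \ref{lem:canonical form}. By Lemma \ref{lem:conjIDP}(3) it suffices to treat the case where $A$ is the companion matrix of $m$, and by Lemma \ref{lem:conjIDP}(1) conjugating the entire set by the fixed matrix $P$ of Notation \ref{not:separable} preserves the IDP property. So it is enough to show that the set of matrices $[X,D]D[X,D]^{-1}$ is an IDP set, where $D = PAP^{-1} = \Diag(\alpha,\beta,\gamma)$ (writing $\beta=\sigma(\alpha)$, $\gamma=\sigma^2(\alpha)$) and $X$ ranges over the forms (i), (ii), (iii) of Lemma \ref{lem:canonical form}. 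Separability guarantees that $\alpha,\beta,\gamma$ are distinct, and this is the fact I expect to use at every turn.

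The key simplification I would isolate is a clean criterion for when a difference of two such conjugates is invertible. Writing $Q_i = C_i D C_i^{-1}$ with $C_i := [X_i,D] \in \GL(3,\K)$, one has
\[
Q_1 - Q_2 = C_1\bigl(D - WDW^{-1}\bigr)C_1^{-1}, \qquad \text{where } W := C_1^{-1}C_2 .
\]
Since $D - WDW^{-1} = -[W,D]W^{-1}$, it follows that $Q_1 - Q_2$ is invertible if and only if $[W,D]$ is invertible, while $Q_1 = Q_2$ exactly when $W$ commutes with $D$, i.e.\ when $W$ is diagonal (as $D$ has distinct diagonal entries). By the determinant computation in \eqref{eq:diagcomm}, $[W,D]$ is invertible precisely when $w_{12}w_{23}w_{31}\neq w_{13}w_{21}w_{32}$. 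Thus the theorem reduces to the assertion that, for each pair of canonical forms, $W=C_1^{-1}C_2$ is \emph{either} diagonal \emph{or} satisfies that off-diagonal inequality; equivalently, one may instead compute $\det(Q_1-Q_2)$ directly and show it is nonzero whenever $Q_1 \neq Q_2$.

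With this in hand I would dispatch the cases. For forms (i) and (ii) the matrix $C=[X,D]$ is a weighted permutation (monomial) matrix with a single $3$-cycle, so $CDC^{-1}$ is a permuted diagonal; a short check gives $\Diag(\beta,\gamma,\alpha)$ and $\Diag(\gamma,\alpha,\beta)$, whose difference $\Diag(\beta-\gamma,\gamma-\alpha,\alpha-\beta)$ is invertible by distinctness of the roots. The substantive cases are those involving form (iii): comparing (iii) against (i) or (ii), and comparing two instances of (iii) with parameters $x\neq x'$. Here I would feed the explicit entries of $C$ coming from \eqref{eq:diagcomm} and the Galois-twisted shape of form (iii)---the entries $x,\sigma(x),\sigma^2(x^{-1})$ in positions $(2,3),(3,1),(3,2)$---into the criterion above, using that $\det P \in F$ and the relations of Lemma \ref{lem:Xentries} to keep the bookkeeping under control. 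The point is that the relevant triple products factor through norms and root-differences $\beta-\alpha,\gamma-\alpha,\gamma-\beta$, which are forced to be nonzero by separability.

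The main obstacle will be the form (iii) versus form (iii) comparison: there $\det(Q_1-Q_2)$ (equivalently $\Delta(W)$) is a genuinely multivariable expression in $x,x'$ and their $\sigma$-conjugates, and the crux is to show that it vanishes only when the two parameters yield the same matrix. I expect this to reduce to showing that any nonzero $v$ with $vQ_1 = vQ_2$ would force a nontrivial polynomial relation of degree below $3$ inside the field $F[A]\cong\K$, contradicting the irreducibility and separability of $m$ (much as in the proofs of Corollary \ref{cor:inv diff} and Theorem \ref{thm:singular case}). If so, the remaining difficulty is careful tracking of the $\sigma$-action rather than any new idea; the separability hypothesis is precisely what makes the diagonalization, the distinctness of roots, and these field-theoretic obstructions all available.
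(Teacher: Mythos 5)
Your overall strategy is the same as the paper's: reduce to the companion matrix via Lemma \ref{lem:conjIDP}, conjugate by $P$ to the diagonal model $D$, invoke the canonical forms of Lemma \ref{lem:canonical form}, and decide invertibility of $Q_1-Q_2$ by a determinant criterion. Your reformulation via $W=C_1^{-1}C_2$ (so that $Q_1-Q_2=-C_1[W,D]W^{-1}C_1^{-1}$, hence $Q_1-Q_2$ is invertible iff $[W,D]$ is, and $Q_1=Q_2$ iff $W$ is diagonal) is correct and tidy, and your treatment of forms (i) and (ii) as permuted diagonals $\Diag(\beta,\gamma,\alpha)$ and $\Diag(\gamma,\alpha,\beta)$ is complete and arguably more transparent than the paper's, which handles every case through one explicit formula.

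The gap is that the decisive cases --- form (iii) against (i), (ii), or another instance of (iii) --- are only asserted as an expectation, and the fallback you offer would not close them. The paper's proof rests on the explicit factorization
\begin{equation*}
\det\bigl(Q_1-Q_2\bigr)=\frac{\delta\,(x_{23}y_{13}-x_{13}y_{23})(x_{31}y_{21}-x_{21}y_{31})(x_{32}y_{12}-x_{12}y_{32})}{(x_{12}x_{23}x_{31}-x_{13}x_{21}x_{32})(y_{12}y_{23}y_{31}-y_{13}y_{21}y_{32})},
\end{equation*}
which in the (iii)-vs-(iii) case collapses to $\delta(x-y)(\sigma(x)-\sigma(y))(\sigma^2(x^{-1})-\sigma^2(y^{-1}))$ over a nonzero denominator, visibly nonzero unless $x=y$. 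That computation (or the equivalent one for your $W$) is the entire content of the theorem and must actually be carried out; your guess that ``the triple products factor through norms and root-differences'' happens to be right, but it is not verified. More importantly, your proposed substitute --- deriving from a nonzero $v$ with $vQ_1=vQ_2$ a polynomial relation of degree below $3$ in $F[A]$, in the style of Corollary \ref{cor:inv diff} --- cannot work: irreducibility of $m$ alone never forces differences of elements of $\mcC(m)$ to be invertible (the sets $E_{A,v}$ show that a large proportion of pairs in $\mcC(m)$ have singular difference, and $Q_1-Q_2$ is not of the form $\alpha(A)$ for $\alpha\in F[x]$, so no minimal-polynomial obstruction applies). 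The invertibility here is a special feature of the Galois-twisted parametrization of $\mcS_A$, and only the explicit determinant computation exposes it.
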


\begin{proof}
By Lemma \ref{lem:conjIDP}(3), we may assume without loss of generality that $A$ is the companion matrix of $m$. Let $\K$, $\alpha$, $\sigma$, and $P$ be as in Notation \ref{not:separable}. Then, $D:= PAP^{-1} = \Diag(\alpha, \sigma(\alpha), \sigma^2(\alpha))$. Let $\delta = (\sigma(\alpha)-\alpha)(\sigma^2(\alpha)-\sigma(\alpha))(\alpha-\sigma^2(\alpha))$, which is nonzero.

Let $Q_1, Q_2 \in \mcS_A$ such that $Q_1 \ne Q_2$. Then, there exist matrices $X, Y \in M_3(\K)$ such that $PQ_1P^{-1} = [X,D]D[X,D]^{-1}$, $PQ_2P^{-1} = [Y,D]D[Y,D]^{-1}$, and $X$ and $Y$ each have one of the forms specified in Lemma \ref{lem:canonical form}. Assume that $X$ has entries $x_{ij}$ $(1 \le i, j \le 3)$ and $Y$ has entries $y_{ij}$ $(1 \le i,j \le 3)$. Then, as noted after equation \eqref{eq:diagcomm},
\begin{align*}
\det[X,D] &= \delta(x_{12}x_{23}x_{31} - x_{13}x_{21}x_{32}), \text{ and }\\
\det[Y,D] &= \delta(y_{12}y_{23}y_{31} - y_{13}y_{21}y_{32}).
\end{align*}
Since both $[X,D]$ and $[Y,D]$ are invertible, neither $x_{12}x_{23}x_{31} - x_{13}x_{21}x_{32}$ nor $y_{12}y_{23}y_{31} - y_{13}y_{21}y_{32}$ is zero.

Now, the determinant of $Q_1-Q_2$ is equal to that of $[X,D]D[X,D]^{-1} - [Y,D]D[Y,D]^{-1}$. One may calculate that this latter determinant is 
\begin{equation*}
\frac{\delta(x_{23}y_{13} - x_{13}y_{23})(x_{31}y_{21}-x_{21}y_{31})(x_{32}y_{12} - x_{12}y_{32})}{(x_{12}x_{23}x_{31} - x_{13}x_{21}x_{32})(y_{12}y_{23}y_{31} - y_{13}y_{21}y_{32})}.
\end{equation*}
Assume first that $X$ has form (i). Then, $Q_1-Q_2$ is singular if and only if $y_{13}y_{21}y_{32}=0$. But, this holds if and only if $Y$ has form (i). Thus, $X=Y$ and $Q_1=Q_2$, a contradiction. Hence, $Q_1-Q_2$ is invertible. We obtain the same result whenever either of $X$ or $Y$ has form (i) or (ii). So, for the remainder of the proof we will assume that both $X$ and $Y$ have form (iii). Let $x:=x_{23}$ and $y:=y_{23}$. Then, the determinant of $Q_1-Q_2$ becomes
\begin{equation*}
\frac{\delta(x-y)(\sigma(x)-\sigma(y))(\sigma^2(x^{-1})-\sigma^2(y^{-1}))}{(x\sigma(x)-\sigma^2(x^{-1}))(y\sigma(y)-\sigma^2(y^{-1}))}.
\end{equation*}
From this, it is clear that $Q_1-Q_2$ is singular if and only if one of $x-y$, $\sigma(x)-\sigma(y)$, or $\sigma^2(x^{-1})-\sigma^2(y^{-1})$ is zero. But, each of these conditions implies that $x=y$, and hence that $Q_1=Q_2$. We conclude that $Q_1-Q_2$ is invertible, and therefore $\mcS_A$ is an IDP set.
\end{proof}

\begin{Cor}\label{cor:vandy invertible case}
Assume that $m$ is separable. Let $A, B, C \in \mcC(m)$. If $B-A$ is invertible, $C-A$ is invertible, and $(B-A)B(B-A)^{-1} \ne (C-A)C(C-A)^{-1}$, then $\mcV(A,B,C)$ is invertible.
\end{Cor}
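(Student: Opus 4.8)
The plan is to recognize that this corollary is essentially an immediate consequence of Lemma \ref{lem:inv vandy lemma} combined with Theorem \ref{thm:SAIDP}; the only real content is to observe that the conjugates appearing in the hypothesis are precisely elements of $\mcS_A$, at which point the invertible difference property does all the work. Since $B$ and $C$ lie in $\mcC(m)$, they are conjugate to $A$, so I would begin by fixing $U_1, U_2 \in \GL(3,F)$ with $B = U_1 A U_1^{-1}$ and $C = U_2 A U_2^{-1}$. The identity $B - A = [U_1,A]U_1^{-1}$ (and its analogue for $C$) shows that, because $B-A$ and $C-A$ are invertible, both $[U_1,A]$ and $[U_2,A]$ are invertible; hence $U_1, U_2 \in \mcU_A$ and $[U_1,A],[U_2,A] \in \mcB_A$.

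Next I would invoke the computation already carried out in the proof of Lemma \ref{lem:inv vandy lemma}, namely that $(B-A)B(B-A)^{-1} = [U_1,A]A[U_1,A]^{-1}$ and $(C-A)C(C-A)^{-1} = [U_2,A]A[U_2,A]^{-1}$. By the definition of $\mcS_A$, this exhibits both $(B-A)B(B-A)^{-1}$ and $(C-A)C(C-A)^{-1}$ as elements of $\mcS_A$. The hypothesis of the corollary is exactly that these two elements of $\mcS_A$ are distinct. Since $m$ is assumed separable, Theorem \ref{thm:SAIDP} applies and tells us that $\mcS_A$ is an IDP set, so the difference of any two distinct elements is invertible. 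In particular,
\begin{equation*}
[U_2,A]A[U_2,A]^{-1} - [U_1,A]A[U_1,A]^{-1} = (C-A)C(C-A)^{-1} - (B-A)B(B-A)^{-1}
\end{equation*}
is invertible, which is precisely condition (iii) of Lemma \ref{lem:inv vandy lemma}.

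Finally, I would close the argument by applying the equivalence (i) $\Leftrightarrow$ (iii) of Lemma \ref{lem:inv vandy lemma} to conclude that $\mcV(A,B,C)$ is invertible. I do not expect a genuine obstacle here: all the difficult analysis—the diagonalization over the splitting field, the normal forms for $X$ in Lemma \ref{lem:canonical form}, and the determinant computation proving the IDP property—has already been absorbed into Theorem \ref{thm:SAIDP}. The one point requiring a little care is the bookkeeping that identifies the two conjugates in the hypothesis with elements of $\mcS_A$ and matches them against condition (iii); this rests on the auxiliary identity $(B-A)B(B-A)^{-1} = [U_1,A]A[U_1,A]^{-1}$ extracted from the proof of Lemma \ref{lem:inv vandy lemma}, so I would state that identity explicitly to make the reduction transparent.
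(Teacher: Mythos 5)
Your proposal is correct and follows essentially the same route as the paper: identify $(B-A)B(B-A)^{-1}$ and $(C-A)C(C-A)^{-1}$ with $[U_1,A]A[U_1,A]^{-1}$ and $[U_2,A]A[U_2,A]^{-1}$, observe both lie in $\mcS_A$, apply Theorem \ref{thm:SAIDP} to get invertibility of their difference, and conclude via condition (iii) of Lemma \ref{lem:inv vandy lemma}. The identity $(B-A)B(B-A)^{-1}=[U_1,A]A[U_1,A]^{-1}$ that you flag as the one point needing care is indeed the same bookkeeping step the paper relies on.
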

\begin{proof}
Let $Q_B:=(B-A)B(B-A)^{-1}$ and $Q_C:=(C-A)C(C-A)^{-1}$. Then, $Q_B=[U,A]A[U,A]^{-1}$ 
for any $U \in GL(3,F)$ such that $B=UAU^{-1}$, and likewise for $Q_C$. Thus, $Q_B, Q_C \in \mcS_A$. Since $Q_B \ne Q_C$, the difference $Q_B-Q_C$ is invertible by Theorem \ref{thm:SAIDP}. Hence, $\mcV(A,B,C)$ is invertible by Lemma \ref{lem:inv vandy lemma}.
\end{proof}

\begin{Cor}\label{cor:DA core bound}
Let $S \subseteq \mcC(m)$. 
\begin{enumerate}[(1)]
\item Assume that $m$ is separable. If $S$ contains matrices $A$, $B$, and $C$ such that $B-A$ is invertible, $C-A$ is invertible, and $(B-A)B(B-A)^{-1} \ne (C-A)C(C-A)^{-1}$, then $S$ is core.
\item If $F = \F_q$, $|S| \ge q^3-q^2-q+1$, and there exists $A \in S$ such that $S \subseteq \mcD_A \cup \{A\}$, then $S$ is core.
\end{enumerate}
\end{Cor}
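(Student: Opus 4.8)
For part (1) there is essentially nothing to prove: the hypotheses on $A$, $B$, $C$ are exactly those of Corollary \ref{cor:vandy invertible case}, so that corollary gives that $\mcV(A,B,C)$ is invertible, and Lemma \ref{lem:Vandy} then yields that $S$ is core.

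All of the content is in part (2). I would first note that since $F=\F_q$ is a perfect field, the irreducible polynomial $m$ is automatically separable, so part (1) is available. The plan is to produce two matrices $B, C \in S\setminus\{A\}$ with
\[
(B-A)B(B-A)^{-1} \ne (C-A)C(C-A)^{-1},
\]
after which part (1) immediately applies (the differences $B-A$ and $C-A$ are invertible because $S\setminus\{A\}\subseteq \mcD_A$). To organize this, I would study the map $\psi\colon \mcD_A \to \mcS_A$ given by $\psi(B) = (B-A)B(B-A)^{-1}$. Writing $B = UAU^{-1}$ for some $U \in \mcU_A$, one has $B-A = [U,A]U^{-1}$, and a short cancellation gives $\psi(B) = [U,A]A[U,A]^{-1}$; this confirms that $\psi$ is a well-defined surjection onto $\mcS_A$.

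The crux is to bound the size of the fibers of $\psi$, and this is where I expect the main difficulty to lie. I would argue that every fiber $\psi^{-1}(Q)$ has exactly $q^3-q^2-q-1$ elements by combining the counts in Proposition \ref{prop:counting} with the factorization already implicit in its proof. Concretely, the conjugation map $p\colon \mcU_A \to \mcD_A$, $U\mapsto UAU^{-1}$, has fibers equal to cosets of the centralizer $\msC$ of $A$, each of size $q^3-1$, and the composite $\psi\circ p$ coincides with the map $\mcU_A \to \mcB_A \to \mcS_A$ analyzed in Proposition \ref{prop:counting}, whose fibers have size $(q^3-q^2-q-1)(q^3-1)$. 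Since each fiber of $p$ is contained in a single fiber of $\psi\circ p$, the $p$-preimage of $\psi^{-1}(Q)$ is exactly the fiber of $\psi\circ p$ over $Q$; dividing by $|\msC|=q^3-1$ shows $|\psi^{-1}(Q)| = q^3-q^2-q-1$. As a sanity check, this is consistent with $|\mcD_A| = (q^3-q^2-q)(q^3-q^2-q-1)$ and $|\mcS_A| = q^3-q^2-q$ from Proposition \ref{prop:counting}.

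Finally I would close with a pigeonhole argument that explains the numerical hypothesis. Since $S\setminus\{A\}\subseteq \mcD_A$ and $|S\setminus\{A\}| \ge q^3-q^2-q > q^3-q^2-q-1$, the set $S\setminus\{A\}$ is too large to be contained in any single fiber $\psi^{-1}(Q)$. Hence there exist $B, C \in S\setminus\{A\}$ with $\psi(B)\ne\psi(C)$, i.e.\ $(B-A)B(B-A)^{-1} \ne (C-A)C(C-A)^{-1}$, and part (1) then shows that $S$ is core.
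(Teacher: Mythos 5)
Your proposal is correct and follows essentially the same route as the paper: part (1) by combining Corollary \ref{cor:vandy invertible case} with Lemma \ref{lem:Vandy}, and part (2) by showing via the counts in Proposition \ref{prop:counting} that each value $Q \in \mcS_A$ is attained by exactly $q^3-q^2-q-1$ matrices $B \in \mcD_A$, so that $|S\setminus\{A\}| \ge q^3-q^2-q$ forces two elements with distinct images. Your explicit fiber-counting argument merely spells out what the paper leaves implicit.
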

\begin{proof}
Part (1) follows from Corollary \ref{cor:vandy invertible case} and Lemma \ref{lem:Vandy}. For (2), assume that $F=\F_q$, $|S| \ge q^3-q^2-q+1$ and that the desired matrix $A$ exists. For each $B \in \mcD_A$, let $Q_B:=[U,A]A[U,A]^{-1}$, where $U \in \GL(3,q)$ is such that $B=UAU^{-1}$. Note that $Q_B$ does not depend on the choice of $U$, since $[U,A]A[U,A]^{-1}=(B-A)B(B-A)^{-1}$. From the counts in Proposition \ref{prop:counting}, we see that for each $Q \in \mcS_A$, there exist $q^3-q^2-q-1$ matrices $B \in \mcD_A$ such that $Q_B=Q$. Now, since $|S\setminus \{A\}| \ge q^3-q^2-q$, we can find distinct matrices $B$ and $C$ in $S \setminus \{A\}$ such that $Q_B \ne Q_C$. Thus, $S$ is core by part (1).
\end{proof}

It is an open question whether the results of this section (in particular Theorem \ref{thm:SAIDP}) hold without the assumption that $m$ is separable. Regardless, we now have everything necessary to prove Theorem \ref{thm:main}. Since that theorem assumes that $F$ is finite, the separability of $m$ is not a concern.

\begin{myproof}[of Theorem \ref{thm:main}]
Assume that $|S| \ge q^3-q^2+1$, and fix $A \in S$. If $S \subseteq \mcD_A \cup \{A\}$, then $S$ is core by Corollary \ref{cor:DA core bound}. So, assume that $S \not\subseteq \mcD_A \cup \{A\}$. Then, there exists $C \in S$ such that $C-A$ is singular. If $B-A$ is singular for all $B \in S$, then $S$ is core by Corollary \ref{cor:singular case}. Otherwise, there exists $B \in S$ such that $B-A$ is invertible, and $S$ is core by Corollary \ref{cor:mixed case core}.
\end{myproof}

\section{Invertible difference graphs}\label{graph section}

We close the paper by presenting some connections between invertible difference property (IDP) sets (see Definition \ref{def:invcA}) and algebraically defined graphs; indeed, many of the results from Section \ref{invertible section} can be restated in terms of graphs.

Maintain the notation from previous sections, and assume that $F = \F_q$ and $m \in \F_q[x]$ is an irreducible cubic.  Define a graph $\Gamma_m$ whose vertex set $V(\Gamma_m)$ is $\mcC(m)$ with adjacency defined by $A \sim B$ if and only if $A - B$ is invertible.  Such algebraically defined graphs have been of great interest to researchers in recent years; see, for example, \cite{HHLS, SM}. When translated into graph theoretical statements, our results on IDP sets illustrate some notable properties of $\Gamma_m$.

\begin{Prop}
\label{prop:graphrestatement}
 Let the graph $\Gamma_m$ be defined over $\F_q$ as above.
\begin{enumerate}[(1)]
\item $|V(\Gamma_m)| = (q^3 - q)(q^3 - q^2)$.
\item $\Gamma_m$ is vertex-transitive.
\item $\Gamma_m$ is regular of degree $(q^3 - q^2 - q)(q^3 - q^2 - q - 1)$.
\item $\Gamma_m$ has a clique of size $q^3 - q^2 - q$.
\end{enumerate}
\end{Prop}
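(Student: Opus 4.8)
The plan is to assemble the four parts from facts already established, since each one reduces to a previously computed count or to a structural property of the conjugacy class $\mcC(m)$.

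For (1), the vertex set is by definition $\mcC(m)$, and its cardinality is recorded in \eqref{eq:Cmsize} as $(q^3-q)(q^3-q^2)$, so there is nothing further to prove. For (2), I would exploit the fact that $\mcC(m)$ is a single $\GL(3,q)$-conjugacy class. Given any $A, B \in \mcC(m)$, choose $U \in \GL(3,q)$ with $B = UAU^{-1}$; conjugation by $U$ permutes $\mcC(m)$ (conjugate matrices share the same minimal polynomial) and preserves adjacency, because $UXU^{-1} - UYU^{-1} = U(X-Y)U^{-1}$ is invertible exactly when $X - Y$ is. Thus conjugation by $U$ is a graph automorphism carrying $A$ to $B$, and $\Gamma_m$ is vertex-transitive.

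For (3), the neighbors of a fixed vertex $A$ are precisely those $B \in \mcC(m)$ with $B - A$ invertible, which is the set $\mcD_A$ of Definition \ref{def:invcA}. Proposition \ref{prop:counting}(1) gives $|\mcD_A| = (q^3-q^2-q)(q^3-q^2-q-1)$, and vertex-transitivity from part (2) makes this the common degree, so $\Gamma_m$ is regular of the stated degree. For (4), a clique in $\Gamma_m$ is exactly a subset of $\mcC(m)$ whose pairwise differences are all invertible, i.e.\ an IDP set contained in $\mcC(m)$. The natural candidate is $\mcS_A$ from Definition \ref{def:invcA}: every element of $\mcS_A$ is a conjugate of $A$, hence lies in $\mcC(m)$, and Theorem \ref{thm:SAIDP} asserts that $\mcS_A$ is an IDP set, so $\mcS_A$ forms a clique. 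Its size is $q^3-q^2-q$ by Proposition \ref{prop:counting}(4).

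No step presents a genuine obstacle, as the deep content has already been carried out in earlier sections. The only points requiring care are the verification that conjugation respects the edge relation (for part (2)) and the remark, needed to invoke Theorem \ref{thm:SAIDP} for part (4), that over the finite field $\F_q$ every irreducible polynomial is automatically separable, so the separability hypothesis of that theorem is satisfied.
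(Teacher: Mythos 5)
Your proof is correct and follows essentially the same route as the paper: part (1) from Equation \eqref{eq:Cmsize}, part (2) from the adjacency-preserving conjugation action of $\GL(3,q)$, part (3) from $|\mcD_A|$ in Proposition \ref{prop:counting}(1), and part (4) from Theorem \ref{thm:SAIDP} together with $|\mcS_A|$ in Proposition \ref{prop:counting}(4). Your explicit remarks that conjugation respects the edge relation and that separability is automatic over $\F_q$ are correct and slightly more careful than the paper's own wording.
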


\begin{proof}
Part (1) is a restatement of Equation \eqref{eq:Cmsize}, and (2) is true because the action of $\GL(3,q)$ by conjugation on $\mcC(m)$ preserves adjacency. For (3), each vertex $A \in \mcC(m)$ has degree $|\mcD_A|$ (see Definition \ref{def:invcA}), and $|\mcD_A| = (q^3 - q^2 - q)(q^3 - q^2 - q - 1)$ by Proposition \ref{prop:counting}(1). Finally, by Theorem \ref{thm:SAIDP} the vertices in each set $\mcS_A$ form a clique, and $|\mcS_A| = q^3 - q^2 - q$ by Proposition \ref{prop:counting}(4).
\end{proof}

We point out that for any graph $\Gamma$ whose vertex set is a subset of $\GL(3,q)$ with adjacency defined via invertible differences, the size of a clique in $\Gamma$ cannot exceed $q^3 - 1$. This is because any set of $q^3$ invertible matrices in $\GL(3,q)$ must have two with the same first row. Thus, the clique size achieved in $\Gamma_m$ is near the theoretical absolute upper bound.

Proposition \ref{prop:graphrestatement} is arguably more interesting if one considers the complement graph $\Gamma_m^c$ (that is, edges and non-edges are switched): this graph still has $(q^3 - q)(q^3 - q^2)$ vertices, but $\Gamma_m^c$ is regular of degree $q^5 - 2q^2 - q - 1$ with an independent set of size $q^3 - q^2 - q$ (corresponding to the clique of $\Gamma_m$ from Proposition \ref{prop:graphrestatement}(3)), which is likely of interest to those studying MRD codes and related objects (see, for example, \cite{Sheekey}).   It would be quite interesting both to determine further combinatorial properties of these graphs (e.g., chromatic number, Hamiltonicity, etc.), and to see whether similar phenomena occur with $n \times n$ matrices when $n > 3$.


\end{document}